\pgfplotsset{compat=1.7}
\definecolor{color1}{RGB}{68,119,170}   
\definecolor{color2}{RGB}{102,204,238}  
\definecolor{color3}{RGB}{34,136,51}    
\definecolor{color4}{RGB}{50,200,100}    
\definecolor{color5}{RGB}{204,187,68}   
\definecolor{color6}{RGB}{221,204,119}  
\definecolor{color7}{RGB}{204,102,119}  
\definecolor{color8}{RGB}{136,34,85}    
\definecolor{color9}{RGB}{170,51,119}   
\definecolor{color10}{RGB}{102,102,102} 
\definecolor{color11}{RGB}{50,50,50} 
\definecolor{graph_color}{RGB}{204,102,119}
\pgfplotsset{
/pgfplots/colormap={mycolormap}{rgb=(0.4495,0.2869,0.5711) rgb=(0.8994,0.8907,0.9426)}
}
\DeclareMathAlphabet\mathbfcal{OMS}{cmsy}{b}{n}
\newcommand\norm[1]{\left\lVert#1\right\rVert}
\newcommand{\sym}{\mathbb{S}}
\newcommand\sgn{\operatorname{sgn}}
\DeclareMathOperator*{\argmax}{argmax}
\DeclareMathOperator*{\dom}{dom}
\DeclareMathOperator*{\Id}{Id}
\DeclareMathOperator*{\Cl}{cl}
\DeclareMathOperator*{\Int}{int}
\DeclareMathOperator*{\bdry}{bdry}
\newcommand*{\R}{\mathbb{R}}
\newcommand{\Rext}{\R \cup \{\infty\}}
\def\th@plain{
  \thm@headfont{\normalfont\sffamily\bfseries}
  \itshape 
}
\def\th@definition{
  \thm@headfont{\normalfont\sffamily\bfseries}
  \thm@notefont{\normalfont\sffamily\bfseries}
}
\newtheoremstyle{myStyle1}
  {0.3cm}
  {0.3cm}
  {\itshape}
  {}
  {\normalfont\sffamily\bfseries}
  {:}
  {.5em}
  {}
\newtheoremstyle{myStyle2}
  {0.3cm}
  {0.3cm}
  {}
  {}
  {\normalfont\sffamily\bfseries}
  {:}
  {.5em}
  {}
\theoremstyle{myStyle1}
\newtheorem{thm}{Theorem}[section]
\newtheorem{prp}{Proposition}[section]
\newtheorem{cor}{Corollary}[section]
\newtheorem{lem}{Lemma}[section]
\theoremstyle{myStyle2}
\newtheorem{rem}{Remark}[section]
\newtheorem{exmp}{Example}[section]
\newcommand{\itemcref}[2]{\hyperref[#2]{\cref*{#1}\labelcref*{#2}}}
\crefname{equation}{}{}
\Crefname{equation}{}{}
\begin{document}
\title{\Large \sffamily\bfseries The Symmetry Coefficient of Positively Homogeneous Functions}

\author{Max Nilsson$^{\star}$ \and Pontus Giselsson$^{\star}$}
\date{%
    $^{\star}$Department of Automatic Control\\%
    Lund University, Lund, Sweden\\%
    \{\href{mailto:max.nilsson@control.lth.se}{max.nilsson}, \href{mailto:pontus.giselsson@control.lth.se}{pontus.giselsson}\}@control.lth.se\\[2ex]%
}
\maketitle
\begin{abstract}
The Bregman distance is a central tool in convex optimization, particularly in first-order gradient descent and proximal-based algorithms. Such methods enable optimization of functions without Lipschitz continuous gradients by leveraging the concept of relative smoothness, with respect to a reference function $h$. A key factor in determining the full range of allowed step sizes in Bregman schemes is the symmetry coefficient, $\alpha(h)$, of the reference function $h$. While some explicit values of $\alpha(h)$ have been determined for specific functions $h$, a general characterization has remained elusive. This paper explores two problems: (\textit{i}) deriving calculus rules for the symmetry coefficient and (\textit{ii}) computing $\alpha(\norm{\cdot}_2^p)$ for general $p$. We establish upper and lower bounds for the symmetry coefficient of sums of positively homogeneous Legendre functions and, under certain conditions, provide exact formulas for these sums. Furthermore, we demonstrate that $\alpha(\norm{\cdot}_2^p)$ is independent of dimension and propose an efficient algorithm for its computation. Additionally, we prove that $\alpha(\norm{\cdot}_2^p)$ asymptotically equals, and is lower bounded by, the function $1/(2p)$, offering a simpler upper bound for step sizes in Bregman schemes. Finally, we present closed-form computations for specific cases such as $p \in \{6,8,10\}$.
\end{abstract}
\section{Introduction} \label{sec::introduction}

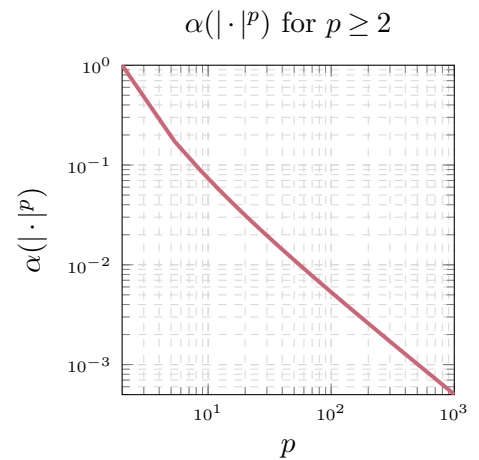
\begin{wrapfigure}{r}{0.35\linewidth}
        \begin{tikzpicture}
            \begin{loglogaxis}[
                width=\linewidth,
                height=\linewidth,
                xlabel={$p$},
                ylabel={$\alpha(|\cdot|^p)$},
                grid=both,
                grid style={dashed, gray!30},
                ticklabel style={font=\tiny},
                xmin=2,
                xmax=1000,
                ymax=1,
                ymin=5e-4,
                title={$\alpha(|\cdot|^p)$ for $p \geq 2$},
            ]
            \addplot+[
                color=graph_color,
                mark=none,
                line width=1.5pt,
                solid,
            ] table[
                x=p,
                y=alpha,
                col sep=comma,
            ] {./data/data.tex};
            \end{loglogaxis}
        \end{tikzpicture}
    \caption{Calculation of the symmetry coefficient of ${|\cdot|^p : \R \to \R}$ for $p \in [2, 10^3]$, utilizing \cref{alg:compute symmetry for 1d norm}.}
    \label{fig::plot_of_alpha_p}

\end{wrapfigure}

Since Bregman's seminal paper \cite{bregman1967relaxation}, the field of convex optimization has enjoyed the use of the non-Euclidean proximity measure called the \textit{Bregman distance}. For historical results on Bregman distances and their associated first-order gradient descent and proximal-based algorithms we refer the reader to \cite{nemirovskiĭ1983problem,beck2003mirror,censor1992proximal,chen1993convergence,eckstein1993nonlinear,de1986relaxed,bauschke2001joint,bauschke1997legendre,bauschke2003bregman}. These algorithms have a wide range of applications, such as in low-rank minimization \cite{dragomir2021quartic}, quadratic inverse problems \cite{bolte2018first}, $D$-optimal design problems \cite{lu2018relatively}, Fisher markets \cite{birnbaum2011distributed}, minimax problems \cite{teboulle1992entropic}, Poisson linear inverse problems \cite{bauschke2017descent}, and reinforcement learning \cite{sokota2022unified}.

An advantage of Bregman schemes---such as the NoLips algorithm in \cite{bauschke2017descent}---is their ability to handle functions that are differentiable but does not have a Lipschitz continuous gradient. The rectifying property is that of \textit{relative smoothness}, see \cite{birnbaum2011distributed,lu2018relatively,bauschke2017descent}. A problem's relative smoothness with respect to a reference function $h$, impacts the upper step size bound in the NoLips convergence analysis \cite{bauschke2017descent}. The step size bound also depends on the \textit{symmetry coefficient} $\alpha(h)$. The larger the symmetry coefficient $\alpha(h)$---the larger the step size upper bound in NoLips.

The Bregman distance is in general asymmetric and its worst case asymmetry is quantified in the symmetry coefficient. A Legendre function $h$ which does not enjoy an open effective domain, necessarily satisfies an absence of symmetry, i.e., $\alpha(h) = 0$, see \cite[Proposition 2]{bauschke2017descent}. The one-dimensional functions $|\cdot|^p : \R \to \R$ with $p=2$ and $p=4$ have symmetry coefficients $1$ and $2-\sqrt{3}$, respectively \cite{bauschke2017descent,teboulle2018simplified}. The result for $p=2$ is known to hold independently of the underlying dimension, i.e., the symmetry coefficient of $\norm{\cdot}_2^2 : \R^n \to \R$ equals 1 for all positive integers $n$, see \cite{bauschke2017descent}.

In this paper, we extend these results---concerning the computation of $\alpha(|\cdot|^p)$ and $\alpha(\norm{\cdot}_2^p)$ for $p > 1$---in several different ways, summarized here as part of our \textbf{main contributions:}
\begin{enumerate}[label={\bfseries C\arabic*}]
    \item In \cref{technical lemma,corollary to lemmas,alg:compute symmetry for 1d norm} we show that $\alpha(|\cdot|^p)$ can be computed with the bisection method over the unit interval. See \cref{fig::plot_of_alpha_p} for a plot of these values. \label{item::C_1}
    \item In \cref{p norm hard} we show that $\alpha(\norm{\cdot}_2^p) = \alpha(|\cdot|^p)$, independently of the underlying dimension $n \geq 1$ of $\norm{\cdot}_2^p : \R^n \to \R$. \label{item::C_2}
    \item In \cref{corassymptotic theorem higher dim} we prove that the function $\alpha(\norm{\cdot}_2^p) > 1/(2p)$ for $p > 2$. Furthermore, we prove the asymptotic equality $\alpha(\norm{\cdot}_2^p) \sim 1/(2p)$ as $p \to \infty$. See \cref{fig::plot_of_alpha_p} where this asymptotic bound is visible. \label{item::C_3}
\end{enumerate}

Since the step size upper bound in NoLips is an increasing function of the symmetry coefficient, the lower bound in \labelcref{item::C_3} can be used to give a valid relaxed upper step size bound for Legendre function of the form $h = \norm{\cdot}_2^p$. Moreover, this relaxed upper bound becomes tight when we pass to the limit $p \to \infty$, in the sense that $\alpha(\norm{\cdot}_2^p) \sim 1/(2p)$ as $p \to \infty$.

\begin{table}[t]
    \centering
    \begin{tabular}{|c||c|c|c|c|c|c|}
    \hline
         &  $h=|\cdot|^2$ & $h=|\cdot|^3$ & $h=|\cdot|^4$ & $h=|\cdot|^6$ & $h = |\cdot|^8$ & $h = |\cdot|^{10}$\\
    \hline
    \hline
      $\alpha(h)$ & 1 &$\frac{1 - \sqrt{2}\sqrt[4]{3} + \sqrt{3}}{2}$ & $2-\sqrt{3}$ & $\frac{7-3\sqrt{5}}{2}$ & \cref{eq:alpha8} & \cref{eq:alpha10} \\
    \hline
    \end{tabular}
    \caption{Special cases of the symmetry coefficient for Legendre functions $h$. For a derivation of the computation of $\alpha(|\cdot|^p)$ when $p \in \{3, 6, 8, 10\}$, see \cref{sec:closed form}.}
    \label{tab::special_cases_of_symmetry_coefficient}
\end{table}

Furthermore, we provide explicit and closed form expressions of $\alpha(|\cdot|^p)$ when $p \in \{3, 6, 8, 10\}$. See \cref{tab::special_cases_of_symmetry_coefficient} and \cref{sec:closed form}. Here, we will also discuss why finding corresponding closed forms when $p$ is even and $p \geq 12$ might prove to be infeasible.

The consideration of sums of functions of the form $\norm{\cdot}_2^p : \R^n \to \R$ has proven fruitful both for theoretical results and for applications \cite{bolte2018first,zhang2021proximal}. For instance, in \cite[Proposition 2.1]{lu2018relatively}, it was proven that every twice-differentiable convex function $f$, which satisfies a mild growth condition on $\norm{\nabla^2 f}$, is relatively smooth to a sum of functions of the form $\norm{\cdot}_2^p$. Therefore, these functions can often be used as reference functions in the NoLips algorithm, which leads us to consider what $\alpha(\sum_{i = 1}^m \lambda_i \norm{\cdot}_2^{p_i})$ equals, where $\lambda_i > 0$ and $p_i > 1$. A lower bound of the symmetry coefficient of the function $\omega_{\beta, \gamma} = \frac{\beta}{4}\norm{\cdot}_2^4 + \frac{\gamma}{2}\norm{\cdot}_2^2$, where $\beta, \gamma > 0$, was determined in \cite[Proposition B.1]{zhang2021proximal} as $\alpha(\omega_{\beta, \gamma}) \geq \min\{\beta, \gamma\}/(5\cdot \max\{\beta, \gamma\})$.

In this paper, we not only improve said lower bound, but exactly compute $\alpha(\omega_{\beta, \gamma})$ for all $\beta, \gamma > 0$. Moreover, we completely characterize the symmetry coefficient of these types of functions, $\sum_{i = 1}^m \lambda_i \norm{\cdot}_2^{p_i}$, as summarized below in our continued \textbf{main contributions:}

\begin{enumerate}[label={\bfseries C\arabic*}]
    \setcounter{enumi}{3}
    \item In \cref{Summation of Positively Hom} we provide an upper and lower bound of $\alpha\left(\sum_{i = 1}^m h_i\right)$ when all $h_i$ are Legendre and positively homogeneous of distinct degrees $p_i > 1$. \label{item::C_4}
    \item In \cref{thm::last theorem} we prove that these upper and lower bounds coincide when all $h_i$ are of the form $h_i = \lambda_i\norm{\cdot}_{r_i}^{p_i}$ where $\lambda_i > 0$, $p_i > 1$, and $r_i \in \{2, p_i\}$. \label{item::C_5}
\end{enumerate}

Using the theory covered in \labelcref{item::C_2,item::C_5}, it follows as a special case that $\alpha(\omega_{\beta, \gamma}) = \alpha(|\cdot|^4) = 2-\sqrt{3}$, see \cref{tab::special_cases_of_symmetry_coefficient}.

\subsection{Outline}

The outline of the paper is as follows. In \cref{preliminaries}, we assume that each $h_i$ is Legendre and in \cref{General Symmetry Lower Bound} we derive the lower bound of the symmetry coefficient referenced in \labelcref{item::C_4}. In \cref{sec::pos hom fun}, we consider the case when each $h_i$ is positively homogeneous of degree $p_i >  1$. For instance, when the functions have unique degree of positive homogeneity, we can in \cref{Summation of Positively Hom} derive the upper bound referenced in \labelcref{item::C_4}. Moreover, in \cref{prop::perfect_symmetry} we prove that $\alpha(h) = 1$ occurs if and only if $h$ is a strictly convex quadratic with full domain. This proof is different, and assumes weaker assumptions, than that found in \cite{bauschke2001joint}. In \cref{sym 1d} we consider the special case where $h_i =|\cdot|^{p_i} : \R \to \R$, with $p_i > 1$ for all $i \in \{1, \dots, m\}$. In \cref{Monotonically Decreasing Symmetry Coefficient} we show that the function $p \mapsto \alpha\left(\norm{\cdot}_2^p\right)$ is strictly monotonically decreasing on the domain $[2, \infty)$. This fact, together with the lower and upper bounds given by \cref{General Symmetry Lower Bound} and \cref{Summation of Positively Hom}, respectively, gives \labelcref{item::C_5}. Moreover, we show \labelcref{item::C_1} and \cref{thm:assymptotic theorem}, which is the main result behind \labelcref{item::C_3}. Finally, in \cref{sym nd}, we consider the case where $h_i = \norm{\cdot}_2^{p_i} : \R^n \to \R$ with $p_i > 1$ for all $i \in \{1, \dots, m\}$ for arbitrary $n \geq 1$. In particular, we show in \cref{p norm hard} the result referenced in \labelcref{item::C_2}, i.e., that $\alpha\left(\norm{\cdot}_2^p\right)$ is independent of the dimension of the underlying space.

\subsection{Notation and Definitions}\label{sec::notation_and_definitions}

The notation we use is standard and follows for example \cite{rockafellar1970convex} or \cite{bauschke2011convex}. 

Let $\langle \cdot, \cdot \rangle : \R^n \times \R^n \to \R$ denote the scalar product on $\R^n$, given by $\langle x, y \rangle = \sum_{i=1}^n x_iy_i$ for all $x, y \in \R^n$. Let $\mathbb{S}^n_{++}$ denote the set of symmetric positive definite matrices of size $n \times n$.  

Let $p \in \R_{++}$, $S \subset \R^n$ and $Y$ equal either $\R \cup \{\infty\}$ or $\R^n$. A function $f : \R^n \to Y$ is said to be \textit{positively homogeneous of degree} $p$ \textit{over} $S$ if $f(\lambda x) = \lambda^p f(x)$ holds for all $\lambda \in \R_{++}$ and all $x \in S$ such that $\lambda x \in S$. When $S = \R^n$, the function $f$ is simply said to be \textit{positively homogeneous of degree} $p$. 
 
 Given a function $h : \R^n \to \Rext$, the \textit{effective domain} of $h$ is the set $\dom h \coloneqq \{x \in \R^n \mid h(x) < \infty\}$. A proper and convex function $h$ is said to be \textit{essentially smooth} \cite[Section 26]{rockafellar1970convex} if the following three properties hold:
 \begin{enumerate}[label=(\roman*)]
    \item $S \coloneq \Int(\dom h)$ is non-empty,
    \item $h$ is differentiable on $S$,
    \item for all sequences $\left\{x^{(i)}\right\}_{i = 0}^\infty$ in $S$ converging to a point of $\bdry S$, then $\lim_{i \to \infty} \norm{\nabla h\left(x^{(i)}\right)}_2 = \infty$.
\end{enumerate} Equivalently, if $h$ is proper, closed and convex, then $h$ is essentially smooth if and only if $\partial h$ is a single-valued mapping \cite[Theorem 26.1]{rockafellar1970convex}, i.e., if $\partial h(x)$ contains at most one element for each $x \in \R^n$. The function $h$ is said to be a \textit{Legendre function} \cite[Section 26]{rockafellar1970convex} if $h$ is a proper, closed, and convex function that is strictly convex on $\Int(\dom h )$ and essentially smooth. Recall that, by \cite[Theorem 26.5]{rockafellar1970convex}, $h$ is a Legendre function if and only if $h^*$ is a Legendre function. 

Given a function $h : \R^n \to \Rext$ that is differentiable on $\Int(\dom h)$, the \textit{Bregman distance} $D_h : \R^n \times \R^n \to \Rext$ is defined as
\begin{equation*}
    D_h(x, y) \coloneq \begin{cases}
       h(x) - h(y) - \langle\nabla h(y), x - y \rangle & \text{if }y \in  \Int (\dom h),\\  
       \infty & \text{otherwise,} 
    \end{cases}
\end{equation*}
for all $(x, y) \in \R^n \times \R^n$. Note that when $h$ is a Legendre function, then $D_h(x, y) > 0$ holds for all $x, y \in \Int(\dom h)$ such that $x \neq y$, and if $D_h(x, y) = 0$ holds for some $x, y \in \Int(\dom h)$, then $x = y$. 

The \textit{symmetry coefficient} of a Legendre function $h : \R^n \to \Rext$ is defined as \begin{equation}\label{eq::definition_of_symmetry_coefficient}\alpha(h) \coloneq \inf\left\{\left. \frac{D_h(x, y)}{D_h(y, x)} \right| x, y \in \Int(\dom h)\text{ and } x\neq y \right \}.\end{equation} Since the symmetry coefficient is only defined for Legendre functions, essential smoothness implies that there exists points $x, y \in \Int(\dom h)$ such that $x\neq y$, and so the infimum in \eqref{eq::definition_of_symmetry_coefficient} ranges over a non-empty set. Furthermore, the strict convexity of $h$ over $\Int(\dom h)$ implies that both the numerator and denominator in \eqref{eq::definition_of_symmetry_coefficient} are always strictly positive. 

We will refer to the property $\alpha(h) = 1$ as $h$ having \textit{perfect symmetry}, following \cite{bauschke2017descent}.
\section{Preliminaries} \label{preliminaries}
The following proposition collects some basic operations under which the symmetry coefficient is preserved. This result will be used multiple times throughout this paper. Note that the property $\alpha(h) = \alpha(h^*) \in [0, 1]$ was shown in \cite{bauschke2017descent}.

\begin{prp} \label{basic proposition}
    Let $h : \R^n \to \R \cup \{\infty\}$ be a Legendre function and let $b \in \R^n$, $c \in \R$, $x_0 \in \R^n$, $\lambda \in \R_{++}$ and $L \in \R^{n \times n}$ be nonsingular. Then, \begin{equation}\label{basic proposition eq}\alpha(h) = \alpha(h^*) = \alpha(h + \langle b, \cdot\rangle + c) = \alpha(h(\cdot - x_0)) =\alpha(\lambda h) = \alpha(h \circ L) \in [0, 1]. \end{equation}
\end{prp}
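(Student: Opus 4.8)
The plan is to verify each of the six quantities in \eqref{basic proposition eq} equals $\alpha(h)$ by exhibiting, in each case, a bijective correspondence between the pairs $(x,y)$ ranging over $\Int(\dom h)\times\Int(\dom h)$ with $x\neq y$ used to compute $\alpha(h)$ and the pairs used to compute the transformed quantity, under which the ratio $D_h(x,y)/D_h(y,x)$ is preserved. Since $\alpha$ is defined as an infimum of such ratios over a nonempty set, preserving the set of attained ratios preserves the infimum. Before starting I would note that in each case one must also check that the transformed function is again Legendre, so that $\alpha$ of it is even defined; this is standard (\cite[Section 26]{rockafellar1970convex} and \cite[Theorem 26.5]{rockafellar1970convex} for the conjugate), and $\Int(\dom\cdot)$ transforms in the obvious way (it is unchanged under adding an affine term or scaling, shifted by $x_0$ under translation, and mapped by $L^{-1}$ under precomposition with $L$).

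\textbf{Affine perturbations, translation, and positive scaling.} For $g = h + \langle b,\cdot\rangle + c$ a direct computation gives $\nabla g(y) = \nabla h(y) + b$, and the linear and constant terms cancel in the Bregman difference, so $D_g(x,y) = D_h(x,y)$ for all $x,y$; the pair set is literally the same, hence $\alpha(g)=\alpha(h)$. For $g = h(\cdot - x_0)$ one has $\nabla g(y) = \nabla h(y-x_0)$ and $D_g(x,y) = D_h(x-x_0, y-x_0)$, so the substitution $(x,y)\mapsto(x-x_0,y-x_0)$ is a bijection of the relevant pair sets preserving the ratio. For $g = \lambda h$ with $\lambda>0$, $D_g(x,y) = \lambda D_h(x,y)$, so the ratio $D_g(x,y)/D_g(y,x) = D_h(x,y)/D_h(y,x)$ is unchanged pointwise.

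\textbf{Composition with a nonsingular linear map.} For $g = h\circ L$, differentiate to get $\nabla g(y) = L^\T \nabla h(Ly)$, whence $D_g(x,y) = h(Lx) - h(Ly) - \langle L^\T\nabla h(Ly), x-y\rangle = D_h(Lx, Ly)$, using $\langle L^\T u, v\rangle = \langle u, Lv\rangle$. Since $L$ is a bijection of $\R^n$ and maps $\Int(\dom g) = L^{-1}\Int(\dom h)$ onto $\Int(\dom h)$ with $x\neq y \iff Lx\neq Ly$, the map $(x,y)\mapsto(Lx,Ly)$ is a ratio-preserving bijection, giving $\alpha(g) = \alpha(h)$.

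\textbf{The conjugate and the range $[0,1]$.} The equality $\alpha(h) = \alpha(h^*)$ together with the containment $\alpha(h)\in[0,1]$ are attributed in the text to \cite{bauschke2017descent}, so I would simply cite that; alternatively, one can give the short self-contained argument: for $x,y\in\Int(\dom h)$ set $u = \nabla h(x)$, $v = \nabla h(y)$, which lie in $\Int(\dom h^*)$ since $h$ is Legendre and $\nabla h^* = (\nabla h)^{-1}$; the Fenchel--Young identity gives $D_h(x,y) = D_{h^*}(v,u)$, so $(x,y)\mapsto(\nabla h(x),\nabla h(y))$ is a ratio-\emph{reversing} bijection, which maps the infimum defining $\alpha(h)$ to the infimum of $D_{h^*}(u,v)/D_{h^*}(v,u)$ over swapped arguments, i.e.\ to $\alpha(h^*)$ itself by symmetry of the defining set under swapping $(x,y)$. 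Finally $\alpha(h)\le 1$ because the defining set of ratios is closed under $r\mapsto 1/r$ (swap $x$ and $y$), so its infimum is at most $1$, and $\alpha(h)\ge 0$ since every ratio is strictly positive by strict convexity on $\Int(\dom h)$. The main obstacle here is not any single computation — each is routine — but rather being careful about the bookkeeping of domains and the Legendre property in each case, and in the conjugate case correctly handling the argument-swap so that a ratio-reversing bijection still yields the \emph{same} infimum.
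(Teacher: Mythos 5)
Your proposal is correct and follows essentially the same route as the paper: the paper simply bundles the affine term, translation, positive scaling and nonsingular composition into the single function $\hat h(x)=\lambda h(Lx-x_0)+\langle b,x\rangle+c$, verifies it is Legendre via single-valuedness of $\partial\hat h$, and computes $D_{\hat h}(x,y)=\lambda D_h(Lx-x_0,Ly-x_0)$, which is exactly your case-by-case ratio-preserving change of variables carried out all at once, with the conjugate equality cited from \cite{bauschke2017descent} and the containment in $[0,1]$ obtained from the inversion symmetry of the ratio set just as you argue. Your optional Fenchel--Young argument for $\alpha(h)=\alpha(h^*)$ is a fine self-contained alternative to the citation; the only point to tighten is to make the verification that each transformed function is again Legendre explicit (as the paper does through $\partial\hat h$ being single-valued) rather than deferring it as standard.
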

\begin{proof}
    For the first equality, see \cite[Section 2.3]{bauschke2017descent}. For the rest of the equalities, let us define $\hat{h} : \R^n \to \R \cup \{\infty\}$ by \begin{equation}\label{eq::invariance}
    \hat{h}(x) \coloneq \lambda h(Lx-x_0) + \langle b, x \rangle + c
\end{equation} for all $x \in \R^n$. Since $L$ is nonsingular, the mapping $\phi : \Int (\dom \hat{h}) \to \Int (\dom h)$, defined by $\phi(x) \coloneqq Lx - x_0$ for all $x \in \Int (\dom \hat{h})$, is bijective. Since $h$ is a Legendre function, we get that $\hat{h}$ is a proper, closed, and convex function that is strictly convex on $\Int(\dom \hat{h})$. Moreover, since $L$ is nonsingular, we get from subdifferential calculus that $\partial {\hat{h}}(x) = \lambda L^\top \partial \hat{h}(Lx - x_0) + b$ holds for all $x \in \R^n$. Therefore, $\partial \hat{h}$ is a single-valued mapping, which in turn implies that $\hat{h}$ is essentially smooth \cite[Theorem 26.1]{rockafellar1970convex}. All in all, $\hat{h}$ is a Legendre function.  Furthermore, \begin{align*}
    D_{\hat{h}}(x, y) &= \hat{h}(x) - \hat{h}(y) - \langle \nabla \hat{h}(y), x-y \rangle \\
    &= \lambda h(Lx - x_0) - \lambda h(Ly - x_0) + \langle b, x-y\rangle - \langle \lambda L^\top \nabla h(Ly - x_0) + b, x-y\rangle \\
    &= \lambda \bigl( h(Lx - x_0) - h(Ly - x_0) - \langle \nabla h(Ly - x_0) , (Lx-x_0) - (Ly - x_0)\rangle\bigr) \\
    &= \lambda D_h(\phi(x), \phi(y))
\end{align*} holds for all $x, y \in \Int(\dom \hat{h})$. Therefore, $\alpha(\hat{h}) = \alpha(h)$ and the rest of the equalities in \eqref{basic proposition eq} hold. Finally, the inclusion in \eqref{basic proposition eq} follows since $\min\{D_h(x, y)/D_h(y, x), D_h(y, x)/D_h(x, y) \} \in (0, 1]$ holds for all $x, y \in \Int(\dom h)$ such that $x \neq y$.
\end{proof} 

The following proposition provides a general lower bound of the symmetry coefficient of $h \coloneqq \sum_{i=1}^m h_i$, as referenced in \labelcref{item::C_4}. It is general in the sense that we only assume that the functions $h_i$ are Legendre. The lower bound, given by \eqref{lowerbound}, will be central in proving \cref{Summation of Positively Hom}, \cref{prp::answering-proposition to question} and \cref{thm::last theorem}. In the latter two results, the symmetry coefficient will actually coincide with this lower bound: $\min_{i \in \{1, \dots, m\}}\left\{\alpha(h_i)\right\}$.

\begin{prp}\label{General Symmetry Lower Bound}
    Let $h_i : \R^n \to \Rext$ be a Legendre function for each $i \in \{1, \dots, m\}$ and suppose that $\cap_{i = 1}^m \Int(\dom h_i)$ is non-empty. Then, $h \coloneqq \sum_{i = 1}^m h_i$ is a Legendre function and \begin{equation}\label{lowerbound}
    \alpha(h) \in \left[ \min_{i \in \{1, \dots, m\}}\left\{\alpha(h_i)\right\}, 1\right].\end{equation}
\end{prp}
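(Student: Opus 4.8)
The plan is to verify the Legendre property of $h$ and then bound the ratio $D_h(x,y)/D_h(y,x)$ from below by the smallest of the individual ratios. For the first part, observe that a finite sum of proper, closed, convex functions sharing a common interior point is again proper, closed, and convex, and is strictly convex on $\cap_{i=1}^m \Int(\dom h_i)$ since each summand is convex and at least one is strictly convex there; essential smoothness follows because $\nabla h = \sum_{i=1}^m \nabla h_i$ is single-valued on the (nonempty, open) intersection, so $\partial h$ is single-valued and \cite[Theorem 26.1]{rockafellar1970convex} applies. One subtlety: I should confirm that $\Int(\dom h) = \cap_{i=1}^m \Int(\dom h_i)$, or at least that the interior is nonempty and that $h$ restricted there behaves correctly — this uses that $\dom h = \cap_{i=1}^m \dom h_i$ and standard facts on interiors of intersections of convex sets with a common interior point (e.g. \cite[Section 6]{rockafellar1970convex}).

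For the lower bound, the key observation is that the Bregman distance is additive over sums: $D_h(x,y) = \sum_{i=1}^m D_{h_i}(x,y)$ for all $x,y \in \Int(\dom h)$, which is immediate from linearity of the definition in $h$. Fix any distinct $x,y \in \Int(\dom h)$. Each $D_{h_i}(x,y)$ and $D_{h_i}(y,x)$ is strictly positive by strict convexity and the Legendre property. Set $\beta \coloneqq \min_{i}\{\alpha(h_i)\}$; then $D_{h_i}(x,y) \geq \alpha(h_i)\, D_{h_i}(y,x) \geq \beta\, D_{h_i}(y,x)$ for each $i$. Summing over $i$ gives $D_h(x,y) = \sum_i D_{h_i}(x,y) \geq \beta \sum_i D_{h_i}(y,x) = \beta\, D_h(y,x)$, hence $D_h(x,y)/D_h(y,x) \geq \beta$. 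Taking the infimum over all admissible $(x,y)$ yields $\alpha(h) \geq \beta = \min_i\{\alpha(h_i)\}$. The upper bound $\alpha(h) \leq 1$ is already contained in \cref{basic proposition} (or re-derived from $\min\{r, 1/r\} \leq 1$).

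I do not expect a serious obstacle here: the elementary inequality "$a_i \geq \beta b_i$ for all $i$ with $b_i > 0$ implies $\sum a_i \geq \beta \sum b_i$" does all the work once additivity of the Bregman distance is noted. The only point requiring a little care is the bookkeeping around the effective domain — ensuring that the pairs $(x,y)$ over which $\alpha(h)$ is taken are exactly the pairs in a set where every $D_{h_i}$ is simultaneously finite and positive — but this is handled by the common-interior-point hypothesis and the Legendre assumptions, so it amounts to citing \cite{rockafellar1970convex} rather than genuine new work.
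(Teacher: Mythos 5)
Your proposal is correct and follows essentially the same route as the paper: identify $\Int(\dom h)=\cap_{i=1}^m \Int(\dom h_i)$ via Rockafellar's results, get essential smoothness from single-valuedness of $\partial h = \sum_i \partial h_i$ and \cite[Theorem 26.1]{rockafellar1970convex}, then use additivity of the Bregman distance and the elementary summed inequality, with the upper bound taken from \cref{basic proposition}. No substantive differences.
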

\begin{proof}
Since $\cap_{i = 1}^m \Int(\dom h_i)$ is non-empty, \cite[Theorem 6.5]{rockafellar1970convex} implies that $\Int (\dom h) = \cap_{i = 1}^m \Int(\dom h_i)$, after identifying the relative interior of $\dom h$ as simply the interior of $\dom h$. Therefore, $h$ is proper, closed, and strictly convex on $\Int (\dom h)$. By \cite[Theorem 23.8]{rockafellar1970convex}, $\partial h = \sum_{i = 1}^m \partial h_i$ and so $\partial h$ is a single-valued mapping, which implies that $h$ is essentially smooth \cite[Theorem 26.1]{rockafellar1970convex}. Therefore, $h$ is a Legendre function.

Let $x, y \in \Int(\dom h) = \cap_{i = 1}^m \Int(\dom h_i)$ such that $x \neq y$, then $$\frac{D_h(x, y)}{D_h(y, x)} = \frac{\sum_{i = 1}^m D_{h_i}(x, y)}{\sum_{i = 1}^m D_{h_i}(y, x)}.$$ For each $j \in \{1, \dots, m\}$ we have, by definition, that $D_{h_j}(x, y)/D_{h_j}(y, x) \geq \alpha(h_j) \geq \min_{i \in \{1, \dots, m\}}\left\{\alpha(h_i)\right\}$ and so $$\sum_{j = 1}^m D_{h_j}(x, y) \geq \sum_{j = 1}^m D_{h_j}(y, x) \min_{i \in \{ 1, \dots, m\}}\left\{\alpha(h_i)\right\}.$$ By \cref{basic proposition}, $\alpha(h) \in [0, 1]$, therefore \eqref{lowerbound} holds.
\end{proof}

If $h$ is separable, i.e., if $h(x) = \sum_{i = 1}^m h_i(x_i)$ holds for all $x = (x_1, \dots, x_m) \in \R^m$, then \cref{General Symmetry Lower Bound} can be refined.

\begin{prp}\label{Symmetry when Separable}
    Let $h_i:\R \to \Rext$ be a Legendre function for each $i \in \{ 1, \dots, m\}$. Let $h: \R^m \to \Rext$ be defined by $h(x) = \sum_{i=1}^m h_i(x_i)$ for all $x \in \R^m$. Then, $h$ is a Legendre function and $$\alpha(h) = \min_{i \in \{1, \dots, m\}} \{\alpha(h_i)\}.$$
\end{prp}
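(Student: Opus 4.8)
The plan is to prove the two inequalities $\alpha(h) \geq \min_i \alpha(h_i)$ and $\alpha(h) \leq \min_i \alpha(h_i)$ separately. The first inequality is immediate: the separable function $h$ is in particular of the form $\sum_{i=1}^m \tilde h_i$, where $\tilde h_i(x) \coloneqq h_i(x_i)$ is a Legendre function on $\R^m$ (essential smoothness, strict convexity on the interior of its effective domain $\{x \in \R^m \mid x_i \in \Int(\dom h_i)\}$, and the other Legendre properties all lift coordinatewise), and $\cap_{i=1}^m \Int(\dom \tilde h_i)$ is non-empty precisely because it equals $\prod_{i=1}^m \Int(\dom h_i)$. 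Hence \cref{General Symmetry Lower Bound} applies and gives $\alpha(h) \geq \min_i \alpha(\tilde h_i)$; it remains only to note $\alpha(\tilde h_i) = \alpha(h_i)$, which follows since $D_{\tilde h_i}(x,y) = D_{h_i}(x_i,y_i)$ and, for any pair $a \neq b$ in $\Int(\dom h_i)$, choosing $x,y \in \R^m$ that differ only in coordinate $i$ (with $x_i = a$, $y_i = b$, all other coordinates equal and in the respective interiors) realizes the ratio $D_{h_i}(a,b)/D_{h_i}(b,a)$.

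For the reverse inequality, I would exploit the fact that the Bregman distance of a separable function splits across coordinates while the infimum in \eqref{eq::definition_of_symmetry_coefficient} is over \emph{all} pairs $x \neq y$. Fix the index $j$ attaining $\min_i \alpha(h_i)$. Given any $\varepsilon > 0$, pick $a \neq b$ in $\Int(\dom h_j)$ with $D_{h_j}(a,b)/D_{h_j}(b,a) < \alpha(h_j) + \varepsilon$. Now take $x, y \in \Int(\dom h)$ agreeing in every coordinate except the $j$-th, with $x_j = a$ and $y_j = b$. Then $D_h(x,y) = D_{h_j}(a,b)$ and $D_h(y,x) = D_{h_j}(b,a)$, since every other coordinate contributes $D_{h_i}(x_i,x_i) = 0$ to both sums. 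Hence $D_h(x,y)/D_h(y,x) = D_{h_j}(a,b)/D_{h_j}(b,a) < \alpha(h_j) + \varepsilon$, and since $\varepsilon$ was arbitrary, $\alpha(h) \leq \alpha(h_j) = \min_i \alpha(h_i)$. Combining the two inequalities yields equality.

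I do not anticipate a genuine obstacle here — the result is essentially a bookkeeping consequence of the additive structure of the Bregman distance under separability, plus the invariance of $\alpha$ already recorded in \cref{basic proposition,General Symmetry Lower Bound}. The only point requiring mild care is checking that the "test pairs" $x, y$ used for the upper bound actually lie in $\Int(\dom h) = \prod_{i=1}^m \Int(\dom h_i)$; this is guaranteed because each $\Int(\dom h_i)$ is non-empty (essential smoothness) and open, so one can always complete $a, b$ in coordinate $j$ to full points of $\Int(\dom h)$ by inserting an arbitrary common value from $\Int(\dom h_i)$ in each remaining coordinate $i \neq j$.
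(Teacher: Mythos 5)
Your upper-bound argument is exactly the paper's and is fine, but the lower-bound half has a genuine gap: the coordinate lifts $\tilde h_i(x) \coloneqq h_i(x_i)$, viewed as functions on $\R^m$, are \emph{not} Legendre functions when $m \geq 2$. Their effective domain is a slab $\R^{i-1} \times \dom h_i \times \R^{m-i}$, and on the interior of that slab $\tilde h_i$ is constant along every coordinate direction $e_k$ with $k \neq i$, so strict convexity on $\Int(\dom \tilde h_i)$ fails; contrary to your parenthetical claim, strict convexity does not lift coordinatewise. Consequently \cref{General Symmetry Lower Bound} does not apply to the decomposition $h = \sum_i \tilde h_i$, and the quantity $\alpha(\tilde h_i)$ you equate with $\alpha(h_i)$ is not even defined by \eqref{eq::definition_of_symmetry_coefficient}: for pairs $x \neq y$ with $x_i = y_i$ one gets $D_{\tilde h_i}(x,y) = D_{\tilde h_i}(y,x) = 0$, so the defining infimum ranges over ratios of the form $0/0$. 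This also leaves the assertion that $h$ itself is Legendre unproved in your write-up, since you obtained it from the same (inapplicable) proposition.

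The repair is to argue the lower bound directly rather than cite \cref{General Symmetry Lower Bound}, which is what the paper does: for $x \neq y$ in $\Int(\dom h) = \prod_{i=1}^m \Int(\dom h_i)$, let $\mathcal{J} \coloneqq \{j \mid x_j \neq y_j\}$ (non-empty); only the coordinates in $\mathcal{J}$ contribute to $D_h(x,y)$ and $D_h(y,x)$, and for each $j \in \mathcal{J}$ one has $D_{h_j}(x_j,y_j) \geq \alpha(h_j) D_{h_j}(y_j,x_j) \geq \min_i \alpha(h_i)\, D_{h_j}(y_j,x_j)$; summing over $\mathcal{J}$ gives $D_h(x,y)/D_h(y,x) \geq \min_i \alpha(h_i)$. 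Legendre-ness of $h$ is checked separately (properness, closedness, strict convexity on the product of interiors — which does hold for the full sum because at least one coordinate with $x_j \neq y_j$ contributes strictly — and single-valuedness of $\partial h = \prod_i \partial h_i$, giving essential smoothness). With that replacement your overall structure (two inequalities, test pairs differing in one coordinate for the upper bound) coincides with the paper's proof.
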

\begin{proof}
One could easily show that the function $h$ is proper, closed, and strictly convex on $\Int(\dom h) = \Pi_{i = 1}^m \Int(\dom h_i).$ Since $\partial h = \Pi_{i = 1}^m \partial h_i$, the mapping $\partial h$ is single-valued which implies that $h$ is essentially smooth \cite[Theorem 26.1]{rockafellar1970convex}. Therefore, $h$ is a Legendre function.

 Let $x, y \in \Int(\dom h)$ such that $x \neq y$. Let $\mathcal{J} \coloneq \{j \in \{1, 2, \dots, m\} \mid x_j \neq y_j\}$. By assumption, $\mathcal{J}$ is non-empty. For each $j \in \mathcal{J}$ we have that $D_{h_j}(x_j, y_j)/D_{h_j}(y_j, x_j) \geq \alpha(h_j) \geq \min_{i \in \{1, \dots, m\}}\left\{\alpha(h_i)\right\}$ and so $$\sum_{j \in \mathcal{J}} D_{h_j}(x_j, y_j) \geq \sum_{j \in \mathcal{J}} D_{h_j}(y_j, x_j) \min_{i \in \{ 1, \dots, m\}}\left\{\alpha(h_i)\right\}$$ which implies that
     \begin{align*}
        \frac{D_h(x, y)}{D_h(y, x)} &= \frac{\sum_{i = 1}^mD_{h_i}(x_i, y_i)}{\sum_{i = 1}^mD_{h_i}(y_i, x_i)} \\
        &= \frac{\sum_{j \in \mathcal{J}}D_{h_j}(x_j, y_j)}{\sum_{j \in \mathcal{J}}D_{h_j}(y_j, x_j)} \geq \min_{i \in \{1, \dots, m\}}\left\{\alpha(h_i)\right\}.
    \end{align*}

    It remains to show that $\alpha(h) \leq \min_{i \in \{1, \dots, m\}} \{\alpha(h_i)\}$. Let $j \in \{1,\dots, m\}$ be arbitrary. Let $x, y \in \Int(\dom h)$ such that $x_j \neq y_j$ and $x_i = y_i$ for all $i \in \{1, \dots, m\} \setminus \{j\}$. We then have that $$D_h(x, y) / D_h(y, x) = D_{h_j}(x_j, y_j)/D_{h_j}(y_j, x_j).$$ Therefore, $\alpha(h) \leq \alpha(h_j)$ and since $j$ was arbitrary we are done.
\end{proof}
\section{Positively Homogeneous Functions} \label{sec::pos hom fun}
In this section, we focus on a subclass of Legendre functions, specifically the class of Legendre and positively homogeneous functions. Here are this section's main results.
\begin{enumerate}[label={\bfseries R\arabic*}]
    \item If $h$ is Legendre and positive homogeneous of degree $p > 1$ over $\Int(\dom h)$ $\implies \dom h = \R^n$.
    
    A function $h : \R^n \to \R \cup \{\infty\} $ can be Legendre \textit{or} positively homogeneous of degree $p > 1$ over $\Int(\dom h)$, without necessarily having full effective domain. But if $h$ is both Legendre and positively homogeneous, then we can show that $\dom h = \R^n$. Furthermore, in that case, the functions $h, \nabla h,$ and $D_h$ can be shown to be positively homogeneous of degree $p, p-1,$ and $p$, respectively, see \cref{pos hom implies domain}. \label{R1}
    
    \item The function $h : \R^n \to \Rext$ is a strictly convex quadratic function with $\dom h = \R^n$ $\iff$ $\alpha(h) = 1$.

    Of particular interest is the left implication, since the right implication is a straightforward calculation. A variant of this result has been shown for the special case when $n = 1$, $\dom h = \R_{++}$, and $h$ assumed to be twice continuously differentiable on $\R_{++}$ in \cite{bauschke2001joint}. In our argument (see the proof of \cref{prop::perfect_symmetry}) we do not put any such extraneous assumptions on $h$. In fact, our argument assumes only that $h$ is a Legendre function.  \label{R2}
    
    \item If $h_i$ is positively homogeneous of degree $p_i > 1$ for all $i \in \{1, \dots, m\}$, where the degrees satisfy $p_1 < p_2 \leq \dots \leq p_{m-1} < p_m$, then $\alpha(h) \leq \min\{\alpha(h_1), \alpha(h_m)\}$.

    This will be proved, using \cref{pos hom implies domain}, in \cref{Summation of Positively Hom}. In the coming section, we will frequently combine this upper bound with the lower bound of \cref{General Symmetry Lower Bound}, in order to fully determine the symmetry coefficient for some special cases. \label{R3}
\end{enumerate}

\subsection*{The Domains of Positively Homogeneous Legendre Functions (R1)}
We turn to proving \labelcref{R1}, which will be done in \cref{cor::domains}. Before that, we provide two lemmas: \cref{lem::nabla h and D_h pos hom} and \cref{pos hom implies domain}. The former does not assume that the function $h$ is Legendre and describes how the positive homogeneity of $h$ over a set $S$ implies positive homogeneity of $\nabla h$ and $D_h$ over $S$ and $S \times S$, respectively. The latter does assume that $h$ is Legendre and positive homogeneous over $\Int(\dom h)$ and describes how this implies that $h$ must have full domain. These will be combined to show \cref{cor::domains}.

\begin{lem} \label{lem::nabla h and D_h pos hom}
    Let $p \in (1, \infty)$ and let $h : \R^n \to \R \cup \{\infty\}$ be differentiable on an open set $S \subset \R^n$. Suppose that $h$ is positively homogeneous of degree $p$ over $S$, then $\nabla h$ is positively homogeneous of degree $p-1$ over $S$ and $D_h$ is positively homogeneous of degree $p$ over $S \times S$.
\end{lem}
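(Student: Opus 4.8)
The plan is to differentiate the homogeneity identity $h(\lambda x) = \lambda^p h(x)$ and the definition of the Bregman distance directly. First I would fix $\lambda \in \R_{++}$ and note that since $S$ is open and $h$ is positively homogeneous of degree $p$ over $S$, for any $x \in S$ with $\lambda x \in S$ we have $h(\lambda x) = \lambda^p h(x)$ on a neighborhood of $x$ (after possibly shrinking, using openness of $S$ and continuity of scalar multiplication). Applying the chain rule to $x \mapsto h(\lambda x)$ gives $\lambda \nabla h(\lambda x) = \lambda^p \nabla h(x)$, hence $\nabla h(\lambda x) = \lambda^{p-1}\nabla h(x)$, which is exactly positive homogeneity of $\nabla h$ of degree $p-1$ over $S$.

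Next I would plug these two identities into the formula for $D_h$. For $x, y \in S$ with $\lambda x, \lambda y \in S$,
\begin{align*}
D_h(\lambda x, \lambda y) &= h(\lambda x) - h(\lambda y) - \langle \nabla h(\lambda y), \lambda x - \lambda y\rangle\\
&= \lambda^p h(x) - \lambda^p h(y) - \langle \lambda^{p-1}\nabla h(y), \lambda(x-y)\rangle\\
&= \lambda^p\bigl(h(x) - h(y) - \langle \nabla h(y), x-y\rangle\bigr) = \lambda^p D_h(x,y),
\end{align*}
which establishes positive homogeneity of degree $p$ for $D_h$ over $S \times S$.

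The only subtlety — and the one point I would be careful about — is justifying the differentiation of the homogeneity relation at a single point, since a priori the identity $h(\lambda x) = \lambda^p h(x)$ is only assumed pointwise over $S$, not on a full neighborhood where one could differentiate freely in $x$. This is handled by the openness of $S$: for fixed $\lambda$, the set $\{x : x \in S \text{ and } \lambda x \in S\}$ is open (it is $S \cap \lambda^{-1}S$), so on that open set both sides are differentiable functions of $x$ that agree, and therefore their gradients agree there; for $x$ in this set the scaling identity for $\nabla h$ holds, and every $x \in S$ together with its image $\lambda x \in S$ falls into this set by hypothesis. The degree condition $p > 1$ is not strictly needed for the computation itself but ensures the exponents $p-1$ and $p$ are the relevant positive ones and matches the surrounding development. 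I would also remark that no convexity or Legendre property is used here, consistent with the lemma's statement.
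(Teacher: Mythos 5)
Your proof is correct and follows essentially the same route as the paper: differentiate the homogeneity identity via the chain rule to get $\lambda \nabla h(\lambda x) = \lambda^p \nabla h(x)$, then substitute both identities into the definition of $D_h$. Your extra remark about working on the open set $S \cap \lambda^{-1}S$ just makes explicit a point the paper leaves implicit when it differentiates $h \circ (\lambda \Id)$ at $x$.
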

\begin{proof}
    If $S = \emptyset$ then the statement is vacuously true. Otherwise, let $x \in S$ and $\lambda \in \R_{++}$ be such that $\lambda x \in S$. The function $h \circ (\lambda \Id)$ is differentiable at $x$, and after differentiating we get $\lambda \nabla h(\lambda x) = \lambda^p \nabla h(x)$ which gives that $\nabla h$ is positively homogeneous of degree $p-1$ over $S$. 

    Let also $y \in S$ such that $\lambda y \in S$. Then \begin{align*}
        D_h(\lambda x, \lambda y) &= h(\lambda x) - h(\lambda y) - \langle \nabla h(\lambda y), \lambda x - \lambda y \rangle \\
        &= \lambda^p h(x) - \lambda^p h(y) - \lambda^p \langle \nabla h(y), x-y \rangle = \lambda^p D_h(x, y)
    \end{align*} and so $D_h$ is positively homogeneous of degree $p$ over $S \times S$.
    
\end{proof}
\begin{lem} \label{pos hom implies domain}
    Let $p \in (1, \infty)$ and $h : \R^n \to \R \cup \{\infty\}$ be a Legendre function. Suppose that $h$ is positively homogeneous of degree $p$ over $\Int(\dom h)$, then $\dom h = \R^n$.
\end{lem}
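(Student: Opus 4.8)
The plan is to argue by contradiction: suppose $\dom h \neq \R^n$, so that $\bdry(\dom h)$ is nonempty. The key tension is between three facts about a Legendre function that is positively homogeneous of degree $p>1$ over $S \coloneqq \Int(\dom h)$. First, $S$ is a convex set; since $h$ is positively homogeneous over $S$, for any $x \in S$ the whole ray $\{\lambda x \mid \lambda > 0\}$ that stays in $\dom h$ behaves predictably, and I would first show $S$ is in fact a convex cone (with apex possibly not included). Indeed, if $x \in S$ then for small $\lambda > 1$ we need $\lambda x \in \dom h$; using convexity of $\dom h$ together with the homogeneity relation $h(\lambda x) = \lambda^p h(x)$ extended by closedness of $h$, one deduces that $S = \Int(\dom h)$ is a cone. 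So without loss of generality $\dom h$ is a convex cone that is not all of $\R^n$, hence contained in a halfspace and possessing a boundary ray.

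The second ingredient is the essential smoothness of $h$: along any sequence $x^{(k)} \in S$ converging to a boundary point $\bar x \in \bdry S$, we must have $\norm{\nabla h(x^{(k)})}_2 \to \infty$. The third ingredient is \cref{lem::nabla h and D_h pos hom}, which gives that $\nabla h$ is positively homogeneous of degree $p-1$ over $S$. The strategy is to play the blow-up condition against the homogeneity of the gradient. Pick $x \in S$ and a boundary point $\bar x \in \bdry S$; since $S$ is an open convex cone, I would look at the segment from $x$ toward $\bar x$, or better, since $S$ is a cone, scale so as to approach the boundary "radially at unit distance." Concretely, consider a fixed point $z^{(0)} \in S$ near the boundary and the sequence $z^{(k)} = (1 - 1/k)\, \bar x + (1/k)\, x_0$ for an interior direction $x_0$; each $z^{(k)} \in S$ by convexity, $z^{(k)} \to \bar x$, so $\norm{\nabla h(z^{(k)})}_2 \to \infty$. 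On the other hand, because $S$ is a cone, I can also write points of $S$ arbitrarily close to $\bar x$ as small positive multiples $t\, w$ of a fixed interior point $w$ (taking $w$ near the boundary, $t \downarrow$ toward the value that lands on the boundary ray); applying $\nabla h(t w) = t^{p-1} \nabla h(w)$ shows $\norm{\nabla h}$ stays \emph{bounded} along such a sequence approaching that boundary ray — as $t \to t_{\bar x} > 0$ the norm tends to the finite value $t_{\bar x}^{p-1}\norm{\nabla h(w)}$ — contradicting essential smoothness, unless the boundary ray passes through the origin, i.e., $t_{\bar x} = 0$.

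That leaves the single remaining possibility that $\bdry S$ consists only of rays through $0$, equivalently $0 \in \bdry(\dom h)$ and $\dom h$ is a pointed-or-flat cone whose only boundary issue is at the apex and along its facial rays. Here I would handle the apex directly: take any unit vector $u$ with $u \in S$ and consider $\nabla h(t u) = t^{p-1}\nabla h(u) \to 0$ as $t \downarrow 0$, so $\norm{\nabla h(tu)}_2 \to 0$, which again contradicts essential smoothness at the boundary point $0$ — \emph{provided} $0 \in \bdry S$. The only way to escape every contradiction is $\bdry S = \emptyset$, i.e., $S = \R^n$, and since $h$ is closed with $\Int(\dom h) = \R^n$ we get $\dom h = \R^n$. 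I expect the main obstacle to be the bookkeeping in the second paragraph: rigorously producing, for a genuinely non-conical-looking boundary point, a sequence approaching it along which gradient homogeneity forces boundedness. The clean way around this is to nail down in the first step that $\Int(\dom h)$ really is an open convex cone (using closedness and homogeneity of $h$ to rule out bounded "caps"), after which every boundary point lies on a ray and the scaling argument against the blow-up condition closes the proof.
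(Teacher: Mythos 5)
The decisive step in your plan is the first one: that $S=\Int(\dom h)$ is a cone. You assert this follows from convexity of $\dom h$, the homogeneity relation, and closedness of $h$, but those ingredients do not suffice, and this is exactly where the whole content of the lemma sits. Take $h(x)=x^2$ for $x\in[0,1]$ and $h(x)=\infty$ otherwise: it is proper, closed, convex, strictly convex on $\Int(\dom h)=(0,1)$, and positively homogeneous of degree $2$ over $(0,1)$ in the paper's sense (the relation $h(\lambda x)=\lambda^2h(x)$ is only required when both $x$ and $\lambda x$ lie in $(0,1)$), yet $(0,1)$ is not a cone. What excludes such ``caps'' for a Legendre $h$ is essential smoothness, and invoking it radially is precisely the paper's proof: for $x\in S$ the set $C=\{t>0\mid tx\in S\}$ is an open interval $(t_0,t_1)$; by \cref{lem::nabla h and D_h pos hom}, $\norm{\nabla h(tx)}_2=t^{p-1}\norm{\nabla h(x)}_2$ remains bounded as $t\downarrow t_0$ (and as $t\uparrow t_1$ if $t_1<\infty$), whereas $t_0x$ and $t_1x$ would be boundary points of $S$, contradicting the gradient blow-up demanded by essential smoothness; hence $t_0=0$, $t_1=\infty$, so $0\in\Int(\dom h)$, and then any $y\in\R^n$ is recovered by scaling into a neighborhood of $0$ and back out. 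In short, the step you defer to ``closedness plus homogeneity'' is the lemma itself, and your justification for it is not repairable without the gradient argument.

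Your second paragraph also does not do what you want. With $w\in S$ fixed, the points $tw$ trace the ray through $w$, which (once $S$ is a cone) stays inside $S$ and approaches $\bdry S$ only at the apex, so it never produces a bounded-gradient sequence converging to a boundary point $\bar x\neq 0$; and if you let $w$ vary along a sequence converging to $\bar x$, homogeneity of $\nabla h$ gives no bound at all. Fortunately that case is vacuous: every boundary ray of a convex cone passes through the origin, and your apex argument ($\nabla h(tu)=t^{p-1}\nabla h(u)\to0$ as $t\downarrow0$ while $0\in\bdry S$ whenever $S\neq\R^n$) is correct and coincides with the paper's key observation. The genuine gap is therefore the unproved, and from your stated ingredients unprovable, cone step.
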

\begin{proof}
    Let $x \in \Int(\dom h)$ and define $t_0$ as the infimum over the non-empty interval $$C \coloneqq \{t \in \R_{++} \mid tx \in \Int(\dom h)\}.$$ Since $\Int(\dom h)$ is open, $t_0 < 1$. By \cite[Theorem 6.1]{rockafellar1970convex}, $\lambda x \in \Int(\dom h)$ holds for all $\lambda \in (t_0, 1)$ and so $t_0x \in \Cl(\dom h)$. By \cref{lem::nabla h and D_h pos hom}, $\norm{\nabla h(\lambda x)}_2 = \lambda^{p-1}\norm{\nabla h(x)}_2$ holds for all $\lambda \in (t_0, 1)$. Therefore, $\lim_{\lambda \downarrow t_0} \norm{\nabla h(\lambda x)}_2 \in \R$ and by the relative smoothness of $h$ we have that $t_0x \not\in \bdry(\Int(\dom h))$. However, since $t_0x \in \Cl (\dom h)$ we get that $t_0x \in \Int(\dom h)$. The only way this can happen is if $t_0 = 0$, since otherwise $t_0$ would not be the infimum over $C$. In a similar way, the supremum of $C$ is $\infty$. Therefore, if $x \in \Int(\dom h)$ then $\lambda x \in \Int(\dom h)$ holds for all $\lambda \in [0, \infty)$. In particular, we conclude that $0 \in \Int(\dom h)$.
    
    Now for any $y \in \R^n$ take $\lambda \in \R_{++}$ small enough such that $\lambda y \in \Int(\dom h)$. Therefore, $\lambda^{-1} \lambda y = y \in \Int(\dom h)$ and $\dom h = \R^n$.
\end{proof}
\begin{prp} \label{cor::domains}
    Let $p \in (1, \infty)$ and $h : \R^n \to \R \cup \{\infty\}$ be a Legendre function. Suppose that $h$ is positively homogeneous of degree $p$ over $\Int(\dom h)$. Then, $h$ is differentiable, and $h, \nabla h$, and $D_h$ are positively homogeneous of degree $p, p-1$, and $p$, respectively. 
\end{prp}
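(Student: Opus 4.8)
The plan is to obtain this proposition as an immediate corollary of the two preceding lemmas, with only a small amount of bookkeeping in between. First I would apply \cref{pos hom implies domain} to the hypotheses at hand: since $h$ is Legendre and positively homogeneous of degree $p > 1$ over $\Int(\dom h)$, that lemma gives $\dom h = \R^n$, and therefore $\Int(\dom h) = \R^n$.

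Next I would record differentiability. Because $h$ is a Legendre function it is, in particular, essentially smooth, so it is differentiable on $\Int(\dom h)$; combined with the previous step, $\Int(\dom h) = \R^n$, this says $h$ is differentiable on all of $\R^n$, which is the first claim. Moreover, now that $\Int(\dom h) = \R^n$, the hypothesis that $h$ is positively homogeneous of degree $p$ \emph{over} $\Int(\dom h)$ coincides with the statement that $h$ is positively homogeneous of degree $p$ in the unqualified sense (the case $S = \R^n$ of the definition).

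Finally I would invoke \cref{lem::nabla h and D_h pos hom} with the open set $S = \R^n$ and the function $h$, which is differentiable on $\R^n$ and positively homogeneous of degree $p$ there. The lemma then yields directly that $\nabla h$ is positively homogeneous of degree $p - 1$ over $\R^n$, i.e.\ positively homogeneous of degree $p - 1$, and that $D_h$ is positively homogeneous of degree $p$ over $\R^n \times \R^n$, i.e.\ positively homogeneous of degree $p$. This gives the two remaining claims.

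I do not expect any genuine obstacle: the content is entirely carried by \cref{pos hom implies domain} and \cref{lem::nabla h and D_h pos hom}. The only points that warrant a moment's attention are (i) that essential smoothness of a Legendre function upgrades differentiability from "on $\Int(\dom h)$" to "on $\R^n$" once the domain has been shown to be full, and (ii) that the qualifier "over $\Int(\dom h)$" in the positive-homogeneity hypothesis becomes vacuous after $\dom h = \R^n$ is established, so that \cref{lem::nabla h and D_h pos hom} applies verbatim with $S = \R^n$.
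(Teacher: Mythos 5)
Your proposal is correct and follows the paper's proof essentially verbatim: apply \cref{pos hom implies domain} to get $\dom h = \R^n$, note differentiability from essential smoothness on $\Int(\dom h) = \R^n$, and then apply \cref{lem::nabla h and D_h pos hom} with $S = \R^n$. The extra bookkeeping you mention (that the qualifier ``over $\Int(\dom h)$'' becomes vacuous once the domain is full) is exactly the implicit step the paper glosses over, so there is nothing to add.
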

\begin{proof}
    From \cref{pos hom implies domain}, we get that $\dom h = \R^n$, and so $h$ is differentiable on $\Int(\dom h) = \R^n$. The conclusion now follows from letting $S = \R^n$ in \cref{lem::nabla h and D_h pos hom}.
\end{proof}

\subsection*{Perfect Symmetry (R2)}

We turn to proving \labelcref{R2}, which will be done in \cref{prop::perfect_symmetry}. Before that, let us take a moment to motivate why it was natural to assume that $p > 1$ in \cref{lem::nabla h and D_h pos hom}, \cref{pos hom implies domain} and \cref{cor::domains}. 

\begin{prp} \label{prp::covering cases of p}
    Let $p \in\R_{++}$ and let $h : \R^n \to \R \cup \{\infty\}$ be a Legendre function. Suppose that $h$ is positively homogeneous of degree $p$ over $\Int(\dom h)$. Then, $p \neq 1$. 
    Furthermore, \begin{enumerate}[label=(\roman*)]
        \item if $p \in (0, 1)$, then $\alpha(h) = 0$, \label{prp::cases of p and homogenuity (i)}
        \item if $p \in (1, 2)$, then there exists a Legendre function $\hat{h} : \R^n \to \R \cup \{\infty\}$ which is positively homogeneous of degree $q \in (2, \infty)$ such that $\alpha(h) = \alpha(\hat{h})$.  \label{prp::cases of p and homogenuity (ii)}
    \end{enumerate}
\end{prp}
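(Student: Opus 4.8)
The plan is to handle the three claims in turn, exploiting the invariance properties from \cref{basic proposition} and the structural results of \cref{lem::nabla h and D_h pos hom,pos hom implies domain,cor::domains}.

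\textbf{Step 1: $p \neq 1$.} First I would rule out $p = 1$. If $h$ were Legendre and positively homogeneous of degree $1$ over $S \coloneqq \Int(\dom h)$, then by the argument in the proof of \cref{pos hom implies domain} (which only uses $p > 1$ to conclude boundedness of $\norm{\nabla h(\lambda x)}_2$ as $\lambda \downarrow t_0$, and degree $1$ works equally well since then $\norm{\nabla h(\lambda x)}_2 = \norm{\nabla h(x)}_2$ is actually constant) we would again get $\dom h = \R^n$ and $h$ differentiable everywhere. Then $\nabla h$ is positively homogeneous of degree $0$, i.e. constant along rays; differentiating the Euler relation $h(\lambda x) = \lambda h(x)$ gives $\langle \nabla h(x), x\rangle = h(x)$ for all $x$, and combined with $\nabla h(\lambda x) = \nabla h(x)$ this forces $h$ to be linear on each ray through the origin with a ray-independent gradient, hence $h$ affine (in fact linear). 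But an affine function is not strictly convex on the (nonempty, open) set $S$, contradicting that $h$ is Legendre. Hence $p \neq 1$. I expect this to be the part requiring the most care: one must make sure the $p=1$ case of the "full-domain" lemma genuinely goes through, and then extract the contradiction with strict convexity cleanly (alternatively, one can note $\nabla h$ constant on rays plus $\langle \nabla h(x),x\rangle = h(x)$ already pins down $h$ as the support-type function of a point, which is affine).

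\textbf{Step 2: case $p \in (0,1)$, showing $\alpha(h) = 0$.} Here I would use the fact (from \cite{bauschke2017descent}, quoted in the excerpt) that $\alpha(h) = \alpha(h^*)$, together with the Legendre duality $h$ Legendre $\iff$ $h^*$ Legendre. When $h$ is positively homogeneous of degree $p \in (0,1)$ over $\Int(\dom h)$, its conjugate $h^*$ is positively homogeneous of the conjugate degree $q = p/(p-1) < 0$, so $h^*$ cannot have full effective domain (a proper convex function with full domain that is positively homogeneous of negative degree on the interior is impossible — it would blow up toward the origin), and hence $\dom h^*$ is not open. By \cite[Proposition 2]{bauschke2017descent}, a Legendre function without open effective domain has symmetry coefficient $0$, so $\alpha(h^*) = 0$ and therefore $\alpha(h) = 0$. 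I would spell out the conjugacy-of-degrees computation and the "$\dom h^*$ not open" claim carefully, since that is where the real content sits; alternatively one can argue directly that $h$ itself cannot have full domain when $0 < p < 1$ (it is concave-like along rays near where it would need to be finite) and invoke \cite[Proposition 2]{bauschke2017descent} on $h$ directly.

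\textbf{Step 3: case $p \in (1,2)$, reduction to degree $> 2$.} By \cref{cor::domains}, $h$ is differentiable with $\dom h = \R^n$ and $\nabla h$, $D_h$ positively homogeneous of degrees $p-1 \in (0,1)$ and $p$. Take $\hat{h} \coloneqq h^*$. Since $h$ is Legendre with full domain and positively homogeneous of degree $p$, its conjugate $\hat h = h^*$ is Legendre and positively homogeneous of the conjugate degree $q \coloneqq p/(p-1)$; for $p \in (1,2)$ one checks $q \in (2,\infty)$. Moreover $\dom h^* = \R^n$ as well (because $\nabla h$ is surjective onto $\R^n$: it is $0$-homogeneous-shifted, i.e. $(p-1)$-homogeneous, continuous, and its range is all of $\R^n$ since $h$ is Legendre with full domain and superlinear growth for $p>1$), so $\hat h$ is a genuine Legendre function positively homogeneous of degree $q \in (2,\infty)$ over $\Int(\dom\hat h) = \R^n$. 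Finally $\alpha(h) = \alpha(h^*) = \alpha(\hat h)$ by \cref{basic proposition}. The main obstacle in this step is verifying that the conjugate of a Legendre positively homogeneous function of degree $p>1$ is again positively homogeneous (of the dual degree) and still Legendre with full domain; this is a standard but slightly delicate Fenchel-conjugacy computation, most cleanly done by differentiating $h(\lambda x) = \lambda^p h(x)$, using $\nabla h^*(\nabla h(x)) = x$, and tracking how $\nabla h$ rescales.
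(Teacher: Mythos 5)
Your part (ii) is essentially the paper's argument (take $\hat{h} = h^*$, use the conjugate degree $q = p/(p-1) \in (2,\infty)$ and $\alpha(h) = \alpha(h^*)$ from \cref{basic proposition}), and your $p \neq 1$ step reaches a valid contradiction, though by a detour: the paper does not need any full-domain argument, since degree-$1$ homogeneity already makes $h$ affine along the segment between $x$ and $\lambda x$ inside $\Int(\dom h)$, which violates strict convexity on the spot. Your intermediate claim that $\nabla h$ constant along rays plus Euler's relation forces a ray-independent gradient, ``hence $h$ affine,'' is not justified as stated (a positively $1$-homogeneous convex function need not be affine --- think of norms); it can be repaired using differentiability at the origin, but it is also unnecessary.

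The genuine gap is in Step 2. The result you invoke, \cite[Proposition 2]{bauschke2017descent}, requires the effective domain to be \emph{not open}, and you derive this from ``not full,'' which is not a valid inference --- and for your primary route the needed premise is in fact false. Take $n = 1$ and $h(x) = -\sqrt{x}$ for $x \geq 0$, $h(x) = \infty$ otherwise: this is Legendre and positively homogeneous of degree $p = \tfrac{1}{2}$ over $\Int(\dom h) = (0,\infty)$, and its conjugate is $h^*(y) = -1/(4y)$ for $y < 0$, $h^*(y) = \infty$ otherwise, so $\dom h^* = (-\infty, 0)$ is open (though not full) and Proposition 2 says nothing about $\alpha(h^*)$. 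Your alternative suggestion (show $h$ itself lacks full domain) suffers from the same defect: lacking full domain does not rule out an open domain. What is actually needed --- and what the paper proves --- is that $\dom h$ itself fails to be open, by exhibiting the origin as a point of $\dom h \setminus \Int(\dom h)$: closedness of $h$ gives $h(0) \leq \lim_{\lambda \downarrow 0} h(\lambda x) = \lim_{\lambda \downarrow 0} \lambda^p h(x) = 0$, so $0 \in \dom h$, while $\norm{\nabla h(\lambda x)}_2 = \lambda^{p-1}\norm{\nabla h(x)}_2 \to \infty$ as $\lambda \downarrow 0$ (here $p - 1 < 0$) excludes $0 \in \Int(\dom h)$, after first running the ray argument of \cref{pos hom implies domain} to know that $\lambda x \in \Int(\dom h)$ for all $\lambda \in \R_{++}$. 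Only then can Proposition 2 be applied, directly to $h$. Without this step your proof of (i) does not go through.
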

\begin{proof}
    Suppose that $p = 1$. Let $x \in \Int(\dom h)$ and take $\lambda \in \R_{++} \setminus \{1\}$ such that $\lambda x \in \Int(\dom h)$. This can be done since $\Int(\dom h)$ is non-empty and open. Then, for any $\theta \in (0, 1)$ we get that \begin{equation} \label{eq::contradicting strict convexity}
        h((1-\theta)x + \theta \lambda x) = h((1-\theta + \theta \lambda) x) = (1-\theta)h(x) + \theta h(\lambda x),
    \end{equation} where we in the second equality used that $1-\theta + \theta \lambda > 0$ and $h$ is positively homogeneous of degree $1$. Clearly, \eqref{eq::contradicting strict convexity} contradicts the strict convexity of $h$ over $\Int(\dom h)$ and so $p\neq1$ follows.

    \begin{enumerate}[labelindent=0pt,wide=0pt]
        \item[\labelcref{prp::cases of p and homogenuity (i)}] Suppose now that $p \in (0, 1)$. Let $x \in \Int(\dom h)$ and follow the argument as in \cref{pos hom implies domain} to get that $\lambda x \in \Int(\dom h)$ for all $\lambda \in \R_{++}$. The only difference is that $\lim_{\lambda \downarrow 0} \norm{\nabla h(\lambda x)}_2 =\lim_{\lambda \downarrow 0} \lambda^{p-1}\norm{\nabla h(x)}_2 = \infty$ and so $0 \notin \Int(\dom h)$. Since $h$ is closed, $h(0) = \liminf_{y \to 0}h(y) \leq \lim_{\lambda \downarrow 0} h(\lambda x) = \lim_{\lambda \downarrow 0} \lambda^ph(x) = 0$ and so $0 \in \dom h$. In particular, this implies that $\dom h$ is not open and by \cite[Proposition 2]{bauschke2017descent} we have that $\alpha(h) = 0$.
        \item[\labelcref{prp::cases of p and homogenuity (ii)}] By \cref{pos hom implies domain}, we have that $\dom h = \R^n$. By \cite[Theorem 26.5, Corollary 15.3.1]{rockafellar1970convex}, the conjugate function $h^*$ is Legendre and is positively homogeneous of degree $q = p/(p-1) \in (2, \infty)$, therefore the statement holds with $\hat{h} \coloneqq h^*$.
    \end{enumerate}
\end{proof}

As \cref{prp::covering cases of p} shows, we can often assume that the degree of positive homogeneity $p$ is greater than or equal to $2$. For instance, if $p \in (1, 2)$ then when analyzing the symmetry coefficient of $\frac{1}{p} |\cdot|^p : \R \to \R$, we can just as well analyze the symmetry coefficient of $\frac{1}{q}|\cdot|^q : \R \to \R$, where $1/p + 1/q = 1$ and $q \in (2, \infty)$, since $\left(\frac{1}{p} |\cdot|^p\right)^* = \frac{1}{q} |\cdot|^q$ \cite[Section 12]{rockafellar1970convex}. This is precisely what we will do in \cref{sym 1d,sym nd}.

Let us now concentrate on the case $p = 2$, covered in  \cref{prop::perfect_symmetry}. First, let us recall in \cref{thm::converse of Euler} the converse of Euler's Theorem for homogeneous functions. This theorem is classical, but we include a proof for the sake of completion. The proof idea comes from \cite[Exercise 9.3.9]{apostol2000calculus}.

\begin{thm}[Converse of Euler’s Theorem for Homogeneous Functions] \label{thm::converse of Euler}
    Let $p \in (1, \infty)$ and $h : \R^n \to \R \cup \{\infty\}$ be differentiable on $\Int(\dom h)$. Suppose that $\Int(\dom h)$ is a convex set and that  $p h(x) = \langle \nabla h(x), x \rangle$ holds for all $x \in \Int(\dom h)$, then $h$ is positively homogeneous of degree $p$ over $\Int(\dom h)$.
\end{thm}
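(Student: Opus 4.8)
The plan is to fix $x \in \Int(\dom h)$ and study, for each direction, the scalar function obtained by restricting $h$ to the ray through $x$. Concretely, define $g : \R_{++} \to \R$ by $g(\lambda) \coloneqq h(\lambda x)$; this is well-defined and differentiable on the set of $\lambda$ for which $\lambda x \in \Int(\dom h)$, which by convexity of $\Int(\dom h)$ (and the fact that $0$ may or may not lie in it) is an open interval $I \ni 1$. The hypothesis $p h(z) = \langle \nabla h(z), z\rangle$ evaluated at $z = \lambda x$ gives a first-order linear ODE for $g$: since $g'(\lambda) = \langle \nabla h(\lambda x), x\rangle = \lambda^{-1}\langle \nabla h(\lambda x), \lambda x\rangle = \lambda^{-1} p\, h(\lambda x) = p g(\lambda)/\lambda$, we obtain $g'(\lambda) = p g(\lambda)/\lambda$ on $I$. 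Solving this ODE yields $g(\lambda) = C \lambda^p$ for some constant $C$, and evaluating at $\lambda = 1$ gives $C = g(1) = h(x)$, hence $h(\lambda x) = \lambda^p h(x)$ for all $\lambda \in I$.

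The remaining work is to upgrade this from "$\lambda$ in some interval $I$ around $1$" to "all $\lambda \in \R_{++}$ with $\lambda x \in \Int(\dom h)$", which is what positive homogeneity over $\Int(\dom h)$ requires. The clean way is to observe that the identity $h(\lambda x) = \lambda^p h(x)$, once known for $\lambda$ near $1$, propagates: given any $\mu \in \R_{++}$ with $\mu x \in \Int(\dom h)$, apply the local result at the base point $\mu x$ in place of $x$ to get $h(\lambda (\mu x)) = \lambda^p h(\mu x)$ for $\lambda$ near $1$; chaining finitely many such steps along the segment of admissible $\lambda$'s (which form an open interval, hence is connected) transports the relation to $\mu$. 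Slightly more carefully: the set $J \coloneqq \{\lambda \in \R_{++} : \lambda x \in \Int(\dom h)\}$ is an open interval containing $1$, and on $J$ the function $g(\lambda) = h(\lambda x)$ satisfies the same ODE $g'(\lambda) = p g(\lambda)/\lambda$ (the derivation above used only $\lambda x \in \Int(\dom h)$, nothing about proximity to $1$), so $g(\lambda) = C\lambda^p$ on all of $J$ with $C = h(x)$ from the value at $\lambda = 1$; this directly gives $h(\lambda x) = \lambda^p h(x)$ for every $\lambda \in J$.

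I expect the main subtlety to be purely bookkeeping rather than mathematical depth: making sure that the set of admissible scalings is an interval (this uses convexity of $\Int(\dom h)$ together with $x \in \Int(\dom h)$, so that $\lambda_1 x, \lambda_2 x \in \Int(\dom h)$ implies $\lambda x \in \Int(\dom h)$ for all $\lambda$ between them) and that differentiability of $g$ on this interval is legitimate (it is, by the chain rule, since $h$ is differentiable on $\Int(\dom h)$). One should also handle the trivial case where $\Int(\dom h) = \emptyset$ separately (the statement is then vacuous). No use of the Legendre property, closedness, or convexity of $h$ itself is needed — only differentiability and convexity of the domain — which matches the generality in the statement.
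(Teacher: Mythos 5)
Your proposal is correct and follows essentially the same route as the paper: restrict $h$ to the ray through a fixed $x \in \Int(\dom h)$, use openness and convexity of $\Int(\dom h)$ to see that the admissible scalings form an open interval containing $1$, derive the Cauchy--Euler ODE $g'(\lambda) = p\,g(\lambda)/\lambda$ from the hypothesis, and solve it with the value at $\lambda = 1$ (the paper works with $\phi(t) = h(tx) - t^p h(x)$ and concludes $\phi \equiv 0$, which is the same computation). Your intermediate "propagation" paragraph is unnecessary, as you yourself note, since the ODE argument already applies on the whole interval.
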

\begin{proof}
     If $\Int(\dom h) = \emptyset$ then the statement is vacuously true. Otherwise, fix $x \in \Int(\dom h)$. Define $\phi : (t_0, t_1) \to \R$ by $\phi(t) = h(tx) - t^p h(x)$ for all $t \in (t_0, t_1)$, where $t_0$ and $t_1$ is the infimum and supremum of the non-empty set $ S \coloneqq \{t \in \R_{++} \mid tx \in \Int(\dom h)\}$, respectively. Since $\Int(\dom h)$ is open, $t_0 < t_1$ and $1 \in (t_0, t_1)$. Since $S$ is a convex set, $(t_0, t_1) \subset S$. Therefore, $\phi$ is differentiable on $(t_0, t_1)$, and \begin{align*}
        \phi'(t) &= \langle \nabla h(tx), x \rangle - pt^{p-1} h(x) \\ 
        &= \frac{1}{t} \langle \nabla h(tx), tx \rangle - pt^{p-1} h(x) \\
        &= \frac{1}{t} p h(tx) - pt^{p-1} h(x) = \frac{p}{t} \phi(t)
    \end{align*} holds for all $t \in (t_0, t_1)$. This differential equation has the solution $\phi(t) = C t^{p}$, for any $C \in \R$. The condition $\phi(1) = 0$ gives that $C = 0$ and so it follows that $h$ is positively homogeneous of degree $p$ over $\Int(\dom h)$.
\end{proof}

\begin{prp} \label{prop::perfect_symmetry}
Let $h: \R^n \to \Rext$ be a Legendre function. Then $\alpha(h) = 1$ if and only if $h$ is a strictly convex quadratic function with $\dom h = \R^n$.
\end{prp}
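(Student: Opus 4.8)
The plan is to prove the two implications separately. The "if" direction is the easy one: if $h = \frac{1}{2}\langle x, Qx\rangle + \langle b, x\rangle + c$ with $Q \in \mathbb{S}^n_{++}$, then a direct computation gives $D_h(x,y) = \frac{1}{2}\langle x-y, Q(x-y)\rangle$, which is symmetric in $x$ and $y$, hence $D_h(x,y) = D_h(y,x)$ for all $x, y$, so the ratio in \eqref{eq::definition_of_symmetry_coefficient} is identically $1$ and $\alpha(h) = 1$.

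For the "only if" direction, suppose $h$ is Legendre with $\alpha(h) = 1$. First I would argue that $\dom h$ must be open: by \cite[Proposition 2]{bauschke2017descent}, a Legendre function without open effective domain has $\alpha(h) = 0$, so $\alpha(h) = 1$ forces $\dom h$ open, i.e. $\dom h = \Int(\dom h)$ and $h$ is differentiable on all of $\dom h$. The condition $\alpha(h) = 1$ means $D_h(x,y) = D_h(y,x)$ for all $x, y \in \dom h$. Writing this out, $h(x) - h(y) - \langle \nabla h(y), x-y\rangle = h(y) - h(x) - \langle \nabla h(x), y-x\rangle$, which rearranges to the symmetry identity
\begin{equation*}
    2\big(h(x) - h(y)\big) = \langle \nabla h(x) + \nabla h(y), x - y\rangle \qquad \text{for all } x, y \in \dom h.
\end{equation*}
The goal is to deduce from this that $\nabla h$ is an affine map, which (together with $\nabla h$ being the gradient of a strictly convex function) forces $h$ to be a strictly convex quadratic.

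The main work is extracting affineness of $\nabla h$ from the symmetry identity. The approach I would take is to fix $y = x_0 \in \dom h$ (WLOG a point such that a neighborhood lies in $\dom h$) and differentiate the identity in $x$: this gives $2\nabla h(x) = \nabla h(x) + \nabla h(x_0) + (\nabla^2 h(x))(x - x_0)$ at points of twice-differentiability, i.e. $\nabla h(x) - \nabla h(x_0) = (\nabla^2 h(x))(x-x_0)$ — but twice-differentiability is not available a priori under the bare Legendre hypothesis, which is exactly the subtlety the authors advertise in \labelcref{R2}. To avoid assuming $C^2$, a cleaner route: pick any $u \in \R^n$ and any $x \in \dom h$ with $x \pm tu \in \dom h$ for small $t$; apply the symmetry identity to the pairs $(x+tu, x)$ and $(x, x-tu)$ and combine. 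Each gives $2(h(x \pm tu) - h(x)) = \pm\langle \nabla h(x \pm tu) + \nabla h(x), tu\rangle$; subtracting the second from the first and using the definition of $D_h$ yields a relation forcing $\langle \nabla h(x+tu) - 2\nabla h(x) + \nabla h(x-tu), u\rangle$ to vanish to second order, i.e. the directional second difference of $h$ along $u$ is linear in $t$. Running this over all directions $u$ shows every one-dimensional restriction $t \mapsto h(x + tu)$ is a quadratic polynomial in $t$. A function whose restriction to every line is a polynomial of degree $\le 2$ is a quadratic polynomial on $\R^n$ (standard: the degree-$\le 2$ condition on lines, plus continuity, gives that $h$ agrees with its second-order Taylor polynomial). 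Hence $h(x) = \frac{1}{2}\langle x, Qx\rangle + \langle b, x\rangle + c$ for some symmetric $Q$, and since $h$ is Legendre — in particular strictly convex on $\dom h = \R^n$ — we must have $Q \in \mathbb{S}^n_{++}$.

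The step I expect to be the main obstacle is the one just described: passing from the functional equation $2(h(x) - h(y)) = \langle \nabla h(x) + \nabla h(y), x-y\rangle$ to "$h$ is quadratic" \emph{without} assuming $C^2$ smoothness — i.e. making the line-restriction argument rigorous using only differentiability of $h$ on the open set $\dom h$. One has to be careful that $\dom h = \R^n$ (or at least that every line segment in $\dom h$ can be used), which is why establishing openness of $\dom h$ first, via \cite{bauschke2017descent}, is essential; in fact openness plus convexity of $\dom h$ and the line-polynomial property should give $\dom h = \R^n$ outright, since a nonconstant quadratic restricted to a bounded segment does not blow up, contradicting essential smoothness unless there is no boundary. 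I would organize the final proof as: (1) reduce to $\dom h$ open; (2) derive the symmetry identity; (3) show each line restriction is quadratic; (4) conclude $h$ is a global quadratic polynomial; (5) invoke strict convexity and essential smoothness to get positive definiteness and $\dom h = \R^n$.
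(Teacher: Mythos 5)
Your proposal is correct in outline but follows a genuinely different route from the paper's. The paper normalizes via \cref{basic proposition} so that $0 \in \Int(\dom h)$, $h(0)=0$, $\nabla h(0)=0$, uses $D_h(x,0)=D_h(0,x)$ to obtain $2h(x)=\langle\nabla h(x),x\rangle$, invokes the converse of Euler's theorem (\cref{thm::converse of Euler}) to conclude that $h$ is positively homogeneous of degree $2$, gets $\dom h=\R^n$ from \cref{pos hom implies domain}, and then uses the symmetry identity once more to obtain $\langle\nabla h(x),y\rangle=\langle\nabla h(y),x\rangle$, from which linearity of $\nabla h$ and hence quadraticity follow; no $C^2$ assumption enters anywhere. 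Your route avoids homogeneity entirely: from the identity $2\bigl(h(x)-h(y)\bigr)=\langle\nabla h(x)+\nabla h(y),x-y\rangle$ you want each line restriction $t\mapsto h(x+tu)$ to be a quadratic polynomial and then invoke the classical ``quadratic on every line implies quadratic'' fact; your argument for $\dom h=\R^n$ (an affine gradient stays bounded along sequences approaching a finite boundary point, contradicting essential smoothness) is a legitimate alternative to \cref{pos hom implies domain}. What the paper's route buys is self-containedness (its own Euler and domain lemmas); what yours buys is bypassing the homogeneity machinery at the cost of importing a classical functional-equation fact.

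Two steps need tightening before this is a proof. First, combining only the pairs $(x+tu,x)$ and $(x,x-tu)$ does \emph{not} force any second-order vanishing: the subtracted relation is also satisfied after adding to $h$ an arbitrary function odd about $x$ along the line, so the inference as you state it fails. You must also use the pair $(x+tu,x-tu)$; adding the first two identities and comparing with the third yields exactly $\langle\nabla h(x+tu)-2\nabla h(x)+\nabla h(x-tu),u\rangle=0$ for all admissible $x$, $u$, $t$. Since $\nabla h$ is continuous on the open domain (gradient of a differentiable convex function), this Jensen-type equation makes $t\mapsto\langle\nabla h(x+tu),u\rangle$ affine on each segment, hence each line restriction of $h$ quadratic --- so the gap is fillable within your framework, but not by the computation you describe. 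Second, the multivariate step ``continuous and quadratic on every line segment of an open convex set implies a quadratic polynomial'' is true but is a Fr\'echet-type result (vanishing third differences plus continuity); it should be cited or proved rather than justified by ``agrees with its second-order Taylor polynomial'', since twice differentiability is not yet available at that point. Finally, include the one-line justification that $\alpha(h)=1$ gives $D_h(x,y)=D_h(y,x)$ for all interior $x\neq y$: the two ratios are reciprocal and both at least $1$.
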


\begin{proof}
    Suppose that $h$ is a strictly convex quadratic with $\dom h = \R^n$, i.e., that there exists some $Q \in \sym^n_{++}$, $b \in \R^n$ and $c \in \R$ such that
    $$h(x) = \frac{1}{2}\langle x, Qx \rangle + \langle b, x \rangle + c$$ holds for all $x \in \R^n$. By \cref{basic proposition}, the symmetry coefficient of $h$ equals the symmetry coefficient of $\frac{1}{2}\norm{\cdot}_2^2 : \R^n \to \R$, which has the associated Bregman distance \begin{align*}
    D_{\frac{1}{2}\norm{\cdot}_2^2}(x, y) &= \frac{1}{2}\norm{x}_2^2 - \frac{1}{2}\norm{y}_2^2 - \langle y, x-y\rangle = \frac{1}{2} \norm{x-y}_2^2
    \end{align*} for all $x, y \in \R^n$. Since $D_{\frac{1}{2}\norm{\cdot}_2^2}$ is symmetric about its arguments, we get from \eqref{eq::definition_of_symmetry_coefficient} that $\alpha\left(\frac{1}{2}\norm{\cdot}_2^2\right) = 1$.

     Now, suppose that $\alpha(h) = 1$. By \cref{basic proposition}, we may without loss of generality suppose that $0 \in \Int \dom h$, $h(0) = 0$, and $\nabla h(0) = 0$. We aim to show that there exists some $Q \in \sym^n_{++}$ such that $h(x) = \frac{1}{2}\langle x, Qx \rangle$ holds for all $x \in \R^n$. By assumption, \begin{equation}\label{eq::euler_homogeneous}
     D_h(x, y) = D_h(y, x)
     \end{equation} holds for all $x, y \in \Int(\dom h)$. We get that \begin{align*}
    2h(x) -  \langle \nabla h(x), x \rangle &= \left(h(x) - h(0) - \langle \nabla h(0), x\rangle\right) - \left( h(0) - h(x) - \langle \nabla h(x), -x\rangle \right)\\
    &= D_h(x, 0) - D_h(0, x) = 0
    \end{align*} holds for all $x \in \Int(\dom h)$, where we in the last equality used \eqref{eq::euler_homogeneous} for $y = 0$. Therefore, 
    \begin{equation}\label{eq::poshom2}
        h(x) = \frac{1}{2}\langle \nabla h(x), x \rangle
    \end{equation} holds for all $x \in \Int(\dom h)$. By \cref{thm::converse of Euler}, $h$ is positively homogeneous of degree 2 over the set $\Int(\dom h)$. By \cref{pos hom implies domain}, $\dom h = \R^n$. Therefore, \begin{align*}
        \langle \nabla h(x), y \rangle  -\langle \nabla h(y) , x \rangle &= \left(\frac{1}{2}\langle \nabla h(x), x\rangle - \frac{1}{2}\langle \nabla h(y), y\rangle - \langle \nabla h(y), x-y \rangle\right) \\ &\quad - \left(\frac{1}{2}\langle \nabla h(y), y\rangle - \frac{1}{2}\langle \nabla h(x), x\rangle - \langle \nabla h(x), y-x \rangle\right) \\
        & = \left(h(x) - h(y) - \langle \nabla h(y), x-y \rangle\right) - \left(h(y) - h(x) - \langle \nabla h(x), y-x \rangle\right) \\
        &= D_h(x, y) - D_h(y, x) = 0
    \end{align*}  holds for all $x, y \in \R^n$, where we in the second equality used \eqref{eq::poshom2} and in the last equality used \eqref{eq::euler_homogeneous}. Therefore, 
    \begin{equation}\label{eq::poshom2 identity}
        \langle \nabla h(x), y \rangle = \langle \nabla h(y) , x \rangle
    \end{equation} holds for all $x, y \in \R^n$. In particular, $$\langle \nabla h(\lambda_1 x_1 + \lambda_2 x_2), y \rangle = \langle \lambda_1 x_1 + \lambda_2 x_2, \nabla h(y) \rangle = \langle\lambda_1  \nabla h(x_1) + \lambda_2 \nabla h(x_2), y\rangle$$ holds for all $x_1, x_2, y \in \R^n$ and $\lambda_1, \lambda_2 \in \R$. Setting $$y = \nabla h(\lambda_1 x_1 + \lambda_2 x_2)-(\lambda_1  \nabla h(x_1) + \lambda_2 \nabla h(x_2))$$ gives that $$\langle \nabla h(\lambda_1 x_1 + \lambda_2 x_2) - (\lambda_1  \nabla h(x_1) + \lambda_2 \nabla h(x_2)), y \rangle = \norm{y}_2^2 = 0,$$ i.e., $\nabla h$ is linear. Since $\nabla h$ is also continuous \cite[Corollary 17.43]{bauschke2011convex} we have that $h(x) = \frac{1}{2}\langle x, Qx\rangle$ for some $Q \in \mathbb{S}^{n}$ \cite[Proposition 2.58]{bauschke2011convex}. Since $h$ is strictly convex, ${Q \in \mathbb{S}_{++}^n.}$
\end{proof}

\subsection*{The Symmetry Coefficient of Sums of Positively Homogeneous Functions (R3)}

Finally, we turn to proving \labelcref{R3}.

\begin{prp} \label{Summation of Positively Hom}
    Let the function $h_i : \R^n \to \R\cup \{\infty\}$ be Legendre and positively homogeneous of degree $p_i \in (1, \infty)$ over $\Int(\dom h_i)$, for each $i \in \{ 1, \dots, m\}$. Suppose that $\{p_i\}_{i = 1}^m$ is a sequence satisfying $p_1 < p_2 \leq \dots \leq p_{m-1} < p_m$. Then, $h = \sum_{i = 1}^m h_i$ is a Legendre function and \begin{equation} \label{eq:pos_hom_bound}
        \alpha(h) \in \left[\min_{i \in\{1, \dots, m\}} \{ \alpha(h_i)\}, \ \min\{ \alpha(h_1), \alpha(h_m)\}\right].
    \end{equation}
\end{prp}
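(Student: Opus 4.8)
The lower bound in \eqref{eq:pos_hom_bound} comes essentially for free. By \cref{pos hom implies domain}, each $h_i$ satisfies $\dom h_i = \R^n$, so $\cap_{i=1}^m \Int(\dom h_i) = \R^n$ is non-empty, and \cref{General Symmetry Lower Bound} applies directly: it gives that $h = \sum_{i=1}^m h_i$ is a Legendre function and that $\alpha(h) \geq \min_{i \in \{1,\dots,m\}} \{\alpha(h_i)\}$. It remains to establish $\alpha(h) \leq \min\{\alpha(h_1), \alpha(h_m)\}$.

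For the upper bound the plan is to exploit how the Bregman distance scales when both arguments are dilated by the same factor. By \cref{cor::domains}, each $h_i$ is differentiable with $\dom h_i = \R^n$ and each $D_{h_i}$ is positively homogeneous of degree $p_i$ on $\R^n \times \R^n$; consequently, for any fixed $x, y \in \R^n$ with $x \neq y$ and any $\lambda \in \R_{++}$,
\[
\frac{D_h(\lambda x, \lambda y)}{D_h(\lambda y, \lambda x)} = \frac{\sum_{i=1}^m \lambda^{p_i} D_{h_i}(x, y)}{\sum_{i=1}^m \lambda^{p_i} D_{h_i}(y, x)}.
\]
Since $\alpha(h)$ is the infimum in \eqref{eq::definition_of_symmetry_coefficient} over all admissible pairs, it is bounded above by the right-hand side for every $\lambda \in \R_{++}$, and hence by any limit of that expression. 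Dividing numerator and denominator by $\lambda^{p_1}$ and letting $\lambda \downarrow 0$, all terms with index $i \geq 2$ vanish because $p_1 < p_i$ (and $D_{h_1}(y,x) > 0$ since $h_1$ is Legendre and $x \neq y$), so the limit equals $D_{h_1}(x,y)/D_{h_1}(y,x)$; taking the infimum over $x \neq y$ yields $\alpha(h) \leq \alpha(h_1)$. Symmetrically, dividing by $\lambda^{p_m}$ and letting $\lambda \to \infty$, the terms with index $i \leq m-1$ vanish because $p_i < p_m$, the limit equals $D_{h_m}(x,y)/D_{h_m}(y,x)$, and infimizing gives $\alpha(h) \leq \alpha(h_m)$. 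Combining, $\alpha(h) \leq \min\{\alpha(h_1), \alpha(h_m)\}$.

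I do not anticipate a genuine obstacle: the only points requiring care are verifying that the two one-parameter limits exist with the stated values --- which is exactly where the strict inequalities $p_1 < p_2$ and $p_{m-1} < p_m$ enter, any equalities among the intermediate degrees being harmless since those terms are dominated at both ends --- and the elementary observation that an infimum is no larger than any limit of the quantities over which it is taken, here applied to the family $\lambda \mapsto D_h(\lambda x, \lambda y)/D_h(\lambda y, \lambda x)$ for fixed $x \neq y$.
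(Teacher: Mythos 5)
Your proposal is correct and follows essentially the same route as the paper: full domains via \cref{pos hom implies domain}/\cref{cor::domains}, the dilation limits $\lambda \downarrow 0$ and $\lambda \to \infty$ of $D_h(\lambda x,\lambda y)/D_h(\lambda y,\lambda x)$ for the upper bound, and \cref{General Symmetry Lower Bound} for the lower bound (which is what the paper's proof intends to cite, despite its self-referential citation). Your explicit remark that only the strict inequalities $p_1<p_2$ and $p_{m-1}<p_m$ are needed, intermediate ties being harmless, matches the paper's argument exactly.
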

\begin{proof}
    By \cref{cor::domains}, $\dom h_i = \R^n$ holds for each $i \in \{1, \dots, m \}$ and so $h$ has full domain and is a Legendre function. Let $x, y \in \R^n$ be such that $x \neq y$, then 
    \begin{align} \label{eq::summation of positively hom 1}
        \lim_{\lambda \downarrow 0}\frac{D_h(\lambda x, \lambda y)}{D_h(\lambda y, \lambda x)} &= \lim_{\lambda \downarrow 0}\frac{\sum_{i = 1}^m \lambda^{p_i}D_{h_i}(x, y)}{\sum_{i = 1}^m \lambda^{p_i}D_{h_i}(y, x)} \nonumber \\
        &=\lim_{\lambda \downarrow 0} \frac{D_{h_1}(x, y) + \sum_{i = 2}^m \lambda^{p_i-p_1}D_{h_i}(x, y)}{D_{h_1}(y, x) + \sum_{i = 2}^m \lambda^{p_i-p_1}D_{h_i}(y, x)} = \frac{D_{h_1}(x, y)}{D_{h_1}(y, x)},
    \end{align} where the first equality follows from \cref{cor::domains} and the third equality follows from the fact that $p_i - p_1 > 0$ holds for all $i \in \{1, \dots, m\}$. Similarly, \begin{equation} \label{eq::summation of positively hom 2}
        \lim_{\lambda \to \infty}\frac{D_h(\lambda x, \lambda y)}{D_h(\lambda y, \lambda x)} 
        =\lim_{\lambda \to \infty} \frac{D_{h_m}(x, y) + \sum_{i = 1}^{m-1} \lambda^{p_i-p_m}D_{h_i}(x, y)}{D_{h_m}(y, x) + \sum_{i = 1}^{m-1} \lambda^{p_i-p_m}D_{h_i}(y, x)} = \frac{D_{h_m}(x, y)}{D_{h_m}(y, x)}.
    \end{equation} Combining \eqref{eq::summation of positively hom 1} and \eqref{eq::summation of positively hom 2} gives that $\alpha(h) \leq \min\{\alpha(h_1), \alpha(h_m)\}$. The lower bound of $\alpha(h)$ in \eqref{eq:pos_hom_bound} is given by \cref{Summation of Positively Hom}.
\end{proof}
\begin{cor} \label{cor::2 functions}
    Let the functions $h_1, h_2 : \R^n \to \R \cup \{\infty\}$ be Legendre and positively homogeneous of degree $p_1 \in (1, \infty)$ and $p_2 \in (1, \infty)$ over $\Int(\dom h_1)$ and $\Int(\dom h_2)$, respectively. Suppose that $p_1 \neq p_2$. Then, $h_1 + h_2$ is a Legendre function and $$\alpha(h_1 + h_2) = \min\{\alpha(h_1), \alpha(h_2)\}.$$
\end{cor}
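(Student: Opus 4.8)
The plan is to obtain this as an immediate specialization of \cref{Summation of Positively Hom} to the case $m = 2$. First I would note that the hypotheses are symmetric in $h_1$ and $h_2$, so after possibly swapping the two functions we may assume $p_1 < p_2$; here the strict inequality is exactly what the assumption $p_1 \neq p_2$ provides. With this ordering, the sequence $(p_1, p_2)$ satisfies the chain condition $p_1 < p_2$ required by \cref{Summation of Positively Hom} (the middle part $p_2 \leq \dots \leq p_{m-1}$ is vacuous when $m = 2$), and each $h_i$ is Legendre and positively homogeneous of degree $p_i \in (1, \infty)$ over $\Int(\dom h_i)$, so the proposition applies verbatim.

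Next I would simply read off the conclusion. \cref{Summation of Positively Hom} asserts that $h_1 + h_2$ is a Legendre function and that
\begin{equation*}
    \alpha(h_1 + h_2) \in \left[\min\{\alpha(h_1), \alpha(h_2)\}, \ \min\{\alpha(h_1), \alpha(h_2)\}\right],
\end{equation*}
since with $m = 2$ the index $m$ in the upper bound $\min\{\alpha(h_1), \alpha(h_m)\}$ is just $2$, and $\min_{i \in \{1,2\}}\{\alpha(h_i)\} = \min\{\alpha(h_1), \alpha(h_2)\}$ as well. A degenerate interval forces $\alpha(h_1 + h_2) = \min\{\alpha(h_1), \alpha(h_2)\}$, which is the claim; and this value is manifestly unaffected by the relabeling, so the restriction $p_1 < p_2$ imposed for convenience costs nothing.

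There is essentially no obstacle here: the only point that requires a word is that the hypothesis $p_1 \neq p_2$ (rather than merely $p_1 \leq p_2$) is genuinely needed, because \cref{Summation of Positively Hom} derives the upper bound from the limits \eqref{eq::summation of positively hom 1} and \eqref{eq::summation of positively hom 2}, which isolate the lowest- and highest-degree terms and hence require the extreme degrees to be strictly separated from the rest. When $p_1 = p_2$ the two functions sum to another positively homogeneous function of that common degree and the bracket in \eqref{eq:pos_hom_bound} need not collapse, so the equality in the corollary can fail; flagging this is the only subtlety worth including in the write-up.
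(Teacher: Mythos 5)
Your argument is correct and is essentially the paper's own proof: the paper likewise disposes of this corollary by applying \cref{Summation of Positively Hom} with $m = 2$, where the interval in \eqref{eq:pos_hom_bound} degenerates to the single value $\min\{\alpha(h_1), \alpha(h_2)\}$. Your additional remarks on the symmetry-based relabeling and on why $p_1 \neq p_2$ is needed (cf.\ \cref{rem::necessity of strict degrees}) are accurate but not required for the proof itself.
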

\begin{proof}
    Apply \cref{Summation of Positively Hom} with $m = 2$.
\end{proof}
\begin{rem} \label{rem::necessity of strict degrees}
    We will now argue the necessity of the condition $p_1 \neq p_2$ in \cref{cor::2 functions}. Let $a, b \in \R_{++}$ and consider the function $h_{a, b} : \R \to \R$ defined by $$h_{a, b}(x) = \begin{cases}
ax^2 &\text{ if } x \geq 0,\\
bx^2 &\text{ otherwise,}
\end{cases}$$ for all $x \in \R$.  The function $h_{a, b}$ is Legendre and positively homogeneous of degree 2. Since $h_{a, b}(x) = h_{b, a}(-x)$ holds for all $a, b \in \R_{++}$ and $x \in \R$, it follows from \cref{basic proposition} that $\alpha(h_{a, b}) = \alpha(h_{b, a})$. If $a \neq b$, then \cref{prop::perfect_symmetry} gives that $\alpha(h_{a, b}) < 1$ and also that $\alpha(h_{a, b} + h_{b, a}) = \alpha(h_{a+b, a+b}) = 1$. So letting $h_1 \coloneqq h_{a, b}$ and $h_2 \coloneqq h_{b, a}$ we have that both $h_1$ and $h_2$ are Legendre functions that are positively homogeneous of degree 2, but $$\alpha(h_1 + h_2) > \min\{\alpha(h_1), \alpha(h_2)\}.$$ In fact, it can be verified---with a similar method as will be introduced in the next section---that \begin{equation} \label{eq::alpha h_ab}
    \alpha(h_{a, b}) = \min \left\{ \frac{1 + \sqrt{a/b}}{1 + \sqrt{b/a}}, \frac{1 + \sqrt{b/a}}{1 + \sqrt{a/b}}\right\}
\end{equation} holds for all $a, b \in \R_{++}$.
\end{rem}

\begin{exmp}
 Under the hypothesis of \cref{Summation of Positively Hom} it is not in general possible to improve the interval in \eqref{eq:pos_hom_bound} to a point estimate, as illustrated by the following example.
 
 Let $h \coloneqq \frac{1}{p}|\cdot|^p : \R \to \R$ for $p = 4$. It was shown  in \cite[Section 2.3]{bauschke2017descent} that $h$ has symmetry coefficient $\alpha(h) = 2 - \sqrt{3}$. The conjugate function $h^*$ has by \cref{basic proposition} the same symmetry coefficient, $\alpha(h^*) = \alpha(h)$, while being positively homogeneous of degree $p/(p-1) = 4/3$ \cite[Section 12]{rockafellar1970convex}. Let $b \in \R_{++}$ and consider the function $h_{1, b}$, following the notation of \cref{rem::necessity of strict degrees}, which is positively homogeneous of degree $2$. From \eqref{eq::alpha h_ab} we get that $\lim_{b \downarrow 0} \alpha(h_{1, b}) = 0$ and the lower bound of \eqref{eq:pos_hom_bound} approaches 0. Furthermore, one can verify that when $b = 10^{-7}, x = 10^{-3}$ and $y = -5\cdot 10^{-2}$, then $$\frac{D_{h^* + h_{1, b} + h}( x,  y)}{D_{h^* + h_{1, b} + h}( y, x)} < 0.2676 <  \min\{ \alpha(h^*), \alpha(h)\} = 2 - \sqrt{3}.$$
\end{exmp}

 \cref{Summation of Positively Hom} leads us to consider if there exist additional assumptions under which \begin{equation}\label{natural question}
\min_{i \in \{1, \dots, m\}} \{ \alpha(h_i)\} = \min\{ \alpha(h_1), \alpha(h_m)\}\end{equation} in the setting of \cref{Summation of Positively Hom} and thus the symmetry coefficient of $\alpha(h_1 + \dots + h_m)$ is completely known. This is the topic of investigation in the following two sections, where it turns out that \eqref{natural question} holds for sums of functions of the form $\lambda_i\norm{\cdot}_2^{p_i} : \R^n \to \R$, where $\lambda_i > 0$ and $p_i > 1$.

\section[]{The Symmetry Coefficient of $|\cdot|^p$}\label{sym 1d}
In this section we will consider the symmetry coefficient of the one-dimensional Legendre and positively homogeneous function $|\cdot|^p : \R \to \R$ for some $p > 1$. Central to this study is the function $f_p : \R \to \R$ defined by 
    \begin{equation} \label{definition of f_p}
    f_p(u) \coloneqq \begin{cases}
        \frac{|u|^p - pu + (p - 1)}{(p-1)|u|^p - p \sgn(u)|u|^{p-1} + 1} &\text{ if } u \neq 1, \\
        1 &\text{ if } u = 1, \\
    \end{cases}\end{equation} for all $u \in \R$. Moreover, when $p > 2$ we will consider the function $g_p : [0, 1] \to \R$ defined by
    \begin{equation}\label{defintion of g_p}
        g_p(u) \coloneqq u^{2(p-1)} - (p-1)^2 u^p - 2p(p-2)u^{p-1} - (p-1)^2 u^{p-2} + 1
    \end{equation} for all $u \in [0, 1]$. In fact, we will show in \cref{prp::first proposition of p norms} that computing $\alpha(|\cdot|^p)$ is equivalent to computing the extremum points of $f_p$. Furthermore, when $p > 2$ this is equivalent to computing the roots of the function $g_p$ over its domain $[0, 1]$. Even though $f_p$ is generally non-convex and non-concave, we will in \cref{technical lemma} show that if $p > 2$, then $f_p$ is quasiconcave on $[-1, 1]$ and is uniquely maximized over $[-1, 1]$ by some point in $(-1, 0)$. This lets us recast the non-convex and non-concave problem of naively computing $\alpha(|\cdot|^p)$ via \eqref{eq::definition_of_symmetry_coefficient} to a one-dimensional quasiconcave problem over a compact interval, as implemented in \cref{alg:compute symmetry for 1d norm}.

    Not only will the study of $f_p$ lead to an efficient way of computing $\alpha(|\cdot|^p)$, it will also let us show that the function $\psi : (1, \infty) \to (0, 1]$ defined by 
    $
    \psi(p) = \alpha(|\cdot|^p)
    $, for all $p \in (1, \infty)$, is strictly increasing on the interval $(1, 2)$ and strictly decreasing on the interval $(2, \infty)$. This is shown in \cref{Monotonically Decreasing Symmetry Coefficient}. When combining this with the bounds found in \cref{Summation of Positively Hom}, we arrive at a complete characterization of what $\alpha\left( \sum_{i = 1}^m |\cdot|^{p_i} \right)$ equals, which will be shown in \cref{prp::answering-proposition to question}.

\begin{prp} \label{prp::first proposition of p norms}
    Let $p > 1$ and let $f_p : \R \to \R$ as in \eqref{definition of f_p}. Then, the denominator in \eqref{definition of f_p} is positive, i.e., $$(p-1)|u|^p - p \sgn(u)|u|^{p-1} + 1 > 0$$ holds for all $u \in \R \setminus \{1\}$. Furthermore, $f_p$ is continuous, differentiable on $(-1, 1)$, positive, and satisfies $f_p(u^{-1}) = f_p(u)^{-1}$ for all $u \neq 0$. Moreover, the symmetry coefficient of $|\cdot|^p$ is given by \begin{equation}\label{eq::compact domain}\alpha(|\cdot|^p) = \min \biggl\{\min\bigl\{f_p(u), f_p(u)^{-1}\bigr\} \mid u \in [-1, 1]\biggr\}.\end{equation}
\end{prp}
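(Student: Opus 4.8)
The plan is to reduce the two-variable infimum in \eqref{eq::definition_of_symmetry_coefficient} to a one-variable problem by exploiting the positive homogeneity of $D_{|\cdot|^p}$, and then to verify the claimed analytic properties of $f_p$ directly.

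First I would record that $h = |\cdot|^p : \R \to \R$ is Legendre and, by \cref{cor::domains}, positively homogeneous of degree $p$; in particular $h'(x) = p\,\sgn(x)|x|^{p-1}$ and $D_h$ is positively homogeneous of degree $p$ on $\R \times \R$. Given $x, y \in \R$ with $x \neq y$, I split into cases according to whether $y = 0$, $x = 0$, or both are nonzero. If $y \neq 0$, then using homogeneity with $\lambda = 1/|y|$ we may normalise to $|y| = 1$, and after a further sign reflection (allowed by \cref{basic proposition}, since $|\cdot|^p$ is even) we may take $y = 1$; writing $u = x$, a direct computation gives
\begin{equation*}
 \frac{D_h(x, y)}{D_h(y, x)} = \frac{|u|^p - pu + (p-1)}{(p-1)|u|^p - p\,\sgn(u)|u|^{p-1} + 1} = f_p(u),
\end{equation*}
which is exactly the numerator/denominator in \eqref{definition of f_p} after dividing through by $p$ in $D_h(y,x) = h(y) - h(x) - h'(x)(y-x)$. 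The case $x \neq 0 = y$ is the reciprocal situation and yields $f_p(u)^{-1}$ with $u = y/x$ (or, more cleanly, follows from the identity $f_p(u^{-1}) = f_p(u)^{-1}$ once that is established). The degenerate case $x = y = 0$ is excluded since $x \neq y$; and the value $u = 1$ corresponds to $x = y$, which is why $f_p(1)$ is defined by the continuous-extension value $1$ (an L'Hôpital computation: both numerator and denominator vanish to first order at $u = 1$ with equal derivatives $p(p-1) - p = p(p-2)$ versus $p(p-1)\cdot\text{(sign-corrected)}$, so the limit is $1$). Putting the cases together, the set of attained ratios $D_h(x,y)/D_h(y,x)$ over $x \neq y$ equals $\{f_p(u) : u \in \R\} \cup \{f_p(u)^{-1} : u \in \R\}$, so
\begin{equation*}
 \alpha(|\cdot|^p) = \inf_{u \in \R} \min\{f_p(u), f_p(u)^{-1}\}.
\end{equation*}
To cut this down to the compact interval $[-1,1]$, I use $f_p(u^{-1}) = f_p(u)^{-1}$: for $|u| > 1$ the pair $\{f_p(u), f_p(u)^{-1}\}$ equals the pair $\{f_p(u^{-1}), f_p(u^{-1})^{-1}\}$ with $|u^{-1}| < 1$, so nothing is lost by restricting to $u \in [-1, 1]$, and continuity of $f_p$ on that compact set turns the infimum into a minimum, giving \eqref{eq::compact domain}.

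The remaining analytic claims I would dispatch as follows. \emph{Positivity of the denominator:} set $\delta(u) = (p-1)|u|^p - p\,\sgn(u)|u|^{p-1} + 1$; note $\delta(u) = p\,D_h(1, u)/|?|$ — more precisely $\delta(u)$ is (up to the positive factor from clearing $p$) equal to $D_h(1,u)$ expressed via $h(1) - h(u) - h'(u)(1-u) = 1 - |u|^p - p\,\sgn(u)|u|^{p-1}(1-u)$, and since $|\cdot|^p$ is Legendre (strictly convex) $D_h(1,u) > 0$ whenever $u \neq 1$; this is cleaner than a bare-hands inequality. \emph{Positivity of $f_p$}: similarly the numerator is (up to the same factor) $D_h(u, 1) = |u|^p - 1 - p(u - 1) = |u|^p - pu + (p-1) > 0$ for $u \neq 1$, again by strict convexity. \emph{Continuity and differentiability on $(-1,1)$}: away from $u = 1$ both numerator and denominator are smooth and the denominator is nonzero, so $f_p$ is smooth there; on $(-1,1)$ the point $u=1$ is not in the open interval, so differentiability is immediate; continuity at $u = 1$ follows from the L'Hôpital limit computed above. \emph{The reciprocal identity} $f_p(u^{-1}) = f_p(u)^{-1}$: substitute $u \mapsto 1/u$ in \eqref{definition of f_p} and multiply numerator and denominator by $|u|^p$; one checks the numerator of $f_p(1/u)$ times $|u|^p$ equals the denominator of $f_p(u)$ and vice versa (this is the homogeneity-duality $D_h(x,y)/D_h(y,x)$ against $D_h(y,x)/D_h(x,y)$ seen symbolically). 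I expect this last algebraic identity, together with getting all the $\sgn$ bookkeeping right for negative $u$, to be the only slightly fiddly part; everything else is a clean consequence of strict convexity of $|\cdot|^p$ and homogeneity of its Bregman distance.
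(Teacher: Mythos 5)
Your proposal follows essentially the same route as the paper's proof: identify the numerator and denominator of $f_p$ with $D_{|\cdot|^p}(u,1)$ and $D_{|\cdot|^p}(1,u)$ so that positivity follows from strict convexity, use the degree-$p$ homogeneity of the Bregman distance to get $f_p(u^{-1}) = f_p(u)^{-1}$, handle $y=0$ separately, and combine the reciprocal identity with continuity to pass from the infimum over $\R$ to the minimum over $[-1,1]$; this is the paper's argument up to cosmetic differences (normalizing to $y=1$ versus dividing by $|y|^p$). One detail needs fixing: at $u=1$ the numerator $N(u)=u^p-pu+(p-1)$ and denominator $\Delta(u)=(p-1)u^p-pu^{p-1}+1$ vanish to \emph{second} order, since $N'(1)=\Delta'(1)=0$ and $N''(1)=\Delta''(1)=p(p-1)$, so a single application of l'H\^opital is not enough (the paper applies it twice), and your claimed common first derivative $p(p-1)-p$ is incorrect, although the limit is indeed $1$. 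Also, no factor of $p$ needs to be cleared anywhere: the numerator and denominator of $f_p$ are exactly $D_{|\cdot|^p}(u,1)$ and $D_{|\cdot|^p}(1,u)$.
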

\begin{proof}
    First, note that $|\cdot|^p : \R \to \R$ is proper, closed, differentiable on $\R$ and strictly convex, and so it is a Legendre function. Let $x, y \in \R^n$ such that $x \neq y$, then we have that 
    \begin{align} \label{eq::f(u) calculation}
    \frac{D_{|\cdot|^p}(x, y)}{D_{|\cdot|^p}(y, x)} &= \frac{|x|^p - |y|^p - p \sgn(y) |y|^{p-1}(x-y)}{|y|^p - |x|^p - p \sgn(x) |x|^{p-1}(y-x)}\nonumber \\
    &= \frac{|x|^p + (p-1)|y|^p - p\sgn(y)|y|^{p-1}x}{(p-1)|x|^p + |y|^p - p\sgn(x)|x|^{p-1}y} \nonumber\\
    &= \begin{cases}
        \frac{|u|^p - pu + (p - 1)}{(p-1)|u|^p - p \sgn(u)|u|^{p-1} + 1} &\text{ if } y \neq 0 \text{ and } u \coloneqq x/y, \\
        (p-1)^{-1} &\text{ if } y = 0, \\
    \end{cases}
    \end{align} where the last equality for $y \neq 0$ is obtained by dividing the numerator and denominator by $|y|^p$ and simplifying.
    By the strict convexity of $|\cdot|^p$, we have that $D_{|\cdot|^p}(y, x) \in \R_{++}$ holds for all $x, y \in \R$ such that $x \neq y$, and since the first of the two expressions in \eqref{eq::f(u) calculation} equals $f_p(u)$ for $u \neq 1$, we have that the denominator in \eqref{definition of f_p} is positive,
    
    Two uses of l'Hospital's rule give that 
    \begin{align*}
    \lim_{u \to 1} f_p(u) &= \lim_{u \to 1} \frac{u^p - pu + (p - 1)}{(p-1)u^p - p u^{p-1} + 1} \\
    &= \lim_{u \to 1} \frac{pu^{p-1} - p}{(p-1)pu^{p-1} - (p-1)p u^{p-2}} \\
    &= \lim_{u \to 1} \frac{(p-1)pu^{p-2}}{(p-1)^2pu^{p-2} - (p-2)(p-1)p u^{p-2}} = 1
    \end{align*} 
    and thus $f_p$ is continuous. 

    By the definition of $f_p$ in \eqref{definition of f_p} and the fact that the denominator in \eqref{definition of f_p} is positive, the function $f_p$ is differentiable on the set $(-1, 1)$. By strict convexity of $|\cdot|^p$, we have that $D_{|\cdot|^p}(x, y) > 0$ for all $x, y \in \R$ such that $x \neq y$, and so $f_p$ is positive. If $u \neq 0$, then by \eqref{eq::f(u) calculation} 
    \begin{align} \label{eq::f(u) inverse property}
        f_p(u) = \frac{D_{|\cdot|^p}(|u|, \sgn(u))}{D_{|\cdot|^p}(\sgn(u), |u|)} &= \left(\frac{D_{|\cdot|^p}(\sgn(u), |u|)}{D_{|\cdot|^p}(|u|, \sgn(u))}\right)^{-1}\nonumber \\
        &= \left(\frac{D_{|\cdot|^p}(|u|u^{-1}, |u|)}{D_{|\cdot|^p}(|u|, |u|u^{-1})}\right)^{-1}\nonumber \\
    &= \left(\frac{|u|^p D_{|\cdot|^p}(u^{-1}, 1)}{|u|^p D_{|\cdot|^p}(1, u^{-1})}\right)^{-1} = f_p(u^{-1})^{-1}
    \end{align} 
    where the fourth equality follows the fact that $D_{|\cdot|^p}$ is positively homogeneous of degree $p$, as shown in \cref{cor::domains}. Therefore, \begin{align*}
        \alpha(|\cdot|^p) &= \inf\left\{\left. \frac{D_{|\cdot|^p}(x, y)}{D_{|\cdot|^p}(y, x)} \right| x, y \in \R\text{ and } x\neq y \right \} \\
        &= \min \biggl\{\inf\bigl\{f_p(u) \mid u \in \R \setminus \{1\}\bigr\}, (p-1)^{-1}  \biggr\} \\
        &= \inf \biggl\{\min\bigl\{f_p(u), f_p(u)^{-1}\bigr\} \mid u \in \R\biggr\} \\
        &= \min \biggl\{\min\bigl\{f_p(u), f_p(u)^{-1}\bigr\} \mid u \in [-1, 1]\biggr\},
    \end{align*} where we in the second equality used \eqref{eq::f(u) calculation}, and in the third equality used that $f_p>0$ and that $f_p(0) = p-1$, and in the last equality used \eqref{eq::f(u) inverse property} combined with the fact that $f_p$ is continuous.
\end{proof}
\begin{cor} \label{cor::simple consequence of ||p}
    $\lim_{q \downarrow 1} \alpha(|\cdot|^q) = \lim_{p \to \infty}\alpha(|\cdot|^p) = 0$.
\end{cor}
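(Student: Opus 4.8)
The plan is to use the explicit formula \eqref{eq::compact domain} from \cref{prp::first proposition of p norms}, which expresses $\alpha(|\cdot|^p)$ as the minimum over $u \in [-1,1]$ of $\min\{f_p(u), f_p(u)^{-1}\}$. Since $f_p(0) = p - 1$ (as computed in the proof of \cref{prp::first proposition of p norms}), we have the bound
\begin{equation*}
    \alpha(|\cdot|^p) \leq \min\{f_p(0), f_p(0)^{-1}\} = \min\{p-1, (p-1)^{-1}\} = (p-1)^{-1}
\end{equation*}
whenever $p \geq 2$. Letting $p \to \infty$ gives $\lim_{p \to \infty} \alpha(|\cdot|^p) = 0$, since $\alpha(|\cdot|^p) \geq 0$ always by \cref{basic proposition}.

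For the limit as $q \downarrow 1$, the cleanest route is conjugate duality rather than a direct estimate on $f_q$. By \cref{basic proposition}, $\alpha(|\cdot|^q) = \alpha((|\cdot|^q)^*)$, and since $(|\cdot|^q)^*$ is a positive multiple of $|\cdot|^{q'}$ with $1/q + 1/q' = 1$ (see \cite[Section 12]{rockafellar1970convex}), \cref{basic proposition} gives $\alpha(|\cdot|^q) = \alpha(|\cdot|^{q'})$. As $q \downarrow 1$, the conjugate exponent $q' = q/(q-1) \to \infty$, so this case reduces to the one already established: $\lim_{q \downarrow 1}\alpha(|\cdot|^q) = \lim_{q' \to \infty}\alpha(|\cdot|^{q'}) = 0$.

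There is no real obstacle here; both parts are short consequences of results already in hand. The only point requiring a sentence of care is justifying $\alpha(|\cdot|^q) = \alpha(|\cdot|^{q'})$ — namely that the Legendre conjugate of $|\cdot|^q$ on $\R$ is indeed a scalar multiple of $|\cdot|^{q'}$, and that \cref{basic proposition} is invariant under such scaling. Both are immediate: the conjugate identity is classical, and invariance under $\lambda h$ is part of the statement of \cref{basic proposition}.
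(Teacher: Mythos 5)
Your proposal is correct and follows essentially the same route as the paper: bounding $\alpha(|\cdot|^p)$ by $f_p(0)^{-1} = (p-1)^{-1}$ via \cref{prp::first proposition of p norms} for the limit $p \to \infty$, and then reducing the case $q \downarrow 1$ to it through the conjugacy relation for $|\cdot|^q$ together with the invariances in \cref{basic proposition}. The only cosmetic difference is that the paper works with the normalized pair $(|\cdot|^p/p)^* = |\cdot|^q/q$ rather than observing that $(|\cdot|^q)^*$ is a positive multiple of $|\cdot|^{q'}$, which is equivalent.
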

\begin{proof}
    Let $p > 2$. By \cref{prp::first proposition of p norms}, $\alpha(|\cdot|^p) \leq f_p(0)^{-1} = (p-1)^{-1} \to 0$ as $p \to \infty$. The conjugate relation $(|\cdot|^p/p)^* = |\cdot|^q/q$, which holds for $q = p/(p-1) \in (1, 2)$, combined with \cref{basic proposition} implies that $$\alpha(|\cdot|^p) = \alpha(|\cdot|^p/p) = \alpha((|\cdot|^p/p)^*) =  \alpha(|\cdot|^q/q) = \alpha(|\cdot|^q),$$ which implies that $\lim_{q \downarrow 1} \alpha(|\cdot|^q) = 0$. 
\end{proof}

\begin{prp}\label{technical lemma}
    Let $p > 2$, $f_p : \R \to \R$ be as in \eqref{definition of f_p}, and $g_p :[0, 1] \to \R$ as in \eqref{defintion of g_p}. Then 
    \begin{enumerate}[label=(\roman*)]
        \item $f_p$ is quasiconcave on $[-1, 1]$ and is uniquely maximized over $[-1, 1]$ by a point $u_0$ satisfying $$u_0 \coloneqq \argmax_{u \in [-1, 1]} f_p(u) \in (-1, 0).$$ 
        Furthermore, $u_0$ is the unique stationary point of $f_p$ over $(-1, 0)$ and $-u_0$ is the unique root of $g_p$ over $[0, 1]$.
        \label{technical lemma i}
        \item if $q > p$ then $f_q(u) > f_p(u)$ for all $u \in (-1, 0)$.\label{technical lemma ii}
    \end{enumerate}
\end{prp}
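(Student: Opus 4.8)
The plan is to reduce everything to the algebraic function $g_p$ and then analyze $g_p$ on $[0,1]$ directly. First I would compute the derivative $f_p'(u)$ on $(-1,1)$ using the quotient rule, clear the (positive) squared denominator, and verify that the numerator of $f_p'(u)$, after factoring out elementary positive factors like powers of $|u|$ and a factor of $(u-1)$ or $(u+1)$, is a constant multiple of $g_p(-u)$ for $u \in (-1,0)$ — in other words, stationary points of $f_p$ in $(-1,0)$ correspond exactly to roots of $g_p$ in $(0,1)$. (A symmetric computation, together with the relation $f_p(u^{-1}) = f_p(u)^{-1}$ from \cref{prp::first proposition of p norms}, should show $f_p$ has no stationary point in $(0,1)$, so the maximizer over $[-1,1]$ cannot be there; and $f_p(-1)$, $f_p(1)$, $f_p(0)=p-1$ can be checked to not be maximal, ruling out the endpoints and the origin.) So the core task becomes: show $g_p$ has exactly one root in $(0,1)$, and that $f_p$ increases then decreases across it, which gives quasiconcavity on $[-1,1]$ and uniqueness of the maximizer $u_0 \in (-1,0)$.

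For the root-counting of $g_p$ on $[0,1]$, note $g_p(0) = 1 > 0$ and $g_p(1) = 1 - (p-1)^2 - 2p(p-2) - (p-1)^2 + 1 = -2(p-1)^2 - 2p(p-2) + 2 < 0$ for $p > 2$, so at least one root exists by the intermediate value theorem. For uniqueness I would show $g_p$ is strictly decreasing on $[0,1]$, or at least that it crosses zero only once; the cleanest route is probably to examine $g_p'(u)$ — after factoring out $u^{p-3}$, the bracket is a polynomial-type expression in $u$ and one can argue it is negative on $(0,1)$ by grouping terms and using $p > 2$, perhaps bounding $u^{2(p-1)}$-derivative terms against the others. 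If monotonicity of $g_p$ itself is awkward, an alternative is to divide $g_p(u)$ by $u^{p-1}$ and study $u^{p-1} + u^{-(p-1)} - (p-1)^2(u + u^{-1}) - 2p(p-2)$ as a function of $s = u + 1/u \ge 2$; writing $u^{p-1} + u^{-(p-1)}$ as a function of $s$ and differentiating should give a manifestly monotone function of $s$, hence a single sign change. The reciprocal symmetry $u \mapsto u^{-1}$ strongly suggests this $s = u + u^{-1}$ substitution is the natural coordinate.

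For part \ref{technical lemma ii}, the inequality $f_q(u) > f_p(u)$ for $u \in (-1,0)$ and $q > p$: here I would fix $u \in (-1,0)$ and differentiate $p \mapsto f_p(u)$, aiming to show $\partial_p f_p(u) > 0$. Writing $|u|^p = e^{p \ln|u|}$ with $\ln|u| < 0$, the numerator $N_p(u) = |u|^p - pu + (p-1)$ and denominator $M_p(u) = (p-1)|u|^p - p\,\sgn(u)|u|^{p-1} + 1 = (p-1)|u|^p + p|u|^{p-1} + 1$ (using $\sgn(u) = -1$) are both explicit, and $\partial_p(N_p/M_p) = (N_p' M_p - N_p M_p')/M_p^2$; one needs the numerator $N_p' M_p - N_p M_p' > 0$. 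This is a transcendental inequality in $|u| \in (0,1)$ and $p$; I expect this to be the main obstacle. A promising simplification: since $f_p(u) = D_{|\cdot|^p}(|u|, -1)/D_{|\cdot|^p}(-1, |u|)$ for $u \in (-1,0)$ after using homogeneity, one can try to interpret $f_p$ Bregman-wise and exploit convexity/monotonicity in $p$ of the underlying Bregman distances; alternatively, substitute $t = |u| \in (0,1)$, $a = -\ln t > 0$, expand $t^p = e^{-ap}$, and reduce the claim to an inequality of the form $e^{-ap}(\text{affine in }p) + (\text{affine in }p) > 0$ whose sign can be tracked by checking it and its $p$-derivative at $p = 2$ (where $f_2 \equiv 1$) and showing the $p$-derivative keeps the right sign. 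I would present the computation in the $t,a$ variables and lean on the boundary value $f_2(u) = 1$ as the anchor.
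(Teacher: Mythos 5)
Your plan for \itemcref{technical lemma}{technical lemma i} is essentially the paper's own route and is workable: on $(-1,0)$ one indeed has $f_p'(u) = -p\,g_p(-u)$ divided by the squared (positive) denominator of \eqref{definition of f_p}, and uniqueness of the root of $g_p$ in $(0,1)$ can be obtained exactly as you suggest, by showing $g_p$ is strictly decreasing there: $g_p'(u) = (p-1)u^{p-3}\bigl(2u^p - p(p-1)u^2 - 2p(p-2)u - (p-1)(p-2)\bigr)$, and the bracket is negative on $(0,1)$ because its derivative $2p\bigl(u^{p-1}-(p-1)u-(p-2)\bigr)$ is negative (use $u^{p-1}\le u < (p-1)u+(p-2)$ for $u\in(0,1)$, $p>2$) and its value at $u\downarrow 0$ is $-(p-1)(p-2)<0$; this is the same derivative cascade plus mean-value/IVT argument the paper runs with its auxiliary functions. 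One caveat: the relation $f_p(u^{-1})=f_p(u)^{-1}$ maps $(0,1)$ to $(1,\infty)$, so it does not by itself exclude stationary points of $f_p$ in $(0,1)$; for quasiconcavity on all of $[-1,1]$ your ``symmetric computation'' must actually establish that the numerator of $f_p'$ keeps one sign on $(0,1)$ (the paper shows the relevant factor is positive there, so $f_p$ is strictly decreasing on $(0,1)$). That is a needed step, but it is of the same kind as the one you already carry out on $(-1,0)$.

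For \itemcref{technical lemma}{technical lemma ii} there is a genuine gap: you state the goal $\partial_p f_p(u)>0$ and correctly identify it as the main obstacle, but no proof is given, and the proposed reduction to an inequality of the form $e^{-ap}(\text{affine in }p)+(\text{affine in }p)>0$ is not what the computation produces. Cross-multiplying $f_q(-t)>f_p(-t)$ (or differentiating the quotient in $p$) yields terms at several distinct exponential scales, e.g. $t^{p+q}$, $t^{p+q-1}$, $t^{q}$, $t^{q-1}$, $t^{p}$, $t^{p-1}$, $t$, $1$ (respectively, scales $t^{2p}$, $t^{2p-1}$, $t^{p}$, $t^{p-1}$ together with $\ln t$ factors), with coefficients that are quadratic in $p,q$; the sign of such a multi-scale expression cannot be tracked by checking the value and one $p$-derivative at $p=2$. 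Taming this expression is precisely the content of the paper's proof: it trades the cross-difference for the much simpler four-term function $g_{2,p,q}(u) = p(q-1)(1-u^{q-p+1}) + q(p-1)(u-u^{q-p})$ via an explicit identity, and then proves $g_{2,p,q}>0$ on $(0,1)$ by splitting into the easy case $q-p\ge 1$ and the delicate case $q-p\in(0,1)$, the latter by a contradiction argument at the unique zero of $g_{2,p,q}''$. Your proposal contains no substitute for this key lemma, so part (ii) remains unproven as written.
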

\begin{proof} 
\mbox{} \smallskip
\begin{enumerate}[labelindent=0pt,wide=0pt]
\item[\labelcref{technical lemma i}]
    Fix $p > 2$. By \cref{prp::first proposition of p norms}, $f_p$ is continuous on $[-1, 1]$ and differentiable on $(-1, 1)$. Let $E \coloneq (-1, 1) \setminus \{0\}$. 
    
    Consider the functions $g_{1,p}, g_{2,p}: [-1, 1] \to \R$ and $g_{3,p}: [-1, 1] \setminus \{0\} \to \R$ defined by \begin{align}
        g_{1,p}(u) &\coloneqq |u|^{2(p-1)} - (p-1)^2|u|^p + 2p(p-2) \sgn(u)|u|^{p-1}-(p-1)^2|u|^{p-2} + 1, \label{eq::g1 def}\\
        g_{2,p}(u) &\coloneqq 2|u|^p - p(p-1)u^2 + 2p(p-2)u - (p-1)(p-2), \nonumber\\
        g_{3,p}(u) &\coloneqq |u|^{p-1} - (p-1)|u| + (p-2)\sgn(u),\nonumber
    \end{align} for all $u \in [-1, 1]$, $u \in [-1, 1]$ and $u \in [-1,1] \setminus\{0\}$, respectively. It is immediate that $g_{i,p}$ is differentiable on $E$ for all $i \in \{1, 2, 3\}$. Furthermore, the following equations \begin{align} 
        f_p'(u)  &= -pg_{1,p}(u)\bigg/\biggl((p-1)|u|^{p} - p\sgn(u) |u|^{p-1} + 1 \biggr)^2, \label{lemma 1 eq 1} \\
        g_{1,p}'(u) &= (p-1)\sgn(u)|u|^{p-3}g_{2,p}(u), \label{lemma 1 eq 2} \\
        g_{2,p}'(u) &= 2p\sgn(u)g_{3,p}(u), \label{lemma 1 eq 3} \\
        g_{3,p}'(u) &= (p-1)\sgn(u)(|u|^{p-2} - 1), \label{lemma 1 eq 4}
    \end{align} can be verified to hold for all $u \in E$. Note that, in particular, $f_p'$ is continuous on $E$. This fact, combined with $f_p(-1) = f_p(1) = 1 < p-1 = f_p(0)$ and that $f_p$ is continuous on $[-1, 1]$, gives that the statement: \begin{equation}\label{Claim}
        f_p' \text{ has exactly one zero on } E
    \end{equation} implies that $f_p$ is quasiconcave on $[-1, 1]$. We proceed by showing that \eqref{Claim} is true.

    One can verify that
    \begin{align}
        g_{1,p}(-1) &= 4p(2-p) < 0, \quad g_{1,p}(0) = 1, \quad g_{1,p}(1) = 0 \label{lemma 1 eq 5}\\
        g_{2,p}(-1) &= 4p(2-p) < 0,\quad g_{2,p}(0) = -(p-1)(p-2) < 0, \quad g_{2,p}(1) = 0 \label{lemma 1 eq 6}\\
        g_{3,p}(-1) &= 2(2-p) < 0,\quad \lim_{x \uparrow 0}g_{3,p}(x) = 2-p < 0 < p - 2 = \lim_{x \downarrow 0}g_{3,p}(x), \quad g_{3,p}(1) = 0 \label{lemma 1 eq 7}
        \end{align}
        and that
    \begin{equation}
        g_{3,p}'(u) \neq 0 \text{ for all } u \in E. \label{lemma 1 eq 8}
    \end{equation} Using the mean value theorem on $g_{3,p}$ with \eqref{lemma 1 eq 7} and \eqref{lemma 1 eq 8} we get that $g_{3,p}(u) \neq 0$ for all $u \in E$. Therefore, by \eqref{lemma 1 eq 3}, $g_{2,p}'(u) \neq 0$ for all $u \in E$. Using this and the mean value theorem on $g_{2,p}$ with \eqref{lemma 1 eq 6} we get that $g_{2,p}(u) < 0$ for all $u \in E$. Therefore, by \eqref{lemma 1 eq 2}, $g_{1,p}'(u) \neq 0$ for all $u \in E$.  Using this and the mean value theorem on $g_{1,p}$ with \eqref{lemma 1 eq 5} we get that there exists a unique $u_0 \in (-1, 0)$ such that $g_{1,p}(u_0) = 0$ and that $g_{1,p}(u) > 0$ holds for all $u \in (0, 1)$. Therefore, by \eqref{lemma 1 eq 1}, \eqref{Claim} holds and $u_0 = \argmax_{u \in [-1, 1]} f_p(u) \in (-1, 0)$. Furthermore, $f_p$ is quasiconcave on $[-1, 1]$. Since $g_p(u) = g_{1, p}(-u)$ holds for all $u \in [0, 1]$, we have that $-u_0 \in (0, 1)$ is the unique root of $g_p$ over $[0, 1]$.

\item[\labelcref{technical lemma ii}]
    Fix $q > p > 2$. We will show that 
    \begin{equation}\label{Claim 2}
    f_q(-u) > f_p(-u) \text{ for all } u \in (0, 1).
    \end{equation}  
    For all $u \in (0, 1)$ we have that \begin{equation}\label{eq:fq-u}f_q(-u) = \frac{u^q + qu + (q-1)}{(q-1)u^q + qu^{q-1} + 1}.\end{equation} Consider the functions $g_{1,p,q}, g_{2,p,q} : (0, 1) \to \R$ defined by 
    \begin{align}
        g_{1,p,q}(u) &\coloneqq (u^q + qu + (q - 1))((p-1)u^p + pu^{p-1} + 1) \nonumber\\
        &\ - (u^p + pu + (p - 1))((q-1)u^q + qu^{q-1} + 1), \nonumber \\
        g_{2,p,q}(u) &\coloneqq p(q-1)(1 - u^{q-p+1}) + q(p-1)(u - u^{q-p}), \label{lemma 1 eq 11}
    \end{align} for all $u \in (0, 1)$. It can be verified that 
    \begin{align}
        f_q(-u) - f_p(-u)  &= g_{1,p,q}(u)\bigg/\left(\left((q-1)u^q + qu^{q-1} + 1\right)\left((p-1)u^p + pu^{p-1} + 1\right)\right) \label{lemma 1 eq 9} \\
        g_{2,p,q}(u)  &= \frac{1}{u^{p-1}}\left(\frac{1}{u+1} g_{1,p,q}(u) + (q-p)(u^{p+q-1} - 1)\right) \label{lemma 1 eq 10}
    \end{align} holds for all $u \in (0, 1)$. Since $(q-p)(u^{p+q - 1}-1) < 0$ holds for all $u \in (0, 1)$ we have that the statement 
    \begin{equation}\label{Claim 3}
        g_{2,p,q}(u) > 0 \text{ for all } u \in (0, 1)
    \end{equation} implies, by \eqref{lemma 1 eq 10}, that $g_{1,p,q}(u) > 0$ for all $u \in (0, 1)$, which in turn implies, by \eqref{lemma 1 eq 9}, that \eqref{Claim 2} holds. We will now focus on showing \eqref{Claim 3}.
    
     If $q-p \in [1, \infty)$, then \eqref{Claim 3} holds directly by \eqref{lemma 1 eq 11}, since then $u-u^{q-p} \geq 0$ holds for all $u \in (0, 1)$.

    The remaining case is when $q-p \in (0, 1)$. Introduce $s_1 \coloneqq p(q-1)(q-p+1)$, $s_2 \coloneqq q(p-1)(p - q+1)$. We compute the derivatives 
    \begin{align}
        g_{2,p,q}'(u) &= -s_1u^{q-p} + q(p-1) - q(p-1)(q-p)u^{q-p-1}, \label{lemma 1 eq 13} \\
        g_{2,p,q}''(u) &= (q-p)u^{q-p-2}(s_2 - s_1u). \label{lemma 1 eq 12}
    \end{align} 
    and verify that \begin{align*}
        \lim_{u \downarrow 0} g_{2,p,q}(u) = p(q-1) > 0&, \quad  \lim_{u \uparrow 1} g_{2,p,q}(u) = 0, \\ 
        \lim_{u \downarrow 0} g_{2,p,q}'(u) = -\infty&, \quad \lim_{u \uparrow 1} g_{2,p,q}'(u) = (p-q)\underbrace{(p(2q-1)-q+1)}_{ > p-q+1 > 0} < 0,
    \end{align*} from which we see that the statement \begin{equation} \label{Claim 4}
         g_{2,p,q}'(u) < 0 \text{ for all } u \in (0, 1)
    \end{equation} implies together with the mean value theorem applied to $g_{2,p,q}$ that \eqref{Claim 3} holds. 
    
    Suppose that \eqref{Claim 4} does not hold, i.e., there exists some $u \in (0, 1)$ such that $g_{2,p,q}'(u) \geq 0$. Then, by the mean value theorem applied to $g_{2,p,q}'$, there exists some $\bar{u} \in (0, 1)$ such that $g_{2,p,q}''(\bar{u}) = 0$ and $g_{2,p,q}'(\bar{u}) \geq 0$. By \eqref{lemma 1 eq 12}, it is necessary that $\bar{u} = s_2/s_1 \in (0, 1)$. By \eqref{lemma 1 eq 13}, we have that \begin{align*}
        g_{2,p,q}'(\bar{u}) &= -s_1\bar{u}^{q-p} + q(p-1) - q(p-1)(q-p)\bar{u}^{q-p-1}\\
        &= -s_2\bar{u}^{q-p-1} + q(p-1) - q(p-1)(q-p)\bar{u}^{q-p-1}\\
        &= q(p-1)\left( -(p-q+1)\bar{u}^{q-p-1} + 1 - (q-p)\bar{u}^{q-p-1}\right) = q(p-1)(1 - \bar{u}^{q-p-1})
    \end{align*} and since $q-p \in (0, 1)$ we get that $g_{2,p,q}'(\bar{u}) < 0$, which is a contradiction. Therefore, \eqref{Claim 4} holds.
\end{enumerate}
\end{proof}

\begin{cor}\label{corollary to lemmas}
 Let $p > 2$ and $f_p : \R \to \R$ be as in \eqref{definition of f_p}. Then, $\alpha(|\cdot|^p) > 0$ and \begin{equation*}\alpha(|\cdot|^p) = \biggl(\max  \left\{f_p(u) \mid u \in (-1, 0)\right\}\biggr)^{-1}.\end{equation*} 
\end{cor}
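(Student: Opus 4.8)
The plan is to combine the formula for $\alpha(|\cdot|^p)$ from \cref{prp::first proposition of p norms}, namely
$\alpha(|\cdot|^p) = \min\{\min\{f_p(u), f_p(u)^{-1}\} \mid u \in [-1,1]\}$,
with the structural information about $f_p$ supplied by \cref{technical lemma}\labelcref{technical lemma i}. First I would record that, by \cref{technical lemma}\labelcref{technical lemma i}, $f_p$ attains its maximum over $[-1,1]$ at the unique point $u_0 \in (-1,0)$; since $u_0$ itself lies in $(-1,0)$, this gives $\max\{f_p(u) \mid u \in (-1,0)\} = f_p(u_0) = \max\{f_p(u) \mid u \in [-1,1]\}$, and this value is a finite positive real number because $f_p$ is continuous on the compact set $[-1,1]$ and strictly positive (\cref{prp::first proposition of p norms}). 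In particular $f_p(u_0)^{-1} > 0$, which already yields the claim $\alpha(|\cdot|^p) > 0$ once the displayed identity is established.

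Next I would pin down the sign of $f_p(u_0) - 1$. Since $f_p(0) = p - 1 > 1$ for $p > 2$ and $f_p(u_0) \geq f_p(0)$, we get $f_p(u_0) > 1$, hence $f_p(u_0)^{-1} < 1$. Then I would use quasiconcavity: a continuous quasiconcave function on the interval $[-1,1]$ with unique maximizer $u_0$ is nondecreasing on $[-1,u_0]$ and nonincreasing on $[u_0,1]$, so its minimum over $[-1,1]$ is attained at an endpoint, and $f_p(-1) = f_p(1) = 1$ (recorded in \cref{prp::first proposition of p norms}). Therefore $f_p(u) \geq 1$ for every $u \in [-1,1]$.

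With these facts the estimate is immediate: for each $u \in [-1,1]$ we have $f_p(u)^{-1} \geq f_p(u_0)^{-1}$ because $f_p(u) \leq f_p(u_0)$, and $f_p(u) \geq 1 > f_p(u_0)^{-1}$; hence $\min\{f_p(u), f_p(u)^{-1}\} \geq f_p(u_0)^{-1}$, with equality at $u = u_0$ since there $f_p(u_0) > 1$ forces $\min\{f_p(u_0), f_p(u_0)^{-1}\} = f_p(u_0)^{-1}$. Plugging this into the formula from \cref{prp::first proposition of p norms} gives $\alpha(|\cdot|^p) = f_p(u_0)^{-1} = \bigl(\max\{f_p(u) \mid u \in (-1,0)\}\bigr)^{-1}$, completing the proof. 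I do not expect any real obstacle here; the only point requiring a word of justification is the unimodality of a continuous quasiconcave function with a unique maximizer (needed to conclude $\min_{[-1,1]} f_p = 1$), together with the minor bookkeeping that the maximum over the open interval $(-1,0)$ is genuinely attained because $u_0$ belongs to it.
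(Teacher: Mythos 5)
Your proof is correct and takes essentially the same route as the paper: it combines the formula \eqref{eq::compact domain} from \cref{prp::first proposition of p norms} with the quasiconcavity, the endpoint values $f_p(\pm 1)=1$, and the location of the unique maximizer $u_0\in(-1,0)$ from \itemcref{technical lemma}{technical lemma i} to conclude $f_p\geq 1$ on $[-1,1]$ and hence $\alpha(|\cdot|^p)=\bigl(\max_{u\in(-1,0)}f_p(u)\bigr)^{-1}>0$. The only cosmetic difference is that you argue pointwise through $u_0$ and $f_p(0)=p-1$, while the paper passes directly from $f_p^{-1}\leq f_p$ to $\min f_p^{-1}=(\max f_p)^{-1}$; the content is the same.
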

\begin{proof}
    By \cref{prp::first proposition of p norms} and \itemcref{technical lemma}{technical lemma i}, the function $f_p$ is continuous, positive, and quasiconcave on $[-1, 1]$ and satisfies $f_p(-1) = f_p(1) = 1$. Therefore, we get that $\alpha(|\cdot|^p) > 0$ and that $f_p(u)^{-1} \leq f_p(u)$ holds for all $u \in [-1, 1]$. Combining this with \eqref{eq::compact domain} we get that \begin{align*}
        \alpha(|\cdot|^p) &= \min \biggl\{f_p(u)^{-1} \mid u \in [-1, 1]\biggr\} \\
        &= \biggl(\max\ \{f_p(u) \mid u \in [-1, 1]\}\biggr)^{-1} \\
        &= \biggl(\max\ \{f_p(u) \mid u \in (-1, 0)\}\biggr)^{-1},
    \end{align*} where the last equality follows from \itemcref{technical lemma}{technical lemma i}.
\end{proof}

\begin{rem}
    Combining \cref{prp::first proposition of p norms}, \cref{technical lemma} and \cref{corollary to lemmas} yields \cref{alg:compute symmetry for 1d norm} which computes $\alpha(|\cdot|^p)$ for any $p \in (1, \infty)$. \cref{line::conjugacy} in \cref{alg:compute symmetry for 1d norm} is valid by the conjugacy argument described in the proof of \cref{cor::simple consequence of ||p}. \itemcref{technical lemma}{technical lemma i} gives that the bisection method in \cref{line::bisection} is well-defined. The method $\texttt{BisectionMethod}$ in \cref{line::bisection} is intentionally formulated in a general way, as any standard implementation of the bisection method suffices to guarantee the correctness of \cref{alg:compute symmetry for 1d norm}.
\end{rem}

\begin{algorithm}[H]
	\caption{Computing $\alpha(|\cdot|^p$)}
	\begin{algorithmic}[1]
	    \Statex \hspace{-5.7mm} \textbf{Input:} $p \in (1, \infty)$
            \State \textbf{Let:} $f_p$ as in \eqref{definition of f_p} and $g_p$ as in \eqref{defintion of g_p}
            \If {$p = 2$}
                \State \textbf{Output:} $1$
            \EndIf
            \If {$p < 2$}
                \State $p \leftarrow p/(p-1)$ \label{line::conjugacy}
            \EndIf
            \State \textbf{Compute:} $u_0 \leftarrow \texttt{BisectionMethod}(g_p, \ \text{left\_endpoint}=0, \ \text{right\_endpoint}=1)$ \label{line::bisection}
            \State \textbf{Output:} $f_p(-u_0)^{-1}$
	\end{algorithmic}
\label{alg:compute symmetry for 1d norm}
\end{algorithm}

\begin{cor}\label{Monotonically Decreasing Symmetry Coefficient}
    Let $q_1 > p_1 \geq 2 \geq p_2 > q_2.$ Then, $\alpha(|\cdot|^{p_1}) > \alpha(|\cdot|^{q_1})$ and $\alpha(|\cdot|^{p_2}) > \alpha(|\cdot|^{q_2})$.
\end{cor}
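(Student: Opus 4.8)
The plan is to deduce the corollary from \cref{corollary to lemmas} and \cref{technical lemma}, treating the endpoint exponent $2$ separately via \cref{prop::perfect_symmetry}, and then transporting the statement for exponents below $2$ to the range above $2$ by conjugation. First I would handle the case $q_1 > p_1 > 2$. By \cref{corollary to lemmas}, $\alpha(|\cdot|^{p_1}) = (\max\{f_{p_1}(u) \mid u \in (-1,0)\})^{-1}$ and $\alpha(|\cdot|^{q_1}) = (\max\{f_{q_1}(u) \mid u \in (-1,0)\})^{-1}$, and by \itemcref{technical lemma}{technical lemma i} each maximum is attained, say at $u_1, u_2 \in (-1,0)$ respectively. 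Applying \itemcref{technical lemma}{technical lemma ii} with $q = q_1$ and $p = p_1$ gives $f_{q_1}(u) > f_{p_1}(u)$ for every $u \in (-1,0)$, whence $\max\{f_{q_1}(u) \mid u \in (-1,0)\} \ge f_{q_1}(u_1) > f_{p_1}(u_1) = \max\{f_{p_1}(u) \mid u \in (-1,0)\} > 0$, the last positivity coming from \cref{prp::first proposition of p norms}. Since $t \mapsto t^{-1}$ is strictly decreasing on $(0,\infty)$, this yields $\alpha(|\cdot|^{q_1}) < \alpha(|\cdot|^{p_1})$.

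Next comes the boundary case $p_1 = 2$, $q_1 > 2$. Here $\alpha(|\cdot|^{2}) = 1$, while $|\cdot|^{q_1}$ is a Legendre function (it is proper, closed, strictly convex, and differentiable on all of $\R$, the latter because $q_1 > 1$) that is manifestly not a strictly convex quadratic with full domain; hence \cref{prop::perfect_symmetry} gives $\alpha(|\cdot|^{q_1}) \ne 1$, and together with $\alpha(|\cdot|^{q_1}) \in [0,1]$ from \cref{basic proposition} we obtain $\alpha(|\cdot|^{q_1}) < 1 = \alpha(|\cdot|^{p_1})$. Combining with the previous paragraph, this proves $\alpha(|\cdot|^{p_1}) > \alpha(|\cdot|^{q_1})$ for all $q_1 > p_1 \ge 2$, which is the first claimed inequality.

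For the second inequality, note first that $q_2 > 1$ is implicit, since $\alpha(|\cdot|^{q_2})$ is only defined when $|\cdot|^{q_2}$ is Legendre. Writing $r^{\ast} := r/(r-1)$, the conjugacy relation $(|\cdot|^{r}/r)^{\ast} = |\cdot|^{r^{\ast}}/r^{\ast}$ together with \cref{basic proposition} gives $\alpha(|\cdot|^{r}) = \alpha(|\cdot|^{r^{\ast}})$ for all $r \in (1,\infty)$, exactly as in the proof of \cref{cor::simple consequence of ||p}; moreover $r \mapsto r^{\ast}$ is a strictly decreasing involution of $(1,\infty)$ with $2^{\ast} = 2$. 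Hence $q_2 < p_2 \le 2$ implies $q_2^{\ast} > p_2^{\ast} \ge 2$, and applying the first inequality with $q_1 := q_2^{\ast}$ and $p_1 := p_2^{\ast}$ gives $\alpha(|\cdot|^{p_2^{\ast}}) > \alpha(|\cdot|^{q_2^{\ast}})$, i.e. $\alpha(|\cdot|^{p_2}) > \alpha(|\cdot|^{q_2})$.

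I do not anticipate a serious obstacle, since all the analytic work is already packaged in \cref{technical lemma}. The two points that need a little care are promoting the pointwise strict inequality of \itemcref{technical lemma}{technical lemma ii} to a strict inequality between the maxima — which uses the attainment guaranteed by \itemcref{technical lemma}{technical lemma i} — and remembering that \cref{corollary to lemmas} says nothing at $r = 2$, so that endpoint must be dispatched via \cref{prop::perfect_symmetry}.
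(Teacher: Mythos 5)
Your proposal is correct and follows essentially the same route as the paper: reduce the sub-quadratic case $q_2 < p_2 \le 2$ to the super-quadratic one via the conjugacy identity $\alpha(|\cdot|^{r}) = \alpha(|\cdot|^{r/(r-1)})$ from \cref{basic proposition}, dispatch the endpoint $p_1 = 2$ with \cref{prop::perfect_symmetry}, and for $q_1 > p_1 > 2$ combine \cref{corollary to lemmas} with \itemcref{technical lemma}{technical lemma ii}. Your extra care in using the attainment of the maximum from \itemcref{technical lemma}{technical lemma i} to turn the pointwise strict inequality into a strict inequality between the maxima is a detail the paper leaves implicit, but the argument is the same.
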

\begin{proof}
    By the conjugacy relation in \cref{basic proposition}, $\alpha(|\cdot|^{p_2}) = \alpha(|\cdot|^{p_2/(p_2-1)})$, and since $$q_2/(q_2-1) > p_2/(p_2-1) \geq 2$$ we only need to prove the statement regarding $q_1$ and $p_1$.
    
    If $p_1 = 2$, then the statement follows from \cref{prop::perfect_symmetry}. Otherwise, by \cref{corollary to lemmas} and \itemcref{technical lemma}{technical lemma ii} we have that \begin{align*}
        \alpha(|\cdot|^{q_1})^{-1} &= \max  \bigl\{f_q(u) \mid u \in (-1, 0)\bigr\} > \max  \bigl\{f_p(u) \mid u \in (-1, 0)\bigr\} = \alpha(|\cdot|^{p_1})^{-1}
    \end{align*} which implies the conclusion.
\end{proof}

We now return to the question posed in \eqref{natural question} and answer it in this setting.

\begin{thm} \label{prp::answering-proposition to question}
    Let $1 < p_1 \leq p_2 \leq \dots \leq p_m$ and let $\lambda_i \in \R_{++}$ for each $i \in \{1, \dots, m\}$. Then, $$\alpha\left(\sum_{i = 1}^m\lambda_i|\cdot|^{p_i}\right) = \min\{\alpha(|\cdot|^{p_1}), \alpha(|\cdot|^{p_m})\}.$$
\end{thm}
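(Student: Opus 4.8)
The plan is to reduce the general statement to \cref{cor::2 functions} by a grouping argument, using the scaling invariance of $\alpha$ from \cref{basic proposition} together with the monotonicity of $p \mapsto \alpha(|\cdot|^p)$ established in \cref{Monotonically Decreasing Symmetry Coefficient}. The first step is to handle the coefficients $\lambda_i$: although \cref{basic proposition} lets us rescale a single summand, it does not directly let us absorb all the $\lambda_i$ simultaneously. Instead I would group the summands by their degree. Let $q_1 < q_2 < \dots < q_k$ be the distinct values among $p_1, \dots, p_m$, and for each $j$ set $H_j \coloneqq \sum_{i : p_i = q_j} \lambda_i |\cdot|^{q_j} = \Big(\sum_{i : p_i = q_j} \lambda_i\Big) |\cdot|^{q_j} =: \mu_j |\cdot|^{q_j}$ with $\mu_j > 0$. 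Each $H_j$ is Legendre (a positive multiple of $|\cdot|^{q_j}$) and positively homogeneous of degree $q_j$, and $\sum_{i=1}^m \lambda_i |\cdot|^{p_i} = \sum_{j=1}^k H_j$. By \cref{basic proposition}, $\alpha(H_j) = \alpha(\mu_j |\cdot|^{q_j}) = \alpha(|\cdot|^{q_j})$.

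The second step applies \cref{Summation of Positively Hom} to the sum $h = \sum_{j=1}^k H_j$, whose summands now have \emph{strictly} increasing degrees $q_1 < \dots < q_k$. That proposition gives
\begin{equation*}
  \alpha(h) \in \left[ \min_{j \in \{1,\dots,k\}} \{\alpha(H_j)\},\ \min\{\alpha(H_1), \alpha(H_k)\} \right]
  = \left[ \min_{j} \alpha(|\cdot|^{q_j}),\ \min\{\alpha(|\cdot|^{q_1}), \alpha(|\cdot|^{q_k})\} \right].
\end{equation*}
The third step is to observe that the two ends of this interval coincide. By \cref{Monotonically Decreasing Symmetry Coefficient}, the function $q \mapsto \alpha(|\cdot|^q)$ is strictly increasing on $(1,2]$ and strictly decreasing on $[2,\infty)$; hence for any finite set of exponents $q_1 < \dots < q_k$ in $(1,\infty)$, the minimum of $\alpha(|\cdot|^{q_j})$ over $j$ is attained at one of the two extreme exponents $q_1$ or $q_k$ — a value strictly between them can never give a strictly smaller $\alpha$ than \emph{both} neighbours on the same side of $2$, and crossing $2$ only helps. (Concretely: if $2 \le q_1$ the sequence $\alpha(|\cdot|^{q_j})$ is strictly decreasing so the min is at $q_k$; if $q_k \le 2$ it is strictly increasing so the min is at $q_1$; if $q_1 < 2 < q_k$ then every interior $q_j$ satisfies either $q_1 < q_j$ with both on $(1,2]$-or-crossing giving $\alpha(|\cdot|^{q_j}) \ge \min\{\alpha(|\cdot|^{q_1}),\alpha(|\cdot|^{q_k})\}$, and similarly from the right.) Therefore $\min_j \alpha(|\cdot|^{q_j}) = \min\{\alpha(|\cdot|^{q_1}), \alpha(|\cdot|^{q_k})\}$, the interval collapses to a point, and $\alpha(h) = \min\{\alpha(|\cdot|^{q_1}), \alpha(|\cdot|^{q_k})\}$.

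The final step is bookkeeping: since $q_1 = p_1$ and $q_k = p_m$ (the smallest and largest exponents are unchanged by de-duplication, given $p_1 \le \dots \le p_m$), and $\alpha(|\cdot|^{q_j}) \ge \min\{\alpha(|\cdot|^{p_1}), \alpha(|\cdot|^{p_m})\}$ for \emph{all} original exponents $p_i$ by the same monotonicity argument, we conclude $\alpha\big(\sum_{i=1}^m \lambda_i |\cdot|^{p_i}\big) = \min\{\alpha(|\cdot|^{p_1}), \alpha(|\cdot|^{p_m})\}$. The only mild subtlety — and the step I would be most careful about — is the edge case $k = 1$ (all $p_i$ equal), where \cref{Summation of Positively Hom} does not apply; but then $h = \mu_1 |\cdot|^{p_1}$ and $\alpha(h) = \alpha(|\cdot|^{p_1}) = \min\{\alpha(|\cdot|^{p_1}), \alpha(|\cdot|^{p_m})\}$ trivially. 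No genuinely hard estimate is needed; the real content was already packed into \cref{technical lemma} and \cref{Monotonically Decreasing Symmetry Coefficient}.
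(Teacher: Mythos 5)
Your proof is correct and follows essentially the same route as the paper: apply \cref{Summation of Positively Hom} to obtain the two-sided bound, strip the positive scalars with \cref{basic proposition}, and collapse the interval to a point using the unimodality of $p \mapsto \alpha(|\cdot|^p)$ from \cref{Monotonically Decreasing Symmetry Coefficient}. Your additional grouping of equal exponents (and the $k=1$ edge case) is a sensible refinement rather than a different argument: the paper applies \cref{Summation of Positively Hom} directly to all $m$ summands even though the theorem permits ties such as $p_1 = p_2$, which the proposition's hypothesis $p_1 < p_2 \leq \dots \leq p_{m-1} < p_m$ formally excludes, so your version is in fact slightly more careful on that point.
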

\begin{proof}
    We have that \begin{align*}
        \alpha\left(\sum_{i = 1}^m\lambda_i|\cdot|^{p_i}\right) &\in \left[\min_{i \in\{1, \dots, m\}} \{ \alpha(\lambda_i |\cdot|^{p_i})\}, \ \min\{ \alpha(\lambda_1 |\cdot|^{p_1}), \alpha(\lambda_m |\cdot|^{p_m})\}\right] \\
         &= \left[\min_{i \in\{1, \dots, m\}} \{ \alpha(|\cdot|^{p_i})\}, \ \min\{ \alpha(|\cdot|^{p_1}),\alpha(|\cdot|^{p_m})\}\right] \\
         &=  \biggl\{\min\{ \alpha(|\cdot|^{p_1}),\alpha(|\cdot|^{p_m})\}\biggr\},
    \end{align*} where the inclusion follows from \cref{Summation of Positively Hom} and since $\lambda_i |\cdot|^{p_i}$ is positively homogeneous of degree $p_i$ for all $i \in \{1, \dots, m\}$, the first equality follows from \cref{basic proposition}, and the second equality follows from \cref{Monotonically Decreasing Symmetry Coefficient}.
\end{proof}

We end this section with the asymptotic result relating to \labelcref{item::C_3}.

\begin{thm} \label{thm:assymptotic theorem}
\mbox{} \smallskip
\begin{enumerate}[label=(\roman*)]
    \item  $\alpha(|\cdot|^p) > (2p)^{-1}$ holds for all $p \in (2, \infty)$. \label{itm:assymptotic lower bound}
    \item  For any $c \in (0, 2)$ there exists a $P \in (2, \infty)$ such that $p \in (P, \infty)$ implies that $$\alpha(|\cdot|^p) < (cp)^{-1}.$$ \label{itm:not assymptotic lower bound}
    \item  $\alpha(|\cdot|^p) \sim(2p)^{-1}$ as $p \to \infty$. \label{itm:assymptotic equivalence}
\end{enumerate}
\end{thm}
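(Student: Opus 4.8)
The plan is to prove the three items in the order \labelcref{itm:assymptotic lower bound}, \labelcref{itm:not assymptotic lower bound}, \labelcref{itm:assymptotic equivalence}, since the last is an immediate consequence of the first two. By \cref{corollary to lemmas}, for $p > 2$ we have $\alpha(|\cdot|^p)^{-1} = \max\{f_p(u) \mid u \in (-1,0)\}$, so after the substitution $u = -t$ with $t \in (0,1)$ everything reduces to studying the single-variable function
\begin{equation*}
    F_p(t) \coloneqq f_p(-t) = \frac{t^p + pt + (p-1)}{(p-1)t^p + pt^{p-1} + 1}, \qquad t \in (0,1).
\end{equation*}
So \labelcref{itm:assymptotic lower bound} is equivalent to the uniform bound $F_p(t) < 2p$ for all $t \in (0,1)$, and \labelcref{itm:not assymptotic lower bound} is equivalent to: for every $c \in (0,2)$, eventually $\max_{t} F_p(t) > cp$.

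For \labelcref{itm:assymptotic lower bound}, the main obstacle is controlling $F_p(t)$ uniformly over the whole interval $(0,1)$, not just near the maximizer. The numerator is bounded above by $t^p + pt + p - 1 < 2p$ trivially for $t \in (0,1)$ (since $t^p < 1$ and $pt < p$), so it suffices to show the denominator $(p-1)t^p + pt^{p-1} + 1 > 1$ — but that is false, the denominator can be less than... no wait, $(p-1)t^p + pt^{p-1} + 1 \geq 1$ always since the first two terms are nonnegative, so in fact $F_p(t) < \text{numerator} < 2p$ immediately. Hence \labelcref{itm:assymptotic lower bound} should follow from the crude bound numerator $< 2p$ and denominator $\geq 1$; I would double-check the strictness (the numerator is strictly less than $2p$ because $t < 1$ forces $pt < p$), and this gives $\alpha(|\cdot|^p) = F_p(t_0)^{-1} > (2p)^{-1}$ where $t_0 = -u_0$.

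For \labelcref{itm:not assymptotic lower bound} the work is genuine: I must exhibit, for large $p$, some $t \in (0,1)$ with $F_p(t) > cp$. The natural choice is $t$ close to $1$ from below, say $t = 1 - a/p$ for a suitable constant $a > 0$ depending on $c$; then $t^{p-1} \to e^{-a}$, $t^p \to e^{-a}$, so the denominator $\to (p-1)e^{-a} + pe^{-a} + 1 \approx 2p e^{-a}$ while the numerator is $\approx 2p$, giving $F_p(t) \approx e^{a}$ — that is only a constant, not growing with $p$, so this scaling is wrong. Instead the maximizer should be much closer to $1$: try $t = 1 - b/\sqrt{p}$ or examine the behavior more carefully. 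Actually, evaluating at $t$ with $1 - t$ of order $1/p$ but tracking the next-order terms: writing the denominator as $(p-1)t^p + pt^{p-1} + 1$ and numerator $t^p + pt + p - 1$, near $t = 1$ both are $\approx 2p$, and the ratio's deviation from $1$ is governed by the $O(1)$ and $O(1/p)$ corrections. The right approach is to expand $F_p$ around $t=1$ to second order: since $f_p(1) = 1$ and (from \cref{technical lemma}) the maximizer lies in $(-1,0)$, I expect $t_0 \to 1$ as $p \to \infty$ and a local Taylor/Laplace-type analysis of $F_p$ near $t = 1$ will show $\max F_p \sim 2p$, i.e. $F_p(t_0) = 2p(1 + o(1))$, which yields both \labelcref{itm:not assymptotic lower bound} (pick any $c < 2$) and the matching upper direction. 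Concretely, I would substitute $t = e^{-s/p}$ with $s \geq 0$ fixed, so $t^p = e^{-s}$, $t^{p-1} = e^{-s}e^{s/p}$, $pt = pe^{-s/p} = p - s + O(1/p)$, obtaining
\begin{equation*}
    F_p(e^{-s/p}) = \frac{e^{-s} + (p - s) + (p-1) + O(1/p)}{(p-1)e^{-s} + p e^{-s}(1 + s/p + O(1/p^2)) + 1} = \frac{2p - s - 1 + O(1/p)}{2p e^{-s} + e^{-s}s + 1 - e^{-s} + O(1/p)},
\end{equation*}
so $F_p(e^{-s/p}) \to e^{s}$ — again only constant. This tells me the maximizer is even closer to $1$, namely $1 - t_0 = o(1/p)$, so I should instead expand to second order in $(1-t)$ treating $(1-t)$ as the small parameter and $p$ as large simultaneously, i.e. set $1 - t = \tau$ and Taylor-expand numerator and denominator in $\tau$ up to $\tau^2$, keeping the $p$-dependence of the coefficients; the leading nonconstant term will be $\sim c_1 p \tau^2$ in the denominator relative to the numerator, and maximizing over $\tau$ gives $\max F_p \sim$ a quantity growing linearly in $p$. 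The hard part is carrying this joint asymptotic expansion carefully enough to pin down the constant as exactly $2$; I would organize it as: (a) show $t_0 \to 1$, (b) set $\tau = 1 - t_0$ and show $p\tau^2 \to \text{const}$, (c) read off $F_p(t_0) = 2p(1+o(1))$. Finally, \labelcref{itm:assymptotic equivalence} follows by combining \labelcref{itm:assymptotic lower bound} ($\alpha(|\cdot|^p) > (2p)^{-1}$) with the upper estimate from the expansion in (c) ($\alpha(|\cdot|^p) = (2p)^{-1}(1 + o(1))$), which together give $\alpha(|\cdot|^p) \sim (2p)^{-1}$.
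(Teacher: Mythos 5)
Your argument for \labelcref{itm:assymptotic lower bound} is correct and is in fact more elementary than the paper's: since \cref{corollary to lemmas} gives $\alpha(|\cdot|^p)^{-1}=\max\{f_p(u)\mid u\in(-1,0)\}$ and this maximum is attained at some $u_0\in(-1,0)$ by \itemcref{technical lemma}{technical lemma i}, the crude bounds (numerator of $f_p(-t)$ in \eqref{eq:fq-u} is $t^p+pt+p-1<2p$ and the denominator exceeds $1$ for $t\in(0,1)$) immediately give $\alpha(|\cdot|^p)^{-1}=f_p(u_0)<2p$ strictly. The paper instead runs \labelcref{itm:assymptotic lower bound} through the auxiliary polynomials $g_{1,p,c},g_{2,p,c}$ with $c=2$ and their stationary point; your shortcut is a genuine simplification of that part.

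For \labelcref{itm:not assymptotic lower bound}, however, there is a real gap: you never produce a test point (or any completed estimate) showing $\max_t f_p(-t)>cp$ for large $p$, and the sketch you leave in its place is based on a wrong scaling. From your own computation, $t=1-a/p$ gives $f_p(-t)\approx e^a$, so to reach values of order $p$ the maximizer must satisfy $p(1-t_0)\to\infty$ (in fact $1-t_0$ is of order $(\ln p)/p$, consistent with the paper's facts $u_{p,c}\to1$ and $u_{p,c}^{p-1}\to0$); your conclusion ``$1-t_0=o(1/p)$'' points in the opposite direction, and at that scale $f_p(-t)\to1$, not $2p$. The proposed remedy---a second-order Taylor expansion in $\tau=1-t$ with ``$p\tau^2\to\mathrm{const}$''---both mis-describes the maximizer and is analytically invalid in the relevant regime, since when $p\tau\to\infty$ the term $t^p\approx e^{-p\tau}$ is not approximated by its quadratic Taylor polynomial with $p$-dependent coefficients. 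The fix is much simpler than what you attempt: for $c\in(0,2)$ fix $\varepsilon\in(0,2-c)$ and evaluate at $t=1-\varepsilon$; then $pt^{p-1}\to0$, so the denominator in \eqref{eq:fq-u} tends to $1$ while the numerator is $\sim(2-\varepsilon)p$, whence $\alpha(|\cdot|^p)^{-1}\ge f_p(-(1-\varepsilon))>cp$ for all large $p$. (The paper instead evaluates $g_{1,p,c}$ at the unique zero $u_{p,c}$ of $g_{2,p,c}$ and shows $u_{p,c}^{p-1}\to0$, $u_{p,c}\to1$.) Since \labelcref{itm:not assymptotic lower bound} is not established in your proposal, \labelcref{itm:assymptotic equivalence} is not either, although your reduction of \labelcref{itm:assymptotic equivalence} to \labelcref{itm:assymptotic lower bound} and \labelcref{itm:not assymptotic lower bound} is the same (correct) limsup/liminf step as in the paper.
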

\begin{proof}
Let $p \in (2, \infty)$ and $c \in (1, \infty)$. Let $g_{1, p, c}, g_{2, p, c} : (0, 1) \to \R$ be defined by 
\begin{align*}
    g_{1, p, c}(u) &\coloneqq \left(cp(p-1) - 1\right)u^p + cp^2 u^{p-1} - pu + p(c-1) + 1 \\
    g_{2, p, c}(u) &\coloneqq \left(cp(p-1) - 1\right)u^{p-1} + cp(p-1)u^{p-2} - 1 
\end{align*} for all $u \in (0, 1)$. These functions are continuously differentiable on $(0, 1)$ and \begin{align}
    f_p(-u)^{-1} - (cp)^{-1} &= g_{1, p, c}(u)/\left( cp(u^p + pu + p -1)\right) \label{eq:assymptotic 1} \\
    g_{1, p, c}'(u) &= p g_{2, p, c}(u) \label{eq:assymptotic 2}
\end{align} holds for all $u \in (0, 1)$, where we have utilized \eqref{eq:fq-u} in \eqref{eq:assymptotic 1}. Using \cref{corollary to lemmas,eq:assymptotic 1} we see that \begin{equation} \label{eq:assymptotic eq1}
\alpha(|\cdot|^p) - (cp)^{-1} > 0 \iff g_{1, p, c}(u) > 0 \text{ for all } u \in (0, 1)
\end{equation} and \begin{equation} \label{eq:assymptotic eq2}
\alpha(|\cdot|^p) - (cp)^{-1} < 0 \iff g_{1, p, c}(u) < 0 \text{ for some } u \in (0, 1).
\end{equation} Since $\lim_{u \downarrow 0} g_{1, p, c}(u) = p(c-1)+1 > 1$ and $\lim_{u \uparrow 1} g_{1, p, c}(u) = 2p(cp-1) > 4$, the only chance for $g_{1, p, c}$ being negative is at stationary points, i.e., points where $g_{2, p, c}$ is zero, by \eqref{eq:assymptotic 2}. Since $\lim_{u \downarrow 0} g_{2, p, c}(u) = -1$ and $\lim_{u \uparrow 1} g_{2, p, c}(u) = 2(cp(p-1) - 1) > 2$ there exists a point $u_{p, c} \in (0, 1)$ such that $g_{2, p, c}(u_{p, c} ) = 0$. Since $g_{2, p, c}$ is a strictly increasing function on $(0, 1)$, the zero $u_{p, c}$ of $g_{2, p, c}$ must be unique, and we will, for each $c \in (1, \infty)$ and $p \in (2, \infty)$, from now on let $u_{p, c}$ denote the unique zero of $g_{2, p, c}$. By the uniqueness of $u_{p,c}$ we get the updated equivalences from \eqref{eq:assymptotic eq1} and \eqref{eq:assymptotic eq2} as
\begin{equation} \label{eq:assymptotic eq1 updated}
\alpha(|\cdot|^p) - (cp)^{-1} > 0 \iff g_{1, p, c}(u_{p,c}) > 0
\end{equation} and \begin{equation} \label{eq:assymptotic eq2 updated}
\alpha(|\cdot|^p) - (cp)^{-1} < 0 \iff g_{1, p, c}(u_{p,c}) < 0,
\end{equation} respectively. By definition of $u_{p,c}$ we get that 
\begin{align*}u_{p,c}g_{2, p, c}(u_{p, c}) &= \left(cp(p-1) - 1\right)u_{p, c}^{p} + cp(p-1)u_{p,c}^{p-1} - u_{p,c} = 0\end{align*} and so
\begin{align}
    g_{1, p, c}(u_{p,c}) &= \left(cp(p-1) - 1\right)u_{p,c}^p + cp^2 u_{p,c}^{p-1} - pu_{p,c} + p(c-1) + 1 \nonumber \\
    &= -cp(p-1)u_{p,c}^{p-1} + u_{p,c} + cp^2 u_{p,c}^{p-1} - pu_{p,c} + p(c-1) + 1 \nonumber \\
    &= cpu_{p,c}^{p-1} + (1-p)u_{p,c} + p(c-1) + 1 \nonumber \\
    &= p\left(c u_{p,c}^{p-1} + c - (u_{p,c} + 1)\right) + u_{p,c} + 1. \label{eq:assymptotic eq3}
\end{align} 

\begin{enumerate}[labelindent=0pt,wide=0pt]
\item[\labelcref{itm:assymptotic lower bound}] In the case $c = 2$, then $$p\left(c u_{p,c}^{p-1} + \underbrace{c - (u_{p,c} + 1}_{> 0})\right) + u_{p,c} + 1$$ and so $g_{1, p, c}(u_{p,c}) > 0$ which with \eqref{eq:assymptotic eq1 updated} in mind implies \labelcref{itm:assymptotic lower bound}.
\item[\labelcref{itm:not assymptotic lower bound}] If the statement holds for some $c \in (1, 2)$, then it also must hold for any $c' \in (0, c)$, since $(cp)^{-1} < (c'p)^{-1}$ holds for any $p > 0$. Therefore, let us focus on that case and fix $c \in (1, 2)$. 

First, we will establish that $\lim_{p \to \infty} u_{p, c}^{p-1} = 0$. This follows from the defining property $g_{2, p, c}( u_{p, c}) = 0$, since we get that $$0 < u_{p, c}^{p-1} = \frac{1 - cp(p-1) u_{p, c}^{p-2}}{cp(p-1)-1} < \frac{1}{cp(p-1)-1} \to 0 \text{ as } p \to \infty.$$ Next, we will establish that $\lim_{p \to \infty} u_{p, c} = 1$. We do this by showing that for any $u \in (0, 1)$ then $\lim_{p \to \infty} g_{2, p, c}(u) = -1$. Since $g_{2, p, c}$ is an increasing function, this will, by the mean value theorem applied to $g_{2, p, c}$, imply that $\lim_{p \to \infty} u_{p, c} = 1$. Let $u \in (0, 1)$, then $$-1 < g_{2, p, c}(u) < cp^2u^{p-1} + cp^2u^{p-2} - 1 \to -1 \text{ as } p \to \infty.$$

Now since $\lim_{p \to \infty}c - (u_{p,c} + 1) = c-2 < 0$, there exists a $P_1 \in (2, \infty)$ be such that $p \in (P_1, \infty)$ implies that $c - (u_{p,c} + 1) < (c-2)/2$. Since $\lim_{p \to \infty}u_{p,c}^{p-1} = 0$, there exists a $P_2 \in (2, \infty)$ such that $p \in (P_2, \infty)$ implies that $u_{p,c}^{p-1} < (2-c)/(4c)$. Finally, let $P_3 \in (8/(2-c), \infty)$ and $P \coloneqq \max\{P_1, P_2, P_3\}$. Then if $p \in (P, \infty)$ we get that \begin{align*}
    g_{1, p, c}(u_{p,c}) &= p\left(c u_{p,c}^{p-1} + c - (u_{p,c} + 1)\right) + u_{p,c} + 1 \\
    &<  p\left(c u_{p,c}^{p-1} + c - (u_{p,c} + 1)\right) + 2 \\
    &< p\left(c u_{p,c}^{p-1} + \tfrac{c-2}{2}\right) + 2 \\
    &< p \tfrac{c-2}{4} + 2  \\
    &< 0,
\end{align*} where the equality is \eqref{eq:assymptotic eq3}, the first inequality holds since $u_{p,c}<1$, and the second, third, and fourth inequalities follow from $p > P_1$, $p>P_2$, and $p>P_3$, respectively. Therefore, from \eqref{eq:assymptotic eq2 updated}, \labelcref{itm:not assymptotic lower bound} follows.
\item[\labelcref{itm:assymptotic equivalence}] Let $\phi : (2, \infty) \to \R$ be defined by $\phi(p) = 2p\alpha(|\cdot|^p)$ for all $p \in (2, \infty)$. We aim to show that $\lim_{p \to \infty}\phi(p) = 1$. By \labelcref{itm:assymptotic lower bound}, $\phi > 1$ and so $\liminf_{p \to \infty} \phi(p) \geq 1$. Now suppose that $\limsup_{p \to \infty} \phi(p) > 1$, then there exists some $c \in (0, 1)$ such that $$\limsup_{p \to \infty} c\phi(p) = \limsup_{p \to \infty} 2cp\alpha(|\cdot|^p) > 1,$$ i.e., for any $P \in (2, \infty)$ there exists some $p \in (P, \infty)$ such that $\alpha(|\cdot|^p) > (2cp)^{-1}$. But $2c \in (0, 2)$, and so this contradicts \labelcref{itm:not assymptotic lower bound}. Therefore, $\limsup_{p \to \infty} \phi(p) \leq 1 \leq \liminf_{p \to \infty} \phi(p)$ which implies that $\lim_{p \to \infty} \phi(p) = 1$, i.e, $\alpha(|\cdot|^p) \sim (2p)^{-1}$.
\end{enumerate}
\end{proof}

\section[]{The symmetry coefficient of $\norm{\cdot}_p^p$ and $\norm{\cdot}_2^p$} \label{sym nd}

This section will be dealing with the symmetry coefficients of two types of functions, $\norm{\cdot}_p^p : \R^n \to \R$ and $\norm{\cdot}_2^p : \R^n \to \R$, of general underlying dimension $n$. As \cref{p norm simple}, the symmetry coefficient of $\norm{\cdot}_p^p$ is equal to its 1-dimensional counterpart, as treated in \cref{Symmetry when Separable}. 

\begin{cor}\label{p norm simple}
    Let $p > 1$ and $n$ be a positive integer. Consider the function $\norm{\cdot}_p^p : \R^n \to \R$. Then, $\alpha(\norm{\cdot}_p^p) = \alpha(|\cdot|^p).$
\end{cor}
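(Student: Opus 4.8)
The plan is to recognize that $\norm{\cdot}_p^p$ is a separable function and then invoke \cref{Symmetry when Separable} directly. Concretely, for every $x = (x_1, \dots, x_n) \in \R^n$ we have $\norm{x}_p^p = \sum_{i=1}^n |x_i|^p$, so with the choice $h_i \coloneqq |\cdot|^p : \R \to \R$ for each $i \in \{1, \dots, n\}$ we obtain $\norm{\cdot}_p^p(x) = \sum_{i=1}^n h_i(x_i)$, which is exactly the separable form required in the hypothesis of \cref{Symmetry when Separable}.

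The only thing that needs checking before applying that proposition is that each $h_i = |\cdot|^p$ is a Legendre function on $\R$, which was already recorded in the first lines of the proof of \cref{prp::first proposition of p norms} (the function $|\cdot|^p : \R \to \R$ is proper, closed, differentiable on all of $\R$, and strictly convex, hence Legendre). With that in hand, \cref{Symmetry when Separable} yields that $\norm{\cdot}_p^p$ is Legendre and
\[
\alpha\p{\norm{\cdot}_p^p} = \min_{i \in \{1, \dots, n\}} \set{\alpha(h_i)} = \min_{i \in \{1, \dots, n\}} \set{\alpha(|\cdot|^p)} = \alpha(|\cdot|^p),
\]
since all the component functions are identical. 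This closes the argument.

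There is essentially no obstacle here: the corollary is a one-line consequence of the separability of $\norm{\cdot}_p^p$ together with the already-established \cref{Symmetry when Separable}. The only mild point worth a sentence is to make explicit that the Legendre property of the one-dimensional building block $|\cdot|^p$ has already been verified earlier, so that the hypotheses of \cref{Symmetry when Separable} are genuinely met; beyond that, no calculation is needed.
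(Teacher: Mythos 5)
Your argument is correct and is exactly the paper's own proof: write $\norm{x}_p^p = \sum_{i=1}^n |x_i|^p$, note each coordinate function $|\cdot|^p$ is Legendre, and apply \cref{Symmetry when Separable}. Nothing further is needed.
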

\begin{proof}
    Since $\norm{x}_p^p = \sum_{i = 1}^n |x_i|^p$ holds for all $x = (x_1, \dots, x_n) \in \R^n$, the function $\norm{\cdot}_p^p$ is separable and by \cref{Symmetry when Separable}, $$\alpha(\norm{\cdot}_p^p) = \min_{i \in \{1, \dots, n\}} \{\alpha(|\cdot|^p)\} = \alpha(|\cdot|^p).$$
\end{proof}

As \cref{p norm hard}, the symmetry coefficient of $\norm{\cdot}_p^2$ is fully captured by the one-dimensional case covered in \cref{corollary to lemmas}. 

\begin{thm}\label{p norm hard}
    Let $p > 1$ and $n \geq 2$ be a positive integer. Consider the function $\norm{\cdot}_2^p : \R^n \to \R$. Then,  $\alpha(\norm{\cdot}_2^p) = \alpha(|\cdot|^p)$.
\end{thm}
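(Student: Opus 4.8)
The plan is to reduce the $n$-dimensional problem to the one-dimensional one by showing that the infimum in the definition of $\alpha(\norm{\cdot}_2^p)$ is attained (in the limit) on pairs $x,y$ that lie on a common line through the origin, which is exactly the situation handled by $\alpha(|\cdot|^p)$. First I would record the easy inequality $\alpha(\norm{\cdot}_2^p) \le \alpha(|\cdot|^p)$: restricting $x,y$ to a coordinate axis (or any line through $0$) turns $\norm{\cdot}_2^p$ into $|\cdot|^p$ up to the trivial identification, so the infimum over the larger set of pairs in $\R^n$ can only be smaller. The substance of the theorem is the reverse inequality $\alpha(\norm{\cdot}_2^p) \ge \alpha(|\cdot|^p)$, i.e. that no genuinely $n$-dimensional configuration beats the collinear ones.

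For the reverse inequality I would fix $x,y \in \R^n$ with $x \ne y$ and analyze the ratio $D_h(x,y)/D_h(y,x)$ where $h = \norm{\cdot}_2^p$. Using $\nabla h(y) = p\norm{y}_2^{p-2} y$, write out
\begin{equation*}
D_h(x,y) = \norm{x}_2^p - \norm{y}_2^p - p\norm{y}_2^{p-2}\langle y, x-y\rangle.
\end{equation*}
The key observation is that this depends on $x$ only through $\norm{x}_2$ and $\langle x, y\rangle$. So I would parametrize: set $a = \norm{x}_2$, $b = \norm{y}_2$, and $\langle x,y\rangle = ab\cos\theta$ where $\theta$ is the angle between $x$ and $y$ (when $x,y \ne 0$; the cases where one of them is $0$ are a separate, easier sub-case using $D_h(\cdot,0)$ and homogeneity). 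Then $D_h(x,y)$ becomes a function of $(a,b,\theta)$ only, and likewise $D_h(y,x)$. The goal is to show that for fixed $a,b$ the ratio is minimized (over all admissible $\theta$, and in particular its infimum is no smaller than the one-dimensional value) when $\cos\theta = \pm 1$, i.e. when $x,y$ are collinear — because collinear pairs reproduce exactly the one-dimensional ratios $f_p(u)$ and $f_p(u)^{-1}$ from \eqref{eq::f(u) calculation}, whose infimum over all configurations is $\alpha(|\cdot|^p)$ by \cref{prp::first proposition of p norms}. By homogeneity of $D_h$ (\cref{cor::domains}) one may further normalize, say $b = 1$, reducing to a two-variable problem in $(a,\theta)$ or, after another scaling, essentially to showing that a certain function of the single ratio $a/b$ and $\cos\theta$ has the claimed monotonicity in $\cos\theta$.

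The main obstacle I expect is precisely this monotonicity/extremality claim in the angular variable $\theta$: one must show the ratio $D_h(x,y)/D_h(y,x)$, as a function of $\cos\theta \in [-1,1]$ with $a,b$ fixed, attains its minimum at an endpoint $\cos\theta = \pm 1$. Both numerator and denominator are affine in $\cos\theta$ (the $\langle y, x-y\rangle$ and $\langle x, y-x\rangle$ terms are linear in $\langle x,y\rangle = ab\cos\theta$, and $\norm{x}_2^p, \norm{y}_2^p$ don't involve $\theta$), so the ratio is a Möbius (linear-fractional) function of $t := \cos\theta$. A linear-fractional function on an interval is monotone (it has no interior extremum unless it is constant), hence its minimum over $[-1,1]$ is at $t = -1$ or $t = 1$ — this is the crucial simplification that makes the argument go through. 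I would therefore (i) check that the denominator $D_h(y,x)$ stays strictly positive as $t$ ranges over the feasible sub-interval of $[-1,1]$ (it does, by strict convexity of $h$ along the relevant segment, though one must be careful that not every $(a,b,t)$ corresponds to an actual pair $x \ne y$ — feasibility requires $t$ such that $a^2 + b^2 - 2abt \ge 0$, automatically true, and $x\ne y$), and (ii) conclude that the infimum of the ratio over all $x,y$ equals the infimum over collinear pairs, which is $\alpha(|\cdot|^p)$. Combining the two inequalities yields $\alpha(\norm{\cdot}_2^p) = \alpha(|\cdot|^p)$. A minor point to handle at the end: collinear pairs with opposite signs correspond to $t=-1$ and give ratios $f_p(u)$ with $u<0$, which are exactly the ones \cref{corollary to lemmas} identifies as governing $\alpha(|\cdot|^p)$ for $p>2$; for $p \in (1,2)$ one passes through the conjugate as in \cref{cor::simple consequence of ||p}.
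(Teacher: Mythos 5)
Your proposal is correct, and for the crucial step it takes a genuinely different and leaner route than the paper. Both arguments make the same reduction of the ratio $D_{\norm{\cdot}_2^p}(x,y)/D_{\norm{\cdot}_2^p}(y,x)$ to a function of $u=\norm{x}_2/\norm{y}_2$ and $r=\cos\theta$ (this is the paper's $F_p(u,r)$ in \eqref{eq::def F_p}), and both must rule out genuinely $n$-dimensional minimizers by an extremality-in-$r$ statement. The paper proves the directed monotonicity claim \eqref{subclaim} ($F_p(u,\cdot)$ nonincreasing) by explicitly computing $F_p(u,r)-F_p(u,s)$ and reducing it to the inequality \eqref{eq::dependence gone}, which it then establishes through the auxiliary functions $G_{1,p},G_{2,p}$ and two applications of the mean value theorem; afterwards it still needs the restriction to $p>2$, the quasiconcavity statement \itemcref{technical lemma}{technical lemma i}, and \cref{corollary to lemmas} to evaluate the resulting one-variable problems. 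You instead observe that, for fixed $\norm{x}_2=a$ and $\norm{y}_2=b$, both $D_h(x,y)=a^p+(p-1)b^p-pab^{p-1}t$ and $D_h(y,x)=b^p+(p-1)a^p-pa^{p-1}bt$ are affine in $t=\cos\theta$, so the ratio is a M\"obius function of $t$ with positive denominator on $[-1,1]$ (every $(a,b,t)$ with $t\in[-1,1]$ is realized by some pair $x\neq y$ when $n\geq 2$, except the degenerate point $a=b$, $t=1$, where the ratio is constantly $1$ for $t<1$), hence monotone, hence minimized at $t=\pm1$; both endpoints are collinear configurations whose ratios are exactly the one-dimensional ratios of \eqref{eq::f(u) calculation}, so they are $\geq\alpha(|\cdot|^p)$ by the very definition of the infimum, and the easy inequality $\alpha(\norm{\cdot}_2^p)\leq\alpha(|\cdot|^p)$ from restriction to a line closes the argument. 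This buys you two things: you never need to know the direction of the monotonicity in $r$ (so the entire $G_{1,p},G_{2,p}$ machinery disappears), and you need none of the one-dimensional structure theory ($p>2$ reduction, quasiconcavity of $f_p$, \cref{corollary to lemmas}) beyond \cref{prp::first proposition of p norms} or even just the definition of $\alpha(|\cdot|^p)$; indeed your argument works uniformly for all $p>1$, making your concluding appeal to conjugacy for $p\in(1,2)$ unnecessary (though harmless). The only points to state carefully in a write-up are the ones you already flag: positivity of the denominator on the whole closed $t$-interval (via realizability and strict convexity), the degenerate case $a=b$, and the cases $x=0$ or $y=0$, which are themselves collinear and give the ratio $(p-1)^{\pm1}$.
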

\begin{proof}
    First, note that $\norm{\cdot}_2^p : \R^n \to \R$ is a Legendre function.
    
    If $p = 2$, then the conclusion follows as a special case of \cref{prop::perfect_symmetry}. If $p < 2$, then let $q = p/(p-1) > 2$ so that by \cref{basic proposition} we have that $$\alpha(\norm{\cdot}_2^p) = \alpha\left(\left(\frac{1}{p}\norm{\cdot}_2^p\right)^*\right) = \alpha\left(\frac{1}{q}\norm{\cdot}_2^q\right) = \alpha\left(\norm{\cdot}_2^q\right).$$ Therefore, we only need to consider the case of when $p > 2$. 
    
    Suppose that $p > 2$ and let $x, y \in \R^n$. By the Cauchy-Schwarz inequality, we can write $\langle x, y \rangle = \norm{x}_2 \norm{y}_2 r_{x, y}$ for some  $r_{x, y} \in [-1, 1]$. We get that \begin{align*}
        D_{\norm{\cdot}_2^p}(x, y) &= \norm{x}_2^p - \norm{y}_2^p - \langle p\norm{y}_2^{p-2}y, x-y\rangle  \\
        &= \norm{x}_2^p + (p-1)\norm{y}_2^p - p \norm{x}_2 \norm{y}_2^{p-1} r_{x, y}
    \end{align*} and that
    \begin{equation}\label{alpha H exp1}
    \alpha(\norm{\cdot}_2^p) = \inf \left\{\left. \frac{\norm{x}_2^p + (p-1)\norm{y}_2^p - p\norm{x}_2 \norm{y}_2^{p-1} r_{x, y} }{(p-1) \norm{x}_2^p +\norm{y}_2^p - p\norm{x}_2^{p-1} \norm{y}_2 r_{x, y}} \right| x, y \in \R^n \text{ and } x \neq y \right\}.
    \end{equation} If $y = 0$, then \begin{equation} \label{eq::including y = 0}
         \frac{D_{\norm{\cdot}_2^p}(x, 0)}{ D_{\norm{\cdot}_2^p}(0, x)} = \frac{1}{p-1}
    \end{equation} holds for all $x \in \R^n \setminus \{0\}$. 
    
    Let $K \coloneqq (\R_+ \times [-1, 1]) \setminus (\{1\} \times \{1\})$ and $F_p : K \to \R$ be defined by \begin{equation}\label{eq::def F_p}
    F_p(u, r) \coloneqq \frac{u^p - pru + (p-1)}{(p-1)u^p - pru^{p-1} + 1},
    \end{equation} for all $(u, r) \in K$, and we will shortly show that the right-hand side of \eqref{eq::def F_p} is well defined over $K$, i.e., that the denominator of \eqref{eq::def F_p} is non-zero for all $(u, r) \in K$. If $y \neq 0$, then 
    \begin{equation}
        \label{eq::excluding y = 0}
        \frac{D_{\norm{\cdot}_2^p}(x, y)}{ D_{\norm{\cdot}_2^p}(y, x)} = F_p\left(\frac{\norm{x}_2}{\norm{y}_2}, r_{x, y}\right).
    \end{equation} Note that since $\norm{\cdot}_2^p$ is strictly convex, $D_{\norm{\cdot}_2^p}(y, x) > 0$ and the denominator in \eqref{eq::def F_p} is positive. Since $x = y$ if and only if $\norm{x}_2/\norm{y}_2 = 1$ and $r_{x, y} = 1$, together with $\{1\} \times \{1\} \notin K$, we get that $F_p$ is well defined. Furthermore, $F_p$ is continuous and positive. Note that, conversely, since $n \geq 2$, we can for any $u \in \R_+$ and any $r \in [-1, 1]$ find some $x, y \in \R^n$ such that $y \neq 0$, $u = \norm{x}_2/\norm{y}_2$, and $\langle x, y \rangle = \norm{x}_2 \norm{y}_2 r$.  Combining this statement, with \eqref{eq::including y = 0}, \eqref{eq::def F_p}, and \eqref{eq::excluding y = 0}, we can now write \eqref{alpha H exp1} as
    \begin{equation} \label{alpha H exp2}
        \alpha(\norm{\cdot}_2^p) = \min\left\{\frac{1}{p-1}, \inf_{(u, r) \in K}\{ F_p(u, r)\}\right\}.
    \end{equation} For any $(u, r) \in K$ with $u > 0$, it follows from \eqref{eq::def F_p} that 
    \begin{equation}\label{F_p fund}F_p(u^{-1}, r) = F_p(u, r)^{-1}.\end{equation} If we combine \eqref{F_p fund} with the fact that $F_p$ is continuous and that $\lim_{u \downarrow 0} F_p(u, r) = p-1$ for all $r \in [-1, 1]$, we can simplify \eqref{alpha H exp2} to get
    \begin{equation} \label{alpha H exp3}
        \alpha(\norm{\cdot}_2^p) = \inf_{(u, r) \in K}\{F_p(u, r)\}.
    \end{equation} Partition $K = K_0 \cup K_1$ where $K_0 = ([0, 1] \times [-1, 1]) \setminus (\{1\} \times \{1\})$ and $K_1 = K \setminus K_0$. Then, combining the fact that $F_p$ is continuous with \eqref{F_p fund} and \eqref{alpha H exp3} gives that
    \begin{align} \label{alpha H exp4}
        \alpha(\norm{\cdot}_2^p) &= \min \left\{\inf_{(u, r) \in K_0} \{F_p(u, r)\}, \inf_{(u, r) \in K_1}\{ F_p(u, r)\} \right\} \nonumber \\
        &= \min \left\{\inf_{(u, r) \in K_0}\{ F_p(u, r)\}, \inf_{(u, r) \in K_0} \left\{F_p(u, r)^{-1}\right\} \right\} \nonumber \\
        &= \min \left\{\inf_{(u, r) \in K_0} \{F_p(u, r)\}, \left(\sup_{(u, r) \in K_0} \{F_p(u, r)\}\right)^{-1} \right\}.
    \end{align} 
    To further simplify \eqref{alpha H exp4}, we will now show that 
    \begin{equation} \label{subclaim}
        F_p(u, r) \geq F_p(u, s) \text{ holds for all } u \in (0, 1) \text{ and all } -1 \leq r \leq s \leq 1. 
    \end{equation} To that end, suppose $-1 \leq r < s \leq 1$. Define the functions $\phi, \psi : (0, 1) \to \R$ by $\phi(u) \coloneqq u^p + p - 1$ and $\psi(u) \coloneqq (p-1)u^p + 1$ for all $u \in (0, 1)$. Then, 
    \begin{align*}
        F_p(u, r) - F_p(u, s) &= \frac{\phi(u) - pru}{\psi(u) - pru^{p-1}} - \frac{\phi(u) - psu}{\psi(u) - psu^{p-1}} \\ 
        &=\frac{(\phi(u) - pru)(\psi(u) - psu^{p-1}) - (\phi(u) - psu)(\psi(u) - pru^{p-1})}{(\psi(u) - pru^{p-1})(\psi(u) - psu^{p-1})}\\
        &=  \underbrace{\frac{p(s-r)u}{(\psi(u) - pru^{p-1})(\psi(u) - psu^{p-1})}}_{>0}(\psi(u) - \phi(u)u^{p-2})  
    \end{align*} holds for all $u \in (0, 1)$. Therefore, \eqref{subclaim} is implied by:
    \begin{equation}
        \psi(u) - \phi(u)u^{p-2} \geq 0  \text{ holds for all } u \in (0, 1).\label{eq::dependence gone}
    \end{equation}
    Define the functions $G_{1,p}, G_{2,p} : (0, 1) \to \R$ by 
    \begin{align}
        G_{1,p}(u) &\coloneqq \psi(u) - \phi(u)u^{p-2}, \label{eq:: g1 definition}\\
        G_{2,p}(u) &\coloneqq -2u^p + pu^2 - (p-2), \label{eq:: g2 definition}
    \end{align} for all $u \in (0, 1)$. It is immediate to verify that that $G_{1,p}$ and $G_{2,p}$ are differentiable, and that
    \begin{align}
        G_{1,p}'(u) &= (p-1)u^{p-3}G_{2,p}(u), \label{eq:: verify 2} \\
        G_{2,p}'(u) &=2pu(1 - u^{p-2}), \label{eq:: verify 3} \\
        \lim_{u \downarrow 0} G_{1,p}(u) = 1, &\quad \lim_{u \uparrow 1} G_{1,p}(u) = 0, \label{eq:: verify 4} \\
         \lim_{u \downarrow 0} G_{2,p}(u) = -(p-2) < 0, &\quad \lim_{u \uparrow 1} G_{2,p}(u) = 0,\label{eq:: verify 5}
    \end{align} 
    hold for all $u \in (0, 1)$. By \eqref{eq:: verify 3}, $G_{2,p}'(u) \neq 0$ for all $u \in (0, 1)$. Therefore, by \eqref{eq:: verify 5} and the mean value theorem applied to $G_{2,p}$, we have that $G_{2,p}(u) < 0$ for all $u \in (0, 1)$. Therefore, by \eqref{eq:: verify 2}, \eqref{eq:: verify 4} and the mean value theorem applied to $G_{1,p}$, we have that $G_{1,p}(u) > 0$ for all $u \in (0, 1)$. Therefore, \eqref{eq::dependence gone} holds, i.e., \eqref{subclaim} holds. 
    
    Now let us return to simplifying \eqref{alpha H exp4}. Note that $F_p(0, r) = p-1$ for all $r \in [-1, 1]$ and $F_p(1, r) = 1$ for all $r \in [-1, 1)$. By \cref{prop::perfect_symmetry}, we have that $p-1 \geq 1 > \alpha(\norm{\cdot}_2^p)$. Combining this with \eqref{subclaim} and that $F_p$ is continuous on $K_0$, we can simplify \eqref{alpha H exp4} into
    \begin{equation} \label{alpha H exp5}
        \alpha(\norm{\cdot}_2^p) = \min \left\{\inf_{u \in [0, 1)} \{F_p(u, 1)\}, \left(\sup_{u \in [0, 1]} \{F_p(u, -1)\}\right)^{-1} \right\}.
    \end{equation} 
    The function $f_p$ from \eqref{definition of f_p} satisfies \begin{equation}\label{eq::F_p reduction to f_p} f_p(u) = \begin{cases}
F_p(|u|, 1) &\text{ if } u \in [0, 1),\\
F_p(|u|, -1) &\text{ if } u \in [-1, 0).
\end{cases}\end{equation} Since $f_p(-1) = f_p(1) = 1$, \itemcref{technical lemma}{technical lemma i} implies that $f_p(u) \geq 1$ for all $u \in [-1, 1]$ and so by the quasiconcavity of $f_p$ on $[-1, 1]$ we get that \begin{equation}\label{eq::simplifying symmetry F_p}\inf_{u \in [0, 1)} \{F_p(u, 1)\} = 1.\end{equation} Therefore, \eqref{alpha H exp5} simplifies to 
    \begin{align*} 
        \alpha(\norm{\cdot}_2^p) &=  \left(\sup_{u \in [0, 1]} \{F_p(u, -1)\}\right)^{-1} \\
        &= \left(\sup_{u \in [-1, 0]} \{F_p(|u|, -1)\}\right)^{-1} \\
        &= \left(\sup_{u \in [-1, 0]} \{f_p(u)\}\right)^{-1} 
        = \alpha(|\cdot|^p),
    \end{align*} where the first equality follows from \eqref{alpha H exp5} and \eqref{eq::simplifying symmetry F_p}, the third equality follows from \eqref{eq::F_p reduction to f_p}, and the last equality follows from \itemcref{technical lemma}{technical lemma i} and \cref{corollary to lemmas}.
\end{proof}

The following \cref{thm::last theorem} generalizes \cref{prp::answering-proposition to question} into this high-dimensional setting, as referenced in \labelcref{item::C_5}.

\begin{thm} \label{thm::last theorem}
    Let $1 < p_1 < p_2 \leq \dots \leq p_{m-1} < p_m$ and let $\lambda_i \in \R_{++}$ and $r_i \in \{2, p_i \}$ for each $i \in \{1, \dots, m\}$. Let $n$ be a positive integer and consider the function $\sum_{i = 1}^m\lambda_i\norm{\cdot}_{r_i}^{p_i} : \R^n \to \R$. Then, $$\alpha\left(\sum_{i = 1}^m\lambda_i\norm{\cdot}_{r_i}^{p_i}\right) = \min\{\alpha(|\cdot|^{p_1}), \alpha(|\cdot|^{p_m})\}.$$
\end{thm}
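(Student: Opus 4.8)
The plan is to combine the general sandwich bound of \cref{Summation of Positively Hom} with the dimension-reduction results \cref{p norm simple} and \cref{p norm hard}, exactly as was done in \cref{prp::answering-proposition to question} for the scalar case. First I would note that each summand $\lambda_i \norm{\cdot}_{r_i}^{p_i}$ is a Legendre function on $\R^n$ that is positively homogeneous of degree $p_i$ (it is a positive multiple of a norm raised to the power $p_i$, hence convex, differentiable away from the origin, strictly convex, and cofinite, so Legendre; and the scaling $x\mapsto\lambda x$ multiplies its value by $\lambda^{p_i}$). Since the degrees satisfy $p_1<p_2\le\cdots\le p_{m-1}<p_m$, \cref{Summation of Positively Hom} applies and yields
\begin{equation*}
\alpha\Bigl(\sum_{i=1}^m\lambda_i\norm{\cdot}_{r_i}^{p_i}\Bigr)\in\Bigl[\min_{i}\{\alpha(\lambda_i\norm{\cdot}_{r_i}^{p_i})\},\ \min\{\alpha(\lambda_1\norm{\cdot}_{r_1}^{p_1}),\alpha(\lambda_m\norm{\cdot}_{r_m}^{p_m})\}\Bigr].
\end{equation*}

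Next I would identify every $\alpha(\lambda_i\norm{\cdot}_{r_i}^{p_i})$ with $\alpha(|\cdot|^{p_i})$. By \cref{basic proposition} the positive scalar $\lambda_i$ is irrelevant, so $\alpha(\lambda_i\norm{\cdot}_{r_i}^{p_i})=\alpha(\norm{\cdot}_{r_i}^{p_i})$. Now split on the two allowed values of $r_i$: if $r_i=p_i$, then \cref{p norm simple} gives $\alpha(\norm{\cdot}_{p_i}^{p_i})=\alpha(|\cdot|^{p_i})$; if $r_i=2$, then \cref{p norm hard} gives $\alpha(\norm{\cdot}_{2}^{p_i})=\alpha(|\cdot|^{p_i})$ (and for $n=1$ both norms coincide with $|\cdot|$ anyway, so the claim is trivial there). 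In all cases $\alpha(\lambda_i\norm{\cdot}_{r_i}^{p_i})=\alpha(|\cdot|^{p_i})$, so the displayed interval becomes $[\min_i\{\alpha(|\cdot|^{p_i})\},\ \min\{\alpha(|\cdot|^{p_1}),\alpha(|\cdot|^{p_m})\}]$.

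Finally I would collapse this interval to a point using the monotonicity of $p\mapsto\alpha(|\cdot|^p)$. By \cref{Monotonically Decreasing Symmetry Coefficient}, $\alpha(|\cdot|^p)$ is strictly increasing on $(1,2)$ and strictly decreasing on $(2,\infty)$ (with value $1$ at $p=2$); hence for any finite set of exponents, the minimum of $\alpha(|\cdot|^{p_i})$ over all $i$ is attained at one of the two extreme exponents $p_1$ or $p_m$ — any intermediate $p_i$ lies between $p_1$ and $p_m$, so $\alpha(|\cdot|^{p_i})\ge\min\{\alpha(|\cdot|^{p_1}),\alpha(|\cdot|^{p_m})\}$ (one checks the three cases according to where $2$ falls relative to $p_1$ and $p_m$, using that $\alpha$ rises to $1$ at $2$ and falls thereafter, so the smallest value among $\{\alpha(|\cdot|^{p_i})\}$ cannot be strictly interior). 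Therefore $\min_i\{\alpha(|\cdot|^{p_i})\}=\min\{\alpha(|\cdot|^{p_1}),\alpha(|\cdot|^{p_m})\}$, the lower and upper ends of the interval coincide, and the claimed equality follows. The only step requiring genuine work is the verification that $\lambda_i\norm{\cdot}_{r_i}^{p_i}$ is Legendre and positively homogeneous of degree $p_i$ so that \cref{Summation of Positively Hom} is applicable; everything else is bookkeeping with already-established lemmas, and I expect no real obstacle beyond citing the right prior result for each $r_i\in\{2,p_i\}$.
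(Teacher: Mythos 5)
Your proposal is correct and follows essentially the same route as the paper: apply \cref{Summation of Positively Hom} to the positively homogeneous summands, strip the scalars via \cref{basic proposition}, reduce each $\alpha(\norm{\cdot}_{r_i}^{p_i})$ to $\alpha(|\cdot|^{p_i})$ via \cref{p norm simple} and \cref{p norm hard}, and collapse the resulting interval using \cref{Monotonically Decreasing Symmetry Coefficient}. No gaps to report.
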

\begin{proof}
    First, note that the function $\sum_{i = 1}^m\lambda_i\norm{\cdot}_{r_i}^{p_i} : \R^n \to \R$ is Legendre. We have that \begin{align*}
        \alpha\left(\sum_{i = 1}^m\lambda_i\norm{\cdot}_{r_i}^{p_i}\right) &\in \left[\min_{i \in\{1, \dots, m\}} \left\{ \alpha\left(\lambda_i \norm{\cdot}_{r_i}^{p_i}\right)\right\}, \ \min\left\{ \alpha\left(\lambda_1  \norm{\cdot}_{r_1}^{p_1}\right), \alpha\left(\lambda_m  \norm{\cdot}_{r_m}^{p_m}\right)\right\}\right]  \\
        &= \left[\min_{i \in\{1, \dots, m\}} \left\{ \alpha\left( \norm{\cdot}_{r_i}^{p_i}\right)\right\}, \ \min\left\{ \alpha\left(  \norm{\cdot}_{r_1}^{p_1}\right), \alpha\left(\norm{\cdot}_{r_m}^{p_m}\right)\right\}\right] \\
        &= \left[\min_{i \in\{1, \dots, m\}} \left\{ \alpha\left( |\cdot|^{p_i}\right)\right\}, \ \min\left\{ \alpha\left(  |\cdot|^{p_1}\right), \alpha\left(|\cdot|^{p_m}\right)\right\}\right] \\
        &=  \biggl\{\min\{ \alpha(|\cdot|^{p_1}),\alpha(|\cdot|^{p_m})\}\biggr\},
    \end{align*} where the inclusion follows from \cref{Summation of Positively Hom} and since $\lambda_i \norm{\cdot}_{q_i}^{p_i}$ is positively homogeneous of degree $p_i$ for all $i \in \{1, \dots, m\}$, the first equality follows from \cref{basic proposition}, the second equality from \cref{p norm simple} and \cref{p norm hard}, and the last equality from \cref{Monotonically Decreasing Symmetry Coefficient}.
\end{proof}

We conclude this section with a result analogous to the asymptotic result in \cref{thm:assymptotic theorem}.

\begin{cor} \label{corassymptotic theorem higher dim}
Let $n$ be a positive integer and consider the function $\norm{\cdot}_2^p : \R^n \to \R$. Then, 
\begin{enumerate}[label=(\roman*)]
    \item  $\alpha(\norm{\cdot}_2^p) = \alpha(\norm{\cdot}_p^p) > (2p)^{-1}$ holds for all $p \in (2, \infty)$. \label{itm:assymptotic lower bound cor}
    \item  $\alpha(\norm{\cdot}_2^p) = \alpha(\norm{\cdot}_p^p) \sim(2p)^{-1}$ as $p \to \infty$. \label{itm:assymptotic equivalence cor}
\end{enumerate}
\end{cor}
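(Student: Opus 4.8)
The plan is to obtain both parts as an immediate consequence of the one-dimensional analysis in \cref{thm:assymptotic theorem} together with the dimension-reduction results of this section. The first step is to record the chain of equalities
\[
\alpha\left(\norm{\cdot}_2^p\right) = \alpha\left(|\cdot|^p\right) = \alpha\left(\norm{\cdot}_p^p\right),
\]
valid for every $p > 1$ and every positive integer $n$: the left equality is exactly \cref{p norm hard} (with the trivial case $n = 1$ being a tautology), and the right equality is \cref{p norm simple}. This reduces everything to statements about $\alpha(|\cdot|^p)$.

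For \labelcref{itm:assymptotic lower bound cor}, I would then simply invoke \itemcref{thm:assymptotic theorem}{itm:assymptotic lower bound}, which gives $\alpha(|\cdot|^p) > (2p)^{-1}$ for all $p \in (2, \infty)$; combined with the displayed equalities this yields $\alpha(\norm{\cdot}_2^p) = \alpha(\norm{\cdot}_p^p) > (2p)^{-1}$. For \labelcref{itm:assymptotic equivalence cor}, I would invoke \itemcref{thm:assymptotic theorem}{itm:assymptotic equivalence}, i.e.\ $\alpha(|\cdot|^p) \sim (2p)^{-1}$ as $p \to \infty$; since $\sim$ is an equivalence relation and the quantities agree exactly, $\alpha(\norm{\cdot}_2^p) = \alpha(\norm{\cdot}_p^p) \sim (2p)^{-1}$ as $p \to \infty$.

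There is essentially no obstacle here: all the substantive work has already been carried out. The dimension independence — the genuinely nontrivial input — is entirely encapsulated in \cref{p norm hard}, whose proof exploits the Cauchy--Schwarz reduction to the two extreme correlation values $r = \pm 1$ and the quasiconcavity of $f_p$ on $[-1,1]$ from \itemcref{technical lemma}{technical lemma i}; and the asymptotic content is entirely in \cref{thm:assymptotic theorem}, whose proof analyzes the auxiliary functions $g_{1,p,c}$, $g_{2,p,c}$ and the limiting behavior of their unique stationary point $u_{p,c}$. The corollary itself is just the assembly of these pieces, so the proof will be only a few lines.
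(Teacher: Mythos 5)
Your proposal is correct and follows exactly the paper's own argument: the paper's proof is precisely the combination of \cref{p norm simple}, \cref{p norm hard}, and \cref{thm:assymptotic theorem}, which is what you assemble.
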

\begin{proof}
    Combine \cref{p norm simple}, \cref{p norm hard} and \cref{thm:assymptotic theorem}.
\end{proof}
\begin{appendices}
\section[]{Closed form expressions of $\alpha(|\cdot|^p)$ for even positive integers $p$} \label{sec:closed form}
In this section, we compute closed form expressions of $\alpha(|\cdot|)^p$ for some even positive integers $p$. Let $p \geq 4$ be an even integer and recall the polynomial $$g_p(u) \coloneqq u^{2(p-1)} - (p-1)^2 u^p - 2p(p-2)u^{p-1} - (p-1)^2 u^{p-2} + 1$$ over $[0, 1]$ as defined in \eqref{defintion of g_p}. Since $p$ is an integer, we can and will, in this section, extend $g_p$ as a polynomial over $\R$. By \itemcref{technical lemma}{technical lemma i}, finding a closed form expression of $\alpha(|\cdot|^p)$ is equivalent to finding a closed form expression of the unique root of $g_p$ in the interval $[0, 1]$. 
Despite the polynomial being of degree $2(p-1)$, \cref{prop:appendix 1} and \cref{prop:appendix 2} reveal that finding this root is tractable not only for $p=4$, but also for $p \in \{6, 8, 10\}$.

Note that $g_p$ is a \textit{palindromic} polynomial, i.e., its sequence of coefficients $$(1, 0, \dots, 0, -(p-1)^2, -2p(p-2), -(p-1)^2, 0, \dots, 0, 1)$$ forms a palindrome. This observation allows us to make use of \emph{tetrahedral numbers} in our results, where the $n$th tetrahedral number is defined as
$$Te_{n} \coloneqq \sum_{k = 1}^n T_k,$$
and $T_k$ is the $k$th \emph{triangular number}: 
$$T_k \coloneqq \frac{k(k+1)}{2}.$$
For $n \leq 0$, we define $Te_{n} = T_n = 0$ and we note that the following identities hold for all integers $n \geq 1$:
\begin{align}
    T_n &= T_{n-1} + n,  \label{eq:triangular identity} \\
    Te_n &= Te_{n-1} + T_n \label{eq:tetrahedral identity}.
\end{align} 

\begin{prp} \label{prop:appendix 1}
    Let $p \geq 4$ be a positive even integer, then
    \begin{equation}\label{eq:appendix equation 1}
        g_p(u) = (u+1)^4\sum_{k = 0}^{2(p-3)}Te_{\min\{k, 2(p-3)-k\}+1}(-1)^ku^k
    \end{equation} holds for all $u \in \R$.
\end{prp}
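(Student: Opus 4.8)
The plan is to prove the identity by matching coefficients of powers of $u$ on the two sides. Write the claimed right-hand side as $(u+1)^4 q_p(u)$, where $q_p(u) := \sum_{k=0}^{2p-6}(-1)^k c_k\,u^k$ with $c_k := Te_{\min\{k,\,2p-6-k\}+1}$ for $0\le k\le 2p-6$ and $c_k:=0$ otherwise; thus $(c_k)_{k=0}^{2p-6}$ is the symmetric tent sequence $Te_1,Te_2,\dots,Te_{p-2},\dots,Te_2,Te_1$ with its single peak $Te_{p-2}$ at $k=p-3$. The first step is to observe that both $g_p$ and $(u+1)^4 q_p$ are self-reciprocal (palindromic) polynomials of degree $2p-2$: for $g_p$ this is the remark immediately preceding the statement; $q_p$ is self-reciprocal of degree $2p-6$ since $c_k=c_{2p-6-k}$ and $2p-6$ is even, and $(u+1)^4$ is self-reciprocal of degree $4$. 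Because a self-reciprocal polynomial of degree $2p-2$ is determined by its coefficients of $u^0,\dots,u^{p-1}$, it suffices to match those $p$ coefficients. Expanding the product, the coefficient of $u^j$ on the right-hand side equals
\[
    (-1)^j\sum_{i=0}^{4}(-1)^i\binom{4}{i}c_{j-i},
\]
that is, $(-1)^j$ times a fourth-order finite difference of $(c_k)$.

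I would then partition $\{0,1,\dots,p-1\}$ into four regimes. For $j=0$ the coefficient is simply $c_0=Te_1=1=[u^0]g_p$. For $1\le j\le p-3$, every index $j-i$ with $0\le i\le 4$ lies on the ascending branch of the tent, so $c_{j-i}=Te_{(j-i)+1}=\binom{(j-i)+3}{3}$, the negative indices that can occur contributing $0$ both under this formula and under the convention $Te_n=0$ for $n\le 0$ (the single place those two conventions disagree is $Te_{-3}$, which is reached only when $j=0$). Since $n\mapsto\binom{n+3}{3}$ is a cubic polynomial, its fourth finite difference vanishes, so the coefficient of $u^j$ is $0$, matching $[u^j]g_p=0$ because $j\notin\{0,p-2,p-1,p,2p-2\}$. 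What remains are the two central coefficients, $j=p-2$ and $j=p-1$, which are precisely the ones whose $5$-term window straddles the peak of the tent.

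The crux is these two cases. Substituting $c_{p-2}=Te_{p-3}$, $c_{p-3}=Te_{p-2}$, $c_{p-4}=Te_{p-3}$, $c_{p-5}=Te_{p-4}$, $c_{p-6}=Te_{p-5}$ (and the list shifted by one for $j=p-1$), the right-hand coefficient of $u^{p-2}$ collapses to $7\,Te_{p-3}-4\,Te_{p-2}-4\,Te_{p-4}+Te_{p-5}$ and that of $u^{p-1}$ to $-2\,Te_{p-4}+8\,Te_{p-3}-6\,Te_{p-2}$. Using the recursions \eqref{eq:triangular identity} and \eqref{eq:tetrahedral identity} to reduce these tetrahedral combinations first to triangular numbers and then to explicit polynomials in $p$ (equivalently, substituting $Te_n=\tfrac{n(n+1)(n+2)}{6}$), one checks that they equal $-(p-1)^2$ and $-2p(p-2)$ respectively, i.e. $[u^{p-2}]g_p$ and $[u^{p-1}]g_p$. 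Palindromic symmetry then forces the coefficients of $u^p$ and $u^{2p-2}$ to agree automatically, completing the proof. The only genuinely computational step is this simplification at the peak; everything else is the bookkeeping of where the tent sequence is affine in $Te$ and where it bends. (Alternatively, one can read off the four central coefficients in one stroke from the generating-function identity $\sum_{k\ge0}Te_{k+1}x^k=(1-x)^{-4}$, by writing $q_p$ as a truncation of $(1+u)^{-4}$ plus $u^{2p-6}$ times the matching truncation in $1/u$; this again reduces to the same polynomial identities in $p$.)
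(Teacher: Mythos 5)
Your proposal is correct and follows essentially the same route as the paper's proof: expand the product, observe that the coefficient of $u^j$ is a signed fourth finite difference of the tetrahedral ``tent'' sequence (your vanishing-fourth-difference-of-a-cubic argument is exactly the paper's identity $Te_n-4Te_{n-1}+6Te_{n-2}-4Te_{n-3}+Te_{n-4}=0$ for $n\neq 1$), evaluate the two central coefficients directly to get $-(p-1)^2$ and $-2p(p-2)$, and finish by palindromic symmetry. The only cosmetic difference is that you flag the boundary convention at negative indices explicitly, which the paper handles implicitly.
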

\begin{proof}
Fix $p \geq 4$. Let 
$$(u+1)^4\sum_{k = 0}^{2(p-3)}Te_{\min\{k, 2(p-3)-k\}+1}(-1)^ku^k = \sum_{i = 0}^{2(p-1)}c_i u^i $$ 
hold for all $u \in \R$ and some coefficients $c_i \in \R$. First, we note that $c_0 = 1$. For any $i \in \{1, \dots, p-3\}$, we get that 
$$c_i = (-1)^i(Te_{i+1} - 4 Te_{i} + 6Te_{i-1} - 4Te_{i-2} + Te_{i-3}),$$ 
which after an evaluation gives that $c_i = 0$ for $i \in \{1, \dots, p-3\}$, since in general  $$Te_{n} - 4 Te_{n-1} + 6Te_{n-2} - 4Te_{n-3} + Te_{n-4} = 0$$ holds for any $n \in \mathbb{Z} \setminus \{1\}$. Moreover, it is straightforward to verify that \begin{equation*}
    c_{p-2} = (-1)^{p-2}(Te_{p-3} - 4Te_{p-2} + 6 Te_{p-3} - 4Te_{p-4} + Te_{p-5}) = -(p-1)^2 \\
\end{equation*} and \begin{equation*}
    c_{p-1} = (-1)^{p-1}(Te_{p-4} - 4Te_{p-3} + 6 Te_{p-2} - 4Te_{p-3} + Te_{p-4}) = -2p(p-2),\\
\end{equation*} 
where we here used twice that $p$ is even. Since the right hand side of \eqref{eq:appendix equation 1} is a product of two palindromic polynomials, we have that $\sum_{i = 0}^{2(p-1)}c_i u^i$ is also a palindromic polynomial, i.e., $c_i = c_{2(p-1)-i}$ must hold for all $i \in \{0, \dots, 2(p-1)\}$, and so \eqref{eq:appendix equation 1} holds.
\end{proof}

It will at this point be convenient to define, for any even integer $p \geq 4$, the palindromic polynomials $$h_{1, p}(u) \coloneqq \sum_{k = 0}^{p-4} T_{\min\{k, p-4-k\} + 1} (-1)^k u^k$$ and $$h_{2, p}(u) \coloneqq (-1)^{\frac{p}{2}-1}(p-1)u^{\frac{p}{2}-1} + \sum_{i = 0}^{p-2}(-1)^iu^i$$ for all $u \in \R$.

\begin{prp}\label{prop:appendix 2}
    Let $p \geq 4$ be a positive even integer, then
    \begin{equation}\label{eq:appendix equation 2}
        h_{1, p}(u) h_{2, p}(u) = \sum_{k = 0}^{2(p-3)}Te_{\min\{k, 2(p-3)-k\}+1}(-1)^ku^k
    \end{equation} holds for all $u \in \R$.
\end{prp}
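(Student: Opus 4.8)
The plan is to prove \eqref{eq:appendix equation 2} by comparing coefficients, after first removing the alternating signs. Composing with the bijection $u \mapsto -u$ preserves polynomial identities, so \eqref{eq:appendix equation 2} is equivalent to the identity obtained by replacing each $u^j$ by $(-u)^j$. Writing $N \coloneqq p - 4$ (a nonnegative even integer, with $p - 1 = N+3$, $p - 2 = N+2$, and $\tfrac{p}{2} - 1 = \tfrac{N}{2} + 1$) and using the variable $v$, this turns all three polynomials into ones with nonnegative coefficients, namely $H_1(v) \coloneqq \sum_{k=0}^{N} T_{\min\{k,N-k\}+1}\,v^k$, $H_2(v) \coloneqq \sum_{i=0}^{N+2} v^i + (N+3)\,v^{N/2+1}$, and $R(v) \coloneqq \sum_{k=0}^{2N+2} Te_{\min\{k,2N+2-k\}+1}\,v^k$; the goal becomes $H_1 H_2 = R$.

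First I would observe that $H_1$, $H_2$, and $R$ are palindromic, of degrees $N$, $N+2$, and $2N+2$, so that $H_1 H_2$ is palindromic of degree $2N+2$, matching $R$; hence it suffices to verify $[v^k](H_1 H_2) = Te_{k+1}$ for $0 \le k \le N+1$. Writing $a_j$ for the coefficients of $H_1$ (so $a_j = T_{j+1}$ for $0 \le j \le N/2$, $a_j = T_{N-j+1}$ for $N/2 \le j \le N$, and $a_j = 0$ otherwise) and reading off the coefficients of $H_2$, one obtains, for $0 \le k \le N+1$,
$$[v^k](H_1 H_2) \;=\; \sum_{j=0}^{\min\{k,N\}} a_j \;+\; (N+3)\,a_{k - N/2 - 1},$$
with the convention $a_j \coloneqq 0$ for $j \notin \{0,\dots,N\}$; the upper terms of $H_2$ contribute nothing here because $k - N - 2 < 0$. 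For $0 \le k \le N/2$ the second term vanishes and the identity is just $\sum_{j=0}^{k} T_{j+1} = Te_{k+1}$, a telescoped form of \eqref{eq:tetrahedral identity}.

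The substantive case is $N/2 < k \le N+1$. Summing the pyramid $(a_j)$ and using $a_{k-N/2-1} = T_{k-N/2}$, the required equality becomes, with $n \coloneqq N/2$ and $t \coloneqq k - n \in \{1,\dots,n+1\}$,
$$Te_{n+t+1} \;=\; Te_{n+1} + Te_n - Te_{n-t} + (2n+3)\,T_t .$$
I would prove this by induction on $t$. The base case $t = 1$, after applying \eqref{eq:tetrahedral identity}, reduces to $T_{n+2} = T_n + 2n+3$, which is \eqref{eq:triangular identity} applied twice. For the inductive step, subtracting the identity at $t$ from the one at $t+1$ and using \eqref{eq:triangular identity} and \eqref{eq:tetrahedral identity} (valid also for nonpositive indices, by the conventions $Te_m = T_m = 0$) reduces the step to $T_{n+t+2} = T_{n-t} + (2n+3)(t+1)$, i.e., to the statement that the sum of the $2(t+1)$ consecutive integers running from $n-t+1$ to $n+t+2$, whose mean is $(2n+3)/2$, equals $(t+1)(2n+3)$. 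This closes the induction, hence the coefficient comparison, hence \eqref{eq:appendix equation 2}.

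I expect the only real obstacle to be pinning down the displayed formula for $[v^k](H_1 H_2)$: this needs care about which branch of the palindromic pyramid $(a_j)$ feeds into the coefficient of $v^k$ and about where the extra term $(N+3)\,v^{N/2+1}$ of $H_2$ falls. Once that formula is established, the rest reduces entirely to the two elementary recursions \eqref{eq:triangular identity} and \eqref{eq:tetrahedral identity}.
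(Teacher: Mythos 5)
Your proposal is correct and follows essentially the same route as the paper's proof: compare coefficients of the product, compute the coefficients $c_k$ for $k\le p/2-2$ directly, handle the middle band $k\in\{p/2-1,\dots,p-3\}$ by an induction driven by \eqref{eq:triangular identity} and \eqref{eq:tetrahedral identity}, and finish using palindromicity of the product to cover the upper half. The sign-stripping substitution $u\mapsto -u$ and the reindexed induction (via the closed-form tetrahedral identity in $t=k-N/2$) are only cosmetic differences from the paper's argument.
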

\begin{proof}
    Suppose that $\sum_{i = 0}^{2(p-3)}c_i u^i \coloneqq h_{1, p}(u) h_{2, p}(u)$ holds for all $u \in \R$ and some coefficients $c_i \in \R$. We have that \begin{align}
        h_{1, p}(u)h_{2, p}(u) &= \sum_{i = 0}^{p-2}\sum_{j = 0}^{p-4}(-1)^{i+j}T_{\min\{j, p-4-j\}+1}u^{i+j} \label{eq:first one}\\
        &\ + (p-1)\sum_{j = 0}^{p-4}(-1)^{\tfrac{p}{2}-1+j}T_{\min\{j, p-4-j\}+1} u^{\tfrac{p}{2}-1 + j}. \label{eq:second one}
    \end{align} 
    Let $k \in \{0, \dots, \tfrac{p}{2}-2\}$. Then \eqref{eq:first one}, but not \eqref{eq:second one}, will contribute to $c_k$. Since, $\min\{k, p-4-k\} = k$, we get that
    $$c_k = (-1)^k\sum_{j = 0}^k T_{j+1} = (-1)^k Te_{k+1}.$$

    Let us now cover the case when $k \in \{\tfrac{p}{2}-1, \dots, p-3\}$, using induction over $k$. Let us fix $k = \tfrac{p}{2}-1$. When expanding, there will be contributions from both the right hand side of \eqref{eq:first one} and \eqref{eq:second one} to $c_k$. From \eqref{eq:first one} we get the terms \begin{equation}\label{eq:lasteq}
        (-1)^k\left(\sum_{j = 0}^{\tfrac{p}{2}-2} T_{j+1} + T_{p-4-k+1} \right)
    \end{equation} and from \eqref{eq:second one} we get $(-1)^k(p-1)T_1$, which when added to \eqref{eq:lasteq} yield
    \begin{align*}
        c_{k} &= (-1)^k\left( \sum_{j = 0}^{\tfrac{p}{2}-2} T_{j+1} + T_{\tfrac{p}{2} - 2} + (p-1) T_1\right) \\
        &= (-1)^k\left( Te_{\tfrac{p}{2}-1} + T_{\tfrac{p}{2} - 1} - (\tfrac{p}{2}-1) + (p-1)\right) \\
        &= (-1)^k\left( Te_{\tfrac{p}{2}-1} + T_{\tfrac{p}{2} - 1} + \tfrac{p}{2}\right) \\
        &= (-1)^kTe_{k+1},
    \end{align*} where the second and final equalities use \eqref{eq:triangular identity} and \eqref{eq:tetrahedral identity}.

    Let us now suppose that the induction hypothesis $c_{k-1} = (-1)^{k-1}Te_k$ holds for some $k \in \{\tfrac{p}{2}, \dots, p-3\}$. We will now show that $c_{k} = (-1)^{k}Te_{k+1}$. Similarly expanding the right hand side of \eqref{eq:first one} and \eqref{eq:second one} we get
    \begin{align*}
        c_k &= (-1)^k\left( \sum_{j = 0}^{\tfrac{p}{2}-2} T_{j+1} + \sum_{j = \tfrac{p}{2}-1}^k T_{p-3-j} + (p-1)T_{k-\left(\tfrac{p}{2}-1 \right) + 1} \right) \\
        &=(-1)^k\left( \sum_{j = 0}^{\tfrac{p}{2}-2} T_{j+1} + \sum_{j = \tfrac{p}{2}-1}^{k-1} T_{p-3-j} + T_{p-3-k} + (p-1)T_{k-\tfrac{p}{2} + 2} \right) \\
        &=(-1)^k\left(  \sum_{j = 0}^{\tfrac{p}{2}-2} T_{j+1} + \sum_{j = \tfrac{p}{2}-1}^{k-1} T_{p-3-j} + T_{p-3-k} + (p-1)\left(T_{(k-1) - \tfrac{p}{2} + 2} + k -\tfrac{p}{2} + 2 \right)\right) \\
        &=(-1)^k\left(Te_k + T_{p-3-k} + (p-1)\left(k - \tfrac{p}{2} + 2\right)\right), 
    \end{align*} where we in the third equality use \eqref{eq:triangular identity} and in the last equality we use the induction hypothesis:
    \begin{align*}
        c_{k-1} &= (-1)^{k-1}Te_k \\
        \implies Te_k &=  \sum_{j = 0}^{\tfrac{p}{2}-2} T_{j+1} + \sum_{j = \tfrac{p}{2}-1}^{k-1} T_{p-3-j} +  (p-1)T_{(k-1) - (\tfrac{p}{2}-1) + 1}.
    \end{align*}
    
    Finally, \begin{align*}
        T_{p-3-k} + (p-1)(k-\tfrac{p}{2} + 3) &= \frac{(p-3-k)(p-2-k)}{2} + \frac{(p-1)(2k - p + 4)}{2} \\
        &= \frac{(k+1)(k+2)}{2} = T_{k+1}
    \end{align*} and so $c_k = (-1)^k\left(Te_k + T_{k+1}\right) = (-1)^kTe_{k+1}.$ By induction, $c_k = (-1)^kTe_{k+1}$ holds for all $k \in \{\tfrac{p}{2}-1, \dots, p-3\}$.
    
    Since both $h_{1, p}$ and $h_{2, p}$ are palindromic polynomials, the resulting polynomial $h_{1, p} h_{2, p}$ must also be palindromic. Moreover, the right hand side of \eqref{eq:appendix equation 2} is a palindromic polynomial, implying that the result that $c_k = (-1)^k Te_{k+1}$ for $k \in \{0, \dots, p-3\}$ along with the palindromic property shows the equality \eqref{eq:appendix equation 2}.
\end{proof}

Combining \cref{prop:appendix 1} and \cref{prop:appendix 2} we have reduced the problem of finding a closed form expression of $\alpha(|\cdot|^p)$, for even integers $p \geq 4$, to finding a root of the polynomials $h_{1, p}$ or $h_{2, p}$ with degrees $p-4$ and $p-2$, respectively. This problem may seem intractable when $p-2 \geq 5$, i.e., when $p \geq 8$ (since $p$ is an even integer). However, we will further explore the palindromic properties of $h_{1,p}$ and $h_{2,p}$, allowing us to handle the cases with $p = 8$ and $p=10$.

The study of palindromic polynomials, also called \textit{reciprocal} polynomials, has a rich history. See for instance \cite{durand1961}, where it was shown that for palindromic polynomials $h$ of even degree $d$, there exists a polynomial $\tilde{h}$ of degree $d/2$ such that $h(u) = u^{d/2} \tilde{h}(u + u^{-1})$ holds for all $u \neq 0$. Since both of our polynomials $h_{1, p}$ and $h_{2, p}$ are palindromic of even degrees, we apply this method to reduce the problem of finding a zero to $h_{1,p}$ and $h_{2,p}$ to the one of finding a zero of the polynomials $\tilde{h}_{1, p}$ and $\tilde{h}_{2, p}$ of degree $(p-4)/2$ and $(p-2)/2$, respectively. 
Since roots to polynomials up to degree 4 can be computed in closed form, the problem becomes tractable for $p = 8$ and $p = 10$. The case of $p=12$ seems to be intractable, since the positive real roots we seek are the real roots of $h_{2, 12}$, which has degree $5$.

\begin{exmp}

Based on these results, let us illustrate how to find $\alpha(|\cdot|^p)$ for $p = 8$. For each $u \neq 0$, we have
\begin{align*}
    h_{2, 8}(u) &= u^6 - u ^5 + u^4 - 8u^3 + u^2 - u + 1 \\
    &= u^3((u+u^{-1})^3 - (u+u^{-1})^2 - 2(u+u^{-1}) - 6),
\end{align*}
giving us the equation
$$\tilde{h}_{2, 8}(v) = v^3 - v^2 - 2v - 6 = 0,$$
which has the real and positive solution 
$$v_0 \coloneqq \frac{1}{3} \left(1 + \sqrt[3]{7 (13 - 9 \sqrt{2})} + \sqrt[3]{7 (13 + 9 \sqrt{2})}\right) \approx 2.63.$$ 
Since the range of the mapping $u \mapsto u+u^{-1}$, for $u \in (0, 1]$, equals $[2, \infty)$, there exists a solution $u_0 \in (0, 1]$ to the equation $u_0 + u_0^{-1} = v_0.$ Like in \cref{alg:compute symmetry for 1d norm}, we can then extract a closed form expression of $\alpha(|\cdot|^8)$ after evaluating $f_8(-u_0)^{-1}$. This leads us to the monstrous expression \begin{align}\alpha(|\cdot|^8) &=\frac{13996800 + 2239488\left(\beta_2 + \sqrt[3]{7 (13 - 9 \sqrt{2})} + \sqrt[3]{7 (13 + 9 \sqrt{2})}\right) + \beta_3^{8}}{1679616 + 48 \beta_3^{7} + 7 \beta_3^{8}} \approx 0.0982  \label{eq:alpha8}
\end{align} where \begin{align*} 
\beta_1 &\coloneqq 1 + \sqrt[3]{7} \left(\sqrt[3]{13 - 9 \sqrt{2}} + \sqrt[3]{13 + 9 \sqrt{2}}\right)\\
\beta_2 &\coloneqq \sqrt{ \beta_1^{2} -36} \nonumber \\
\beta_3 &\coloneqq \beta_1 + \beta_2 \nonumber. \end{align*}

\end{exmp}
\begin{exmp}
An analogous procedure lets us compute the closed form analytical value of $\alpha(|\cdot|^{10})$ as \begin{align}\alpha(|\cdot|^{10}) &=  \frac{201 + 23 \cdot 3^{\frac{2}{3}} + 126 \cdot \sqrt[3]{3} -\left( 135 + 50 \cdot 3^{\frac{2}{3}} + 45 \cdot \sqrt[3]{3} \right)\sqrt{3^{\frac{2}{3}} + 2 \cdot \sqrt[3]{3}-3}}{2 \left(3^{\frac{2}{3}} + 12\left(1 + \sqrt[3]{3}\right)\right)} \approx 0.0733 \label{eq:alpha10}
\end{align} and of $\alpha(|\cdot|^6)$ as presented in \cref{tab::special_cases_of_symmetry_coefficient}.
\end{exmp}
\begin{rem}
    The symmetry coefficient $\alpha(|\cdot|^3)$ can be evaluated in its closed form expression by finding the real positive roots of the $4$th degree polynomial given in \eqref{defintion of g_p}. On the other hand, an analogous method for finding closed form expressions for other \textit{odd} values $p$ does not seem straightforward, nor does it seem like the methods conveyed here can be extended to this setting.
\end{rem}
\begin{rem}
The second polynomial $h_{2, p}$ has been studied in literature, since for some even values of $p$, it equals a \textit{cyclotomic} polynomial, with its middle coefficient being \textit{perturbed}, see for example \cite{dresden2004middle,mossinghoff1998perturbing}. There might be some way to exploit the structure---unknown to the authors---in order to analytically determine the roots of $h_{2, p}$ for even higher values of even integers $p$. 
\end{rem}

\end{appendices}
\section*{Acknowledgments}
M. Nilsson and P. Giselsson acknowledge support from the ELLIIT Strategic Research Area and the Wallenberg AI, Autonomous Systems, and Software Program (WASP), funded by the Knut and Alice Wallenberg Foundation. Additionally, P. Giselsson acknowledges support from the Swedish Research Council.
\printbibliography
\end{document}